\DeclareMathOperator{\Gr}{Gr}
\newcommand{\IG}{\mathrm{IG}}
\newcommand{\OG}{\mathrm{OG}}
\newcommand{\QH}{\mathrm{QH}}
\DeclareMathOperator{\ev}{ev}
\newtheorem{thm}{Theorem}[section]
\newtheorem{lemma}[thm]{Lemma}
\newtheorem{prop}[thm]{Proposition}
\theoremstyle{definition} 
\newtheorem{defn}[thm]{Definition}
\theoremstyle{remark}
\newtheorem{remark}[thm]{Remark}
\newtheorem{example}[thm]{Example}
\begin{document}
\title{Minimum quantum degrees for Isotropic Grassmannians in types B and C}

\author{Ryan M. Shifler}
\address{
Department of Mathematical Sciences,
Henson Science Hall, 
Salisbury University,
Salisbury, MD 21801
}
\email{rmshifler@salisbury.edu}

\author{Camron Withrow}
\address{
Department of Mathematics,
McBryde Hall,
Virginia Tech,
Blacksburg, VA 24061
}
\email{cwithrow@vt.edu}

\begin{abstract}
We give a formula in terms of Young diagrams to calculate the minimum positive integer $d$ such that $q^d$ appears in the quantum product of two Schubert classes for the submaximal isotropic Grassmannians in types B and C. We do this by studying curve neighborhoods. We compute curve neighborhoods in several combinatorial models including $k$-strict partitions and a set of partitions where their inclusion is compatible with the Bruhat order.
\end{abstract}

\maketitle

\section{Introduction}\label{sect:intro}
Let $\Gr:=\Gr(k,n)$ be the Grassmannian, $\IG:=\IG(k,2n)$ be the symplectic Grassmannian, and $\OG:=\OG(k,2n+1)$ be the odd orthogonal Grassmannian. Let $\QH^*(X)$ be the quantum cohomology ring of $X \in \{ \Gr, \IG, \OG \}$. The purpose of this paper is to give a formula in terms of inclusions of Young diagrams to calculate the minimum positive integer $d$ such that $q^d$ appears in the quantum product of two Schubert classes in $\QH^*(X)$. In particular, we will heavily focus on the submaximal isotropic Grassmannians in types B and C (i.e. $\OG(k,2n+1)$ and $\IG(k,2n)$ when $k<n$).

The minimum quantum degree is calculated using Young diagrams for $\Gr$ by Postnikov \cites{postnikov:qbruhat, postnikov:affine} and Fulton and Woodward \cite{FW}. See also \cites{Buch,yong, Belkale}. In a recent paper B\"arligea \cite{Bar} gives an explicit formula to compute the minimum degree of the point class times the point class in terms of the cascade of orthogonal roots in $G/P$. We approach this problem by considering curve neighborhoods and various combinatorial models.

We study two sets of partitions that index Schubert varieties of $\OG$ and $\IG$. The first is the set of $(n-k)$-strict partitions given by
\[ 
\Lambda:=\{(\lambda_1 \geq \ldots \geq \lambda_k): 2n-k \geq \lambda_1, \lambda_k \geq 0, \mbox{ and if } \lambda_j>n-k \mbox{ then } \lambda_{j+1}<\lambda_j   \}.
\] 
The $(n-k)$-strict partitions are defined and used to calculate a Pieri rule for isotropic Grassmannians in \cites{BKT,BKT2}. For the next set of partitions first recall the set of partitions that index Schubert varieties (by codimension) in the Grassmannian $\Gr$. It is the set $\mathcal{P}(k,n)$ given by \[\{ (\lambda_1 \geq \lambda_2 \geq \cdots \geq \lambda_k): n-k \geq \lambda_1,\lambda_k \geq 0  \}.\]

The Schubert varieties of $\OG$ and $\IG$ are indexed by a subset of $\mathcal{P}(k,2n)$. We give the relevant definitions next.

\begin{defn}
Let $\lambda \in \mathcal{P}(k,n)$ be a partition. This partition's boundary consists of $n$ steps moving either left or down in the south-west direction. Let $D(\lambda)(i)=0$ if the $i$th step is left and $D(\lambda)(i)=1$ if the $i$th step is down.
\end{defn}

\begin{defn} \label{def:partbruh} 
Define \[ \mathcal{P}'(k,2n)=\{\lambda \in  \mathcal{P}(k,2n): \mbox{ if } D(\lambda)(i)=D(\lambda)(2n+1-i) \mbox{ then } D(\lambda)(i)=0 \}.\]
\end{defn}

\begin{example}
The partition $(5,2,1) \in \mathcal{P}'(3,8)$.  Pictorially, \[\yng(5,2,1) \in \mathcal{P}'(3,8).  \] On the contrary, the partition $(5,5,1) \notin \mathcal{P}'(3,8)$. Pictorially, \[\yng(5,5,1) \notin \mathcal{P}'(3,8).  \] 

In other words, $D((5,2,1))=10001010$ and $D((5,5,1))=1\underline{1}0000\underline{1}0$. The underlined entries of $D((5,5,1))$ do not satisfy the conditions of $\mathcal{P}’(3,8)$.
\end{example}

The partitions $\Lambda$ and $\mathcal{P}'(k,2n)$ each have a desired property. When using $(n-k)$-strict partitions $\Lambda$, the codimension of the Schubert variety $X^\lambda$ in $\IG$ and $\OG$ is $|\lambda|=\lambda_1+\lambda_2+\cdots+\lambda_k$. While partition inclusion and the Bruhat order are compatible for partitions in $\Lambda$ in the cominuscule cases (i.e. $k=n$), the compatibility fails to hold in the submaximal cases. On the other hand, partition inclusion on the set $\mathcal{P}'(k,2n)$ {\it does} respect the Bruhat order (this is  Proposition \ref{prop:partin}) but the partitions do not index the (co)dimension of Schubert varieties. 

Next we discuss curve neighborhoods. Let $X \in \{\Gr,\IG, \OG \}$. Let $X^\lambda \subset X$ be a Schubert variety where $\lambda$ is chosen from the appropriate indexing set. The curve neighborhood of $X^\lambda$, denoted by $\Gamma_d^X(X^\lambda)$, is the closure of rational curves of degree $d$ that intersect $X^\lambda$. It is known from \cite{BCMP:qkfin} that $\Gamma_d^X(X^\lambda)$ is a Schubert variety. So, there exists a partition $\lambda^d$ such that $\Gamma_d^X(X^\lambda)=X^{\lambda^d}$. The notion of the operator $\cdot^d$ is discussed further in Subsection \ref{subsection:mindegpointofview}. In particular, we define the operator $\cdot^d$ on tableaux and other indexing sets such that $\lambda^d$ is the correct index that realizes the aforementioned existence. Curve neighborhoods are calculated in terms of Young diagrams in \cite[Subsection 3.2, Table 1]{BCMP:eqkt} for cominuscule Grassmannians. Curve neighborhoods for the submaximal cases are calculated in terms of the Young diagrams in Theorem \ref{Thm:Ccrv} for $\IG$ and Theorem \ref{Thm:Bcrv} for $\OG$. These calculations are completed by considering a recursively defined formula for curve neighborhoods that is in terms of roots, coroots, and Weyl group elements. This recursion is given by Buch and Mihalcea in \cite{buch.m:nbhds} and it is stated in Proposition \ref{recursion}.

\begin{example}
For an example of Theorem \ref{Thm:Ccrv} let $\lambda \in P'(k,2n)$ index the Schubert variety $X^\lambda \subset \IG$ and let $d$ be an effective degree. Then $\Gamma_d^\IG(X^\lambda)=X^{\lambda^d}$ where
\begin{enumerate}
\item the curve neighborhood is indexed by $\lambda^1=(\lambda_2-1 \geq \lambda_3-1 \geq \cdots \geq \lambda_k-1 \geq 0)$ for $d=1$;
\item and the curve neighborhood is given by the (recursively defined) index $\lambda^d=(\lambda^{d-1})^1$ for $d>1$.
\end{enumerate}
\end{example}

We are using the indexing sets $\Lambda$ and $\mathcal{P}'(k,2n)$ for the first time to calculate curve neighborhoods and minimum degrees in $\IG(k,2n)$ and $\OG(k,2n+1)$. Thus our main results are a specialization of the results from Fulton and Woodward in \cite{FW}. We are ready to state our main results.


\begin{thm} \label{thm:visthmmaxbc}
Let $X \in \{ \IG(n,2n), \OG(n,2n+1)\}$. For any pair of Schubert classes $[X^\lambda], [X_\mu] \in \QH^*(X)$ where $\lambda, \mu \in \Lambda$, the smallest degree $d$ such that $q^d$ appears in  $[X^\lambda] \star [X_\mu]$ with nonzero coefficient is the smallest integer $d$ such that $\lambda^d \subset \mu$.
\end{thm}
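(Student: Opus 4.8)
The plan is to reduce the statement to two ingredients: the general characterization of minimum quantum degrees via curve neighborhoods, and the fact that in the cominuscule cases ($k=n$) partition inclusion on $\Lambda$ is compatible with the Bruhat order. The starting point is the standard fact (see \cite{FW}, or \cite{BCMP:qkfin} and the discussion around $\Gamma_d^X$ in Subsection~\ref{subsection:mindegpointofview}) that for Schubert classes $[X^\lambda]$ and $[X_\mu]$ the power $q^d$ appears in the quantum product $[X^\lambda]\star[X_\mu]$ with nonzero coefficient if and only if the curve neighborhood $\Gamma_d^X(X^\lambda)$ meets $X_\mu$ properly, equivalently $\Gamma_d^X(X^\lambda)\supseteq$ some translate meeting $X_\mu$; more precisely the minimal such $d$ is the least $d$ for which $\Gamma_d^X(X^\lambda)$ and $X_\mu$ intersect, i.e. for which the Schubert variety $X^{\lambda^d}$ satisfies $X^{\lambda^d}\cap X_\mu\neq\emptyset$. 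So the first step is to state this characterization cleanly and cite it.

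The second step is to translate "$X^{\lambda^d}\cap X_\mu\neq\emptyset$" into a combinatorial inclusion. For any $X=G/P$, two Schubert varieties $X^w$ (opposite-cell version) and $X_v$ intersect if and only if $v\ge w$ in the Bruhat order; in the Grassmannian-type indexing by partitions this is exactly partition containment when the indexing is Bruhat-compatible. In the cominuscule cases $\IG(n,2n)$ and $\OG(n,2n+1)$, the $(n-k)$-strict partitions $\Lambda$ reduce to ordinary strict partitions, and—as noted in the paragraph following Definition~\ref{def:partbruh} in the excerpt—partition inclusion on $\Lambda$ does respect the Bruhat order in the cominuscule case. Hence $X^{\lambda^d}\cap X_\mu\neq\emptyset$ is equivalent to $\lambda^d\subseteq\mu$. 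Combining with the first step gives that the minimal $d$ with $q^d$ appearing is the minimal $d$ with $\lambda^d\subseteq\mu$, which is the assertion.

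The third step is to make sure the operator $\cdot^d$ appearing in the statement is the correct one, i.e. that $\Gamma_d^X(X^\lambda)=X^{\lambda^d}$ with $\lambda^d\in\Lambda$ computed by the recursion of Buch–Mihalcea (Proposition~\ref{recursion}). For the cominuscule cases this is precisely the content of \cite[Subsection~3.2, Table~1]{BCMP:eqkt}, so I would either quote that table directly or observe that it is the $k=n$ specialization of Theorems~\ref{Thm:Ccrv} and~\ref{Thm:Bcrv}; either way the operator in the theorem statement is well defined and agrees with the geometric curve-neighborhood operator.

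The main obstacle is the second step: one must be certain that the Bruhat-order-versus-containment dictionary is set up with the right conventions (codimension versus dimension, $X^\lambda$ versus $X_\mu$, and which partition indexes which), so that the containment comes out as $\lambda^d\subseteq\mu$ rather than its reverse or a dual. Concretely, $\Gamma_d^X(X^\lambda)$ is a Schubert variety \emph{containing} $X^\lambda$, so $\lambda^d\subseteq\lambda$; and $X^{\lambda^d}\cap X_\mu\ne\emptyset$ forces the codimension of $X^{\lambda^d}$ to be at most the dimension of $X_\mu$, which in partition language is exactly $\lambda^d\subseteq\mu$. Getting these inclusions consistently oriented—and checking that the cominuscule compatibility of $\Lambda$ with Bruhat order is used in the correct direction—is where care is needed; the rest is formal.
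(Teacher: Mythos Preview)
Your proposal is correct and follows essentially the same approach as the paper: the paper's argument (via Lemma~\ref{prop:mainprop} and Theorem~\ref{thm:metathm}, together with the cominuscule curve-neighborhood formulas quoted in Lemma~3.4 from \cite{BCMP:eqkt}) is precisely the reduction you outline---minimum degree equals the least $d$ with $X_\mu\subset\Gamma_d^X(X^\lambda)=X^{\lambda^d}$, and in the cominuscule case this Bruhat condition is the partition inclusion $\lambda^d\subseteq\mu$. The only cosmetic difference is that the paper phrases the criterion as $X_\mu\subset X^{\lambda^d}$ rather than $X^{\lambda^d}\cap X_\mu\neq\emptyset$, but these are equivalent.
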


\begin{thm} \label{thm:visthm}
Let $X \in \{\OG(k,2n+1), \IG(k,2n)\}$. Here $k<n$ for $\OG$. For any pair of Schubert classes  $[X^\lambda], [X_\mu] \in \QH^*(X)$ where $\lambda, \mu \in \mathcal{P}'(k,2n)$, the smallest degree $d$ such that $q^d$ appears in  $[X^\lambda] \star [X_\mu]$ with nonzero coefficient is the smallest integer $d$ such that $\lambda^d \subset \mu$.
\end{thm}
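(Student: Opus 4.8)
The plan is to reduce everything to the characterization of minimum quantum degrees already available from Fulton–Woodward \cite{FW}, together with the two combinatorial inputs highlighted in the introduction: the curve neighborhood computation in terms of the operator $\cdot^d$ on $\mathcal{P}'(k,2n)$ (Theorem \ref{Thm:Ccrv} for $\IG$ and Theorem \ref{Thm:Bcrv} for $\OG$), and the compatibility of partition inclusion with the Bruhat order on $\mathcal{P}'(k,2n)$ (Proposition \ref{prop:partin}). The starting point is the standard fact, valid for any homogeneous space $X=G/P$, that $q^d$ appears with nonzero coefficient in the quantum product $[X^\lambda]\star[X_\mu]$ for some degree $d$ if and only if the curve neighborhood $\Gamma_d^X(X^\lambda)$ meets $X_\mu$; equivalently, writing $\Gamma_d^X(X^\lambda)=X^{\lambda^d}$, if and only if $X^{\lambda^d}$ and $X_\mu$ intersect, which by transversality of opposite Schubert varieties happens exactly when $[X^{\lambda^d}]\cdot[X_\mu]\neq 0$ in ordinary cohomology, i.e.\ when $w(\lambda^d)\leq w(\mu)$ in Bruhat order (with the appropriate reindexing). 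The minimum such $d$ is therefore the minimum $d$ for which this Bruhat inequality holds; this is precisely the content we draw from \cite{FW}.

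The heart of the argument is then to translate the Bruhat inequality $w(\lambda^d)\leq w(\mu)$ into the purely combinatorial statement $\lambda^d\subset\mu$, using the set $\mathcal{P}'(k,2n)$. First I would invoke Theorem \ref{Thm:Ccrv} (resp.\ Theorem \ref{Thm:Bcrv}) to identify $\Gamma_d^X(X^\lambda)$ with $X^{\lambda^d}$, where $\lambda^d$ is the $d$-fold iterate of the operator $\cdot^1$ on $\mathcal{P}'(k,2n)$ described in the $\IG$ example following that theorem. Crucially, the recursion of Buch–Mihalcea in Proposition \ref{recursion} is what validates that these operators genuinely compute curve neighborhoods, so no independent verification is needed here. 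Next, by Proposition \ref{prop:partin}, partition inclusion on $\mathcal{P}'(k,2n)$ coincides with the Bruhat order on the indexed Schubert varieties; hence $X^{\lambda^d}\supseteq X_\mu$ (as opposite Schubert varieties, after passing to the dual partition as needed) is equivalent to $\lambda^d\subset\mu$. Combining the two equivalences yields that the minimum degree $d$ with $q^d$ appearing in $[X^\lambda]\star[X_\mu]$ is the smallest $d$ with $\lambda^d\subset\mu$. One also needs to check monotonicity, namely that once $\lambda^d\subset\mu$ holds it continues to hold for all larger $d$ — this follows because $\lambda^{d+1}=(\lambda^d)^1$ is contained in $\lambda^d$ (the operator $\cdot^1$ shifts rows and decreases entries), so the set of valid $d$ is upward closed and ``smallest'' is well-defined.

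The main obstacle I anticipate is bookkeeping the passage between the $(n-k)$-strict partitions $\Lambda$ (which carry the codimension grading and are the natural home for Fulton–Woodward-type degree bounds) and the set $\mathcal{P}'(k,2n)$ (which carries the Bruhat-compatible inclusion but not the grading). Fulton–Woodward's criterion is most naturally stated in terms of the chains of simple reflections / the quantum Bruhat graph, and one must verify that the operator $\cdot^d$ on $\mathcal{P}'(k,2n)$ — defined via the root-theoretic recursion of Proposition \ref{recursion} — really does correspond to $d$ steps in that graph and that the Bruhat comparison transported to $\mathcal{P}'(k,2n)$ is exactly partition inclusion with no parity exceptions hiding in the $D(\lambda)$ condition. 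Managing the two involutions at play (the one sending a Schubert variety to its opposite, reflected in replacing $\mu$ by its complement, and the duality built into the definition of $\mathcal{P}'(k,2n)$) so that the final inequality reads cleanly as $\lambda^d\subset\mu$ is the delicate point; once that dictionary is pinned down, the theorem follows formally from the three cited results.
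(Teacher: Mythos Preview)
Your proposal is correct and follows essentially the same route as the paper: the paper packages the Fulton--Woodward/BCLM input as Lemma~\ref{prop:mainprop} (via Theorem~\ref{thm:mindeg}), feeds in the curve neighborhood computations of Theorems~\ref{Thm:Ccrv} and~\ref{Thm:Bcrv}, and then invokes Proposition~\ref{prop:partin} to convert the Bruhat inequality into partition inclusion---exactly the three ingredients you identify. Your worries about bookkeeping between $\Lambda$ and $\mathcal{P}'(k,2n)$ are handled in the paper by the dictionary Propositions~\ref{prop:C} and~\ref{prop:B}, and the monotonicity of $d\mapsto\lambda^d$ is implicit in the curve-neighborhood interpretation $\Gamma_d(X^\lambda)\subseteq\Gamma_{d+1}(X^\lambda)$, so no separate argument is needed.
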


\begin{example} \label{ex:C}
For $\IG(5,2 \cdot 8)$ consider $\lambda=(11 \geq 11 \geq 11 \geq 4 \geq 4) \in \mathcal{P}'(5,2 \cdot 8)$ and $\mu=(7,7) \in  \mathcal{P}'(5,2 \cdot 8).$ Then we have that 

\[ \lambda=\yng(11,11,11,4,4) \mbox{ and } \mu=\yng(7,7)\]
\begin{align*}
\lambda^1 &= \yng(10,10,3,3)\\
\lambda^2 &= \yng(9,2,2)\\
\lambda^3 &= \yng(1,1).
\end{align*}
Thus, 3 is the smallest degree $d$ such that $q^d$ appears in  $[X^\lambda] \star [X_\mu]$ with nonzero coefficient.
\end{example}

We will discuss the type A Grassmannian case in its entirety since the proofs of the submaximal cases in types B and C will follow a similar structure. The $\Gr(k,n), \IG(n,2n),$ and $\OG(n,2n+1)$ cases are given in Section \ref{sect:typeA}. In Section \ref{sect:typeCpostnikov} we prove Theorem \ref{thm:visthm} for the submaximal isotropic Grassmannian cases in types B and C.

{\em Acknowledgements.} The authors would like to thank Leonardo Mihalcea, Mark Shimozono, and Anders Buch for useful discussions. The first author is partially supported by the Building Research Excellence (BRE) Program at Salisbury University. We would like to thank the anonymous referees for their useful comments and suggestions.

\section{Background}\label{sect:background}

\subsection{Flag varieties}
We will follow the exposition of \cite{BCLM:EulerChar}. Let $X=G/P$ be the flag variety defined by a connected semisimple complex group $G$ and a parabolic subgroup $P$. Fix a maximal torus $T$ and a Borel subgroup $B$ such that $T \subset B \subset P \subset G$. The opposite Borel subgroup $B^- \subset G$ is defined by $B \cap B^-=T$. Let $W=N_G(T)/T$ be the Weyl group of $G$, $W_P=N_P(T)/T$ the Weyl group of $P$, and let $W^P \subset W$ be the subset of minimal-length representatives of the cosets in $W/W^P$. Let $R$ denote the root system of $G$, with positive roots $R^+$ and simple roots $\Delta \subset R^+$. Similarly, let $R^\vee$ denote the coroot system of $G$, with positive coroots $(R^\vee)^+$ and simple coroots $\Delta^\vee \subset (R^\vee)^+$. The parabolic subgroup $P$ is determined by the subset $\Delta_P=\{ \beta \in \Delta: s_{\beta} \in W_P\}.$ Each element $w \in W$ defines a $B$-stable Schubert variety $X_w=\overline{Bw.P}$ and a $B^-$-stable (opposite) Schubert variety $X^w=\overline{B^-w.P}$. If $w \in W^P$ is a minimal-length representative, then $\dim(X_w)=\text{codim}(X^w,X)=\ell(w)$.

The group $H_2(X,\mathbb{Z})=\mathbb{Z}\Delta^\vee/\mathbb{Z}\Delta_{P}^\vee$ is a free $\mathbb{Z}$-module, with a basis consisting of the Schubert classes $[X_{s_\beta}]$ for $\beta \in \Delta \backslash \Delta_P$. Given two elements $d=\sum_\beta d_\beta [X_{s_\beta}]$ and $d'=\sum_\beta d'_\beta [X_{s_\beta}]$ expressed in this basis, we write $d \leq d'$ if and only if $d_\beta \leq d'_\beta$ for each $\beta \in \Delta \backslash \Delta_P$. This defines a partial order on $H_2(X,\mathbb{Z})$.

For any root $\alpha \in R$ that is not in the span $R_P$ of $\Delta_P$, there exists a unique irreducible $T$-invariant curve $X(\alpha) \subset X$ that connects the points $1.P$ and $s_\alpha.P$. An arbitrary irreducible $T$-invariant curve $C \subset X$ has the form $C=w.X(\alpha)$ for some $w \in W$ and $\alpha \in R^+ \backslash R_P$.

\subsection{Quantum cohomology}
Given an effective degree $d \geq 0$ in $H_2(X,\mathbb{Z})$, we let $\overline{\mathcal{M}}_{0,n}(X,d)$ denote the Kontsevich moduli space of $n$-pointed stable maps $f:C \rightarrow X$ of arithmetic genus zero and degree $f_*[C]=d$. This space is equipped with evaluation maps $\text{ev}_i: \overline{\mathcal{M}}_{0,n}(X,d) \rightarrow X$ for $1 \leq i \leq n$, where $\text{ev}_i$ sends a stable map to the image of the $i$-th marked point in its domain. Given cohomology classes $\gamma_1,\ldots,\gamma_n \in H^*(X,\mathbb{Z})$, the corresponding (cohomological) Gromov-Witten invariants of degree $d$ are defined by 
\[ 
\left< \gamma_1, \ldots, \gamma_n\right>_d = \int_{\overline{\mathcal{M}}_{0,n}(X,d)} \text{ev}^*_1(\gamma_1) \wedge \cdots \wedge \text{ev}^*_n(\gamma_n). 
\] 
Let $\mathbb{Z}[q]= \mathbb{Z}[q_\beta: \beta \in \Delta \backslash \Delta_P]$ denote the polynomial ring in variables $q_\beta$ corresponding to the basis elements of $H_2(X,\mathbb{Z})$. Given any degree $d=\sum_\beta d_\beta [X_{s_\beta}] \in H_2(X,\mathbb{Z})$, we will write $q^d=\Pi_\beta q^{d_\beta}_\beta$. The (small) quantum cohomology ring $\text{QH}(X)$ is a $\mathbb{Z}[q]$-algebra which, as a $\mathbb{Z}[q]$-module, is defined by $\text{QH}^*(X)=H^*(X,\mathbb{Z}) \otimes_{\mathbb{Z}} \mathbb{Z}[q]$. The product is defined by 
\[ 
\gamma_1 \star \gamma_2 = \sum_{w,d \geq 0} \left< \gamma_1,\gamma_2, [X_w] \right>_d q^d [X^w] 
\] 
for $\gamma_1, \gamma_2 \in H^*(X,\mathbb{Z})$. Here we identify any classes $\gamma \in H^*(X, \mathbb{Z})$ with $\gamma \otimes 1 \in \text{QH}(X)$.

In the following, the image of a stable map to $X$ will be called a stable curve in $X$. Given a stable curve $C \subset X$, we let $[C]$ denote the degree in $H_2(X,\mathbb{Z})$ defined by $C$. In particular, we set $[C]=0$ if $C$ is a single point.

\subsection{Distance} For each $\beta \in \Delta$ we set $Z_\beta=G/P_\beta$, where $P_\beta \subset G$ is the unique maximal parabolic subgroup containing $B$ for which $s_\beta \notin W_{P_\beta}$. The group $H_2(Z_\beta, \mathbb{Z})$ is a free rank one $\mathbb{Z}$-module, generated by $[(Z_\beta)_{s_\beta}].$ To simplify notation we identify $H_2(Z_\beta, \mathbb{Z})=\mathbb{Z}\Delta^\vee/\mathbb{Z}\Delta_{P_\beta}^\vee$ with $\mathbb{Z}$, by identifying $[(Z_\beta)_{s_\beta}]$ with 1. Let $\pi_\beta: X \rightarrow Z_\beta$ denote the projection. Any degree $d \in H_2(X,\mathbb{Z})$ is then given by $d=\sum_\beta d_\beta [X_{s_\beta}]$, where $d_\beta=(\pi_\beta)_*(d)$.

Given $u,v \in W^P$ let $\text{dist}_\beta(u,v) \in \mathbb{Z}$ denote the smallest degree of a stable curve in $Z_\beta$ connecting the Schubert varieties $(Z_\beta)^u$ and $(Z_\beta)_v$. This is well defined since the natural numbers are well ordered. Define the \textit{distance} between $X^u$ and $X_v$ to be the class in $H_2(X,\mathbb{Z})$ given by \[ \displaystyle \text{dist}_X(u,v)= \sum_{\beta \in \Delta \backslash \Delta_P} \text{dist}_\beta (u,v) [X_{s_\beta}].\] The following Theorem is from \cite{BCLM:EulerChar}.

\begin{thm} \label{thm:mindeg}
(Buch, Chung, Li, Mihalcea) Let $u,v \in W$ and $d \in H_2(X, \mathbb{Z})$. Then there exists a stable curve of degree $d$ from $X^u$ to $X_v$ if and only if $d \geq \text{dist}_X(u,v)$.
\end{thm}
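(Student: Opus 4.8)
The plan is to reduce the statement about stable curves of degree $d$ connecting $X^u$ and $X_v$ in the flag variety $X = G/P$ to the corresponding statement in each rank-one projection $Z_\beta = G/P_\beta$, where the claim becomes essentially combinatorial. One direction is immediate: if $d \geq \mathrm{dist}_X(u,v)$, I would produce the desired curve by taking, for each $\beta \in \Delta \setminus \Delta_P$ with $d_\beta > 0$, a minimal-degree stable curve $C_\beta$ in $Z_\beta$ connecting $(Z_\beta)^u$ to $(Z_\beta)_v$, lifting it appropriately to $X$, and concatenating these lifts (padding with extra lines through suitable $T$-fixed points to reach the exact multidegree $d$) into a connected genus-zero stable curve whose image meets both $X^u$ and $X_v$. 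The key input here is that $X^u \cap X_v \neq \emptyset$ whenever $u \leq v$ in the Bruhat order (or more precisely that for degree-zero connectivity one needs $X^u$ and $X_v$ to intersect), together with the fact that the projections $\pi_\beta$ are surjective and carry Schubert varieties to Schubert varieties.

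For the converse, suppose $C \subset X$ is a stable curve of degree $d$ meeting $X^u$ and $X_v$. Applying the projection $\pi_\beta: X \to Z_\beta$ for each simple root $\beta \in \Delta \setminus \Delta_P$, the image $\pi_\beta(C)$ is a connected stable curve in $Z_\beta$ of degree $d_\beta = (\pi_\beta)_*(d)$, and it meets $\pi_\beta(X^u) \supseteq (Z_\beta)^u$ and $\pi_\beta(X_v) \supseteq (Z_\beta)_v$ — actually one needs that $\pi_\beta(X^u) = (Z_\beta)^{\bar u}$ and $\pi_\beta(X_v) = (Z_\beta)_{\bar v}$ for the relevant projected Weyl group elements, and that the projected Schubert varieties are precisely the ones appearing in the definition of $\mathrm{dist}_\beta$. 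Hence by definition of $\mathrm{dist}_\beta(u,v)$ as the \emph{smallest} such degree, we get $d_\beta \geq \mathrm{dist}_\beta(u,v)$ for every $\beta$, which is exactly $d \geq \mathrm{dist}_X(u,v)$.

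The main obstacle I anticipate is the gluing/lifting step in the forward direction: one must be careful that the various lifted curves $C_\beta$ can be chosen to pass through a common $T$-fixed point (so that their union is connected) while keeping the total class equal to $d$ and the total object a genuine stable map of genus zero. The cleanest way around this is to route everything through $T$-fixed points and $T$-invariant curves: since any $T$-invariant curve in $X$ has the form $w.X(\alpha)$ with $X(\alpha)$ joining $1.P$ and $s_\alpha.P$, one can build a $T$-invariant connected chain of such curves realizing any prescribed effective class, anchored at a $T$-fixed point lying in $X^u \cap X_v$. The existence of such a fixed point when the distance is finite, and the verification that concatenating the pieces does not overshoot the degree in any coordinate $\beta$, are the technical points that require care; the rest of the argument is formal once the behavior of $\pi_\beta$ on Schubert varieties is pinned down. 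I would also remark that this theorem is quoted from \cite{BCLM:EulerChar}, so in the present paper it may simply be invoked rather than reproved, but the projection-to-$Z_\beta$ philosophy above is precisely the mechanism by which the later curve-neighborhood computations in Theorems \ref{Thm:Ccrv} and \ref{Thm:Bcrv} will be carried out.
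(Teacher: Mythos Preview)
The paper does not prove this theorem; it is stated as a cited result from \cite{BCLM:EulerChar} (Buch, Chung, Li, Mihalcea) and simply invoked. You correctly anticipated this at the end of your proposal. So there is no ``paper's own proof'' to compare against.

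That said, your sketch is broadly in the right spirit for how such a result is proved in the original reference: the converse direction (existence of a curve implies $d \geq \mathrm{dist}_X(u,v)$) does go via the projections $\pi_\beta$, and the forward direction is the more delicate one. Your concern about the gluing/lifting step is well-placed; in practice the forward direction in \cite{BCLM:EulerChar} is handled not by lifting curves from each $Z_\beta$ separately, but by using curve neighborhoods directly in $X$: one shows that $X_v \subset \Gamma_d^X(X^u)$ precisely when $d \geq \mathrm{dist}_X(u,v)$, and the curve is then produced inside $X$ from the definition of $\Gamma_d$ as an image under evaluation maps, bypassing the need to glue lifts from different $Z_\beta$ at a common $T$-fixed point. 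For the purposes of this paper, though, none of that is needed --- the theorem is a black box.
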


\subsection{Curve neighborhoods}
Let $d \in H_2(X)$ be an effective degree, and let $\Omega \subset X$ be a closed subvariety. Consider the moduli space of stable maps $\overline{\mathcal{M}}_{0,2}(X,d)$ with evaluation maps $\ev_1, \ev_2.$ The {\it curve neighborhood} of $\Omega$ is the subscheme 
\[ 
\Gamma^X_d(\Omega):=\ev_2(\ev_1^{-1} \Omega) \subset X
\] 
endowed with a reduced scheme structure. The notion was introduced by Buch, Chaput, Mihalcea and Perrin \cite{BCMP:qkfin} to help study the quantum K theory ring of cominuscule Grassmannians.

To compute curve neighborhoods we will use the {\it Hecke product} on the Weyl group $W$. For a simple reflection $s_i$ the product is defined by  \[w \cdot s_i = \left\{
     \begin{array}{lr}
       ws_i &  \mbox{if } \ell(ws_i)>\ell(w); \\
       w &  \mbox{otherwise}
     \end{array} \right. \]
The Hecke product gives $W$ the structure of an associative monoid. Given $u,v \in W$, the product $uv$ is called {\it reduced} if $\ell(uv)=\ell(u)+\ell(v)$. For any parabolic group $P$, the Hecke product determines a left action $W \times W/W_{P} \longrightarrow W/W_{P}$ defined by \[u \cdot (wW_{P})=(u \cdot w) W_P. \] 

Observe from \cite{buch.m:nbhds} that if $\Omega$ is a Schubert variety, then $\Gamma^X_d(\Omega)$ must be a Schubert variety. Let $z_d$ be defined by the condition: $\Gamma^X_d(1.P)=X(z_dW_P)$. The following Proposition describes the curve neighborhood $\Gamma^X_d(X^w)$ as a Schubert variety (see \cite{buch.m:nbhds}).

\begin{prop}
The curve neighborhood of $X^w$ is $\Gamma^X_d(X^w)=X^{w \cdot z_d W_P}$.
\end{prop}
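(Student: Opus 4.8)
The plan is to reduce the computation of $\Gamma^X_d(X^w)$ to the single known curve neighborhood $\Gamma^X_d(1.P)=X(z_dW_P)$ by sweeping with the group action, and then to convert the resulting product of Schubert cells into a Hecke product in $W$.

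First I would record the standard geometric inputs for stable maps into a homogeneous space: $\overline{\mathcal{M}}_{0,2}(X,d)$ is irreducible (since $X=G/P$ is convex), the evaluation maps $\ev_1,\ev_2$ are $G$-equivariant and proper, and $\ev_1$ is flat and surjective with irreducible fibers. Because $X^w$ is irreducible and closed, $\ev_1^{-1}(X^w)$ is irreducible and closed, so $\Gamma^X_d(X^w)=\ev_2(\ev_1^{-1}(X^w))$ is irreducible, closed, and $B$-stable; by the observation of Buch--Mihalcea quoted above it is a Schubert variety $X^u$ for a unique $u\in W^P$, and only the index $u$ remains to be pinned down.

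Second, I would express $\Gamma^X_d(X^w)$ in terms of $\Gamma^X_d(1.P)$. As $\ev_1$ is flat and surjective, the preimage of the open Schubert cell $BwP\subset X^w$ is dense in $\ev_1^{-1}(X^w)$; by equivariance this preimage equals $(BwB)\cdot\ev_1^{-1}(1.P)$, so applying $\ev_2$ and taking closures gives $\Gamma^X_d(X^w)=\overline{(BwB)\cdot\ev_2(\ev_1^{-1}(1.P))}$. Since $1.P$ is the $B$-fixed point, $\ev_2(\ev_1^{-1}(1.P))$ is $B$-stable and constructible with closure $X(z_dW_P)$, hence a union of Schubert cells that contains the open cell $Bz_dP$; this sandwiches $\Gamma^X_d(X^w)$ between $\overline{(BwB)\cdot Bz_dP}$ and $\overline{(BwB)\cdot X(z_dW_P)}$.

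Third --- the crux --- I would identify both bounds with $X^{w\cdot z_dW_P}$ via the compatibility of products of Schubert cells with the Hecke monoid structure on $W$: in $G/B$ one has $\overline{(BwB)(Bz_dB)}=\overline{B(w\cdot z_d)B}$, proved by induction on $\ell(w)$ using $\overline{Bs_iB}\cdot\overline{BvB}=\overline{B(s_i\cdot v)B}$ for simple reflections, and projecting to $G/P$ this is the Schubert variety $X_{(w\cdot z_d)W_P}$, the projection being clean because $z_d\in W^P$. Substituting $u=w$, $v=z_d$ into both the lower and the upper bound collapses the sandwich to the asserted equality (alternatively one can read off $u$ from a moment-graph computation on $T$-fixed points, but the cell-product route is more direct). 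The main obstacle will be exactly this last step: proving, or carefully citing, that the Zariski closure of a product of two Schubert cells is governed by the Hecke product, and then checking that this persists under $G/B\to G/P$ even when $w\cdot z_d$ is not already a minimal coset representative; in particular one must be certain the lower bound carries the full Hecke index $w\cdot z_d$ and not something strictly smaller, which is precisely what the inclusion $Bz_dP\subseteq\ev_2(\ev_1^{-1}(1.P))$ secures. The geometric facts used in the first two steps (irreducibility of $\overline{\mathcal{M}}_{0,2}(X,d)$, flatness and surjectivity of $\ev_1$, density of $\ev_1^{-1}$ of a dense open, density of $\ev_2$ of a dense subset in its image, and properness of $\ev_2$) are standard for moduli of stable maps into homogeneous varieties and may be invoked by citation to \cite{buch.m:nbhds}.
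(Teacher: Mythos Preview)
The paper does not supply a proof of this proposition; it is quoted from \cite{buch.m:nbhds} with only a citation. Your outline---reduce via $G$-equivariance of the evaluation maps to the curve neighborhood of a single fixed point, then identify the result using the compatibility of products of Bruhat cells with the Hecke monoid on $W$---is precisely the argument given there, and the overall structure is sound.

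There is, however, a slip in the execution. In the paper's conventions $X^w=\overline{B^-w.P}$ is $B^-$-stable, with dense cell $B^-w.P$; the cell $Bw.P$ you invoke is the open cell of $X_w=\overline{Bw.P}$, and $Bw.P\not\subset X^w$ in general, so the clause ``the open Schubert cell $BwP\subset X^w$'' is false as stated. What your argument actually establishes, verbatim, is the companion formula $\Gamma_d(X_w)=X_{w\cdot z_d W_P}$ for the $B$-stable Schubert varieties. To obtain the version for $X^w$ you must either rerun the argument with $B^-$ in place of $B$---the relevant fixed point then becomes $w_0.P$ rather than $1.P$, and the cell-product step involves $B^-$-double cosets---or deduce it from the $X_w$ case via the automorphism of $G$ exchanging $B$ and $B^-$. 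Either route is routine, but neither is automatic: the Hecke identity $\overline{BuB\cdot BvB}=\overline{B(u\cdot v)B}$ you cite does not directly control $\overline{B^-w\cdot X_{z_d}}$, so the passage should be made explicit.
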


We need the following definition to state Proposition \ref{recursion}.

\begin{defn}
The maximal elements of the set $\{ \beta \in R^+ \backslash R_P^+ : \beta^\vee+ \Delta_P^\vee \leq d\}$ are called {\it maximal roots} of $d$.
\end{defn}

\begin{prop} \label{recursion}
\cite[Corollary 4.12]{buch.m:nbhds} Let $d \in H_2(X)$ be an effective degree and $w \in W^P$. Then if $\alpha \in R^+ \backslash R_P^+$ is a maximal root of $d$, then $s_\alpha \cdot z_{d-\alpha^\vee}W_P=z_dW_P$.  
\end{prop}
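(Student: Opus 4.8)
The plan is to follow the torus-fixed-point description of curve neighborhoods, as in \cite[Corollary 4.12]{buch.m:nbhds}, and prove the identity by two matching inequalities. First I would recall that, since $\Gamma^X_d(1.P)=X(z_dW_P)$ is a Schubert variety, it is pinned down by its $T$-fixed points: a coset $vW_P$ with $v\in W^P$ lies in it precisely when $1.P$ and $v.P$ can be joined by a connected genus-zero stable curve of degree $d$, equivalently (Theorem \ref{thm:mindeg}) when $\text{dist}_X(e,v)\leq d$. A $T$-equivariant degeneration then lets me replace any such curve by a connected chain $C_1\cup\cdots\cup C_m$ of irreducible $T$-invariant curves with $[C_1]+\cdots+[C_m]\leq d$ passing successively through $1.P=x_0,x_1,\dots,x_m=v.P$, where each $C_j$ joins $x_{j-1}=u_j.P$ to $x_j=u_js_{\gamma_j}.P$ for some $\gamma_j\in R^+\setminus R_P$ with $[C_j]$ the image of $\gamma_j^\vee$ in $H_2(X,\mathbb Z)$. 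Thus $z_dW_P$ is the Bruhat-largest coset reachable by such a chain, and it suffices to compare $z_dW_P$ with $s_\alpha\cdot z_{d-\alpha^\vee}W_P$ in both directions.

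For the inequality $s_\alpha\cdot z_{d-\alpha^\vee}W_P\leq z_dW_P$ I would build an explicit chain. Write $e\in H_2(X,\mathbb Z)$ for the image of $\alpha^\vee$; maximality of $\alpha$ gives $e\leq d$, so $d-e$ is effective and $\Gamma^X_{d-e}(1.P)=X(z_{d-\alpha^\vee}W_P)$. The $T$-curve $X(\alpha)$ joins $1.P$ to $s_\alpha.P$ and has degree $e$; gluing to it the $s_\alpha$-translate of a degree-$(d-e)$ curve running from $1.P$ to an arbitrary $T$-fixed point $v.P\in X(z_{d-\alpha^\vee}W_P)$ produces a connected degree-$d$ curve through $1.P$ and $s_\alpha v.P$. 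Hence every $s_\alpha v.P$ with $vW_P\leq z_{d-\alpha^\vee}W_P$ lies in $\Gamma^X_d(1.P)=X(z_dW_P)$; since the Bruhat-largest such point is exactly the Hecke product $s_\alpha\cdot z_{d-\alpha^\vee}W_P$, this gives the claimed inequality.

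For the reverse inequality $z_dW_P\leq s_\alpha\cdot z_{d-\alpha^\vee}W_P$ I would take a chain $C_1\cup\cdots\cup C_m$ of total degree $d$ realizing $z_d.P$ and rearrange it so that one of its curves is exactly $X(\alpha)$ while the remaining curves form a connected chain of degree $\leq d-e$; the endpoint of the rearranged chain then differs from $z_d.P$ by left multiplication by $s_\alpha$ and is reachable in degree $\leq d-e$, hence is $\leq z_{d-\alpha^\vee}$ in the Bruhat order, and feeding this through the Hecke product yields the inequality. I expect this rearrangement to be the main obstacle: it is an exchange argument in which the \emph{maximality} of $\alpha$ among the roots $\beta$ with $\beta^\vee+\Delta_P^\vee\leq d$ is precisely what guarantees that pulling $X(\alpha)$ to the front of an optimal chain never decreases the Bruhat endpoint (an arbitrary root with $\beta^\vee+\Delta_P^\vee\leq d$ would not suffice). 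Making this precise is the one step that genuinely uses the interplay between the Hecke product on $W$, the Bruhat order on $W/W_P$, and the partial order on $H_2(X,\mathbb Z)$; everything else reduces to bookkeeping with $T$-fixed points and the monotonicity $\Gamma^X_{d'}(\Omega)\subseteq\Gamma^X_d(\Omega)$ for $d'\leq d$.
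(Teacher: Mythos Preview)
The paper does not give its own proof of this proposition: it is simply quoted from \cite[Corollary 4.12]{buch.m:nbhds} and used as a black box throughout. So there is nothing in the paper to compare your argument against.

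On the merits of your sketch: your easy inequality $s_\alpha\cdot z_{d-\alpha^\vee}W_P\leq z_dW_P$ is fine and is exactly the gluing argument one expects. The reverse inequality is where the content lies, and your plan there is not yet a proof. You propose to take an optimal $T$-invariant chain $C_1\cup\cdots\cup C_m$ of degree $\leq d$ reaching $z_d.P$ and ``rearrange it so that one of its curves is exactly $X(\alpha)$''. But there is no reason any curve in an optimal chain should have root equal to $\alpha$; the first root $\gamma_1$ in the chain need only satisfy $\gamma_1^\vee+\mathbb Z\Delta_P^\vee\leq d$, not $\gamma_1=\alpha$. What one actually needs is a replacement lemma: for \emph{any} $\gamma\in R^+\setminus R_P^+$ with $\gamma^\vee+\mathbb Z\Delta_P^\vee\leq d$ and any maximal root $\alpha$ of $d$, one has $s_\gamma W_P\leq s_\alpha W_P$ in the Bruhat order on $W/W_P$, and moreover $(d-\gamma^\vee)+\mathbb Z\Delta_P^\vee\leq (d-\alpha^\vee)+\mathbb Z\Delta_P^\vee$ is \emph{false} in general, so one cannot simply swap $\gamma$ for $\alpha$ and keep the remaining budget. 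The way Buch--Mihalcea handle this is not by rearranging a single geometric chain but by an induction on degree together with a purely combinatorial statement about Hecke products and maximal roots (their ``greedy decomposition'' of $z_d$); the maximality hypothesis is used to show that $s_\alpha$ already absorbs, under the Hecke product, the contribution of any competing first step. Your outline correctly identifies that maximality is the crux, but the exchange mechanism you describe is not the one that works, and you would need to supply that combinatorial lemma to close the gap.
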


Proposition \ref{recursion} is the key ingredient that we use to calculate curve neighborhoods in varying combinatorial models. In particular, we specialize Proposition \ref{recursion} to the $\Gr$ case in Proposition \ref{prop:arecursion}, the $\IG$ case in Proposition \ref{prop:crecursion}, and the submaximal case of $\OG$ in Proposition \ref{prop:brecursion}.

\subsection{Minimum degree from the point of view of curve neighborhoods} \label{subsection:mindegpointofview}We begin this subsection with a lemma that states the correspondence between the  minimum degree and curve neighborhoods.

\begin{lemma} \label{prop:mainprop}
Let $u,v \in W^P$. Let $d$ be the minimal degree such that $X_v \subset \Gamma^X_d(X^u)$ in $X=G/P$. Then $d$ is the smallest power of $q$ in the quantum product $[X^u] \star [X_v].$
\end{lemma}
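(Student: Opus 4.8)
The plan is to connect the two characterizations of minimum degree—one via curve neighborhoods and one via Gromov–Witten invariants—through the distance function of Theorem \ref{thm:mindeg}. First I would recall that for a Schubert variety $X^u$, the curve neighborhood $\Gamma^X_d(X^u)$ is again a Schubert variety, say $X^{u \cdot z_d W_P}$. The key translation is: $X_v \subset \Gamma^X_d(X^u)$ if and only if there exists a stable curve of degree $d$ connecting $X^u$ to $X_v$. Indeed, $X_v$ lies in the degree-$d$ curve neighborhood of $X^u$ exactly when some point of $X_v$ can be reached from $X^u$ by a stable curve of degree $d$; since both are $B$- (resp.\ $B^-$-)stable and curve neighborhoods of Schubert varieties are Schubert varieties, containment of the whole variety $X_v$ is equivalent to the existence of such a connecting curve. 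Combined with Theorem \ref{thm:mindeg}, this says $X_v \subset \Gamma^X_d(X^u)$ if and only if $d \geq \mathrm{dist}_X(u,v)$.

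Next I would bring in the quantum cohomology side. The coefficient of $q^d [X^w]$ in $[X^u] \star [X_v]$ is the Gromov–Witten invariant $\langle [X^u], [X_v], [X_w] \rangle_d$, and by definition the smallest power of $q$ appearing in $[X^u] \star [X_v]$ is the smallest effective $d$ for which some such invariant is nonzero. The standard fact—again essentially Theorem \ref{thm:mindeg} or the results of Buch–Chung–Li–Mihalcea underlying it—is that $\langle [X^u], [X_v], [X_w]\rangle_d \neq 0$ for some $w$ precisely when there is a degree-$d$ stable curve meeting both $X^u$ and $X_v$, i.e.\ again when $d \geq \mathrm{dist}_X(u,v)$. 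So the smallest power of $q$ in $[X^u]\star[X_v]$ is the minimal $d$ with $d \geq \mathrm{dist}_X(u,v)$.

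Putting the two halves together: both the minimal degree $d$ with $X_v \subset \Gamma^X_d(X^u)$ and the minimal degree appearing in $[X^u]\star[X_v]$ equal the minimal effective degree dominating $\mathrm{dist}_X(u,v)$, hence they coincide. I would also note that "minimal degree such that $X_v \subset \Gamma_d^X(X^u)$" is well-defined because the set of such $d$ is upward-closed in the partial order on $H_2(X,\mathbb{Z})$ and the dominance order on effective classes has a least such element (namely $\mathrm{dist}_X(u,v)$ itself, which is the componentwise minimum).

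The main obstacle I anticipate is the equivalence "$X_v \subset \Gamma^X_d(X^u)$ iff a degree-$d$ stable curve joins $X^u$ and $X_v$," which requires care: $\Gamma^X_d(X^u) = \ev_2(\ev_1^{-1}(X^u))$ parametrizes second marked points of degree-$d$ stable maps whose first marked point lies in $X^u$, so a priori this only gives that $X_v$ meets $\Gamma^X_d(X^u)$, not that it is contained in it. The resolution uses that $\Gamma^X_d(X^u)$ is $B^-$-stable (it is an opposite Schubert variety) while $X_v$ is $B$-stable, together with the fact that a $B^-$-stable subvariety containing a point of the $B$-stable variety $X_v$ must contain all of $X_v$—equivalently, one invokes the Kleiman–Bertini/moving-lemma argument already packaged in Theorem \ref{thm:mindeg}. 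Once this identification is made, the rest is bookkeeping with the partial order on degrees.
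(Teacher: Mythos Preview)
Your proposal is correct and follows essentially the same route as the paper: both identify the minimal degree with $\mathrm{dist}_X(u,v)$ via Theorem \ref{thm:mindeg}, and then invoke the Buch--Chung--Li--Mihalcea result that $\mathrm{dist}_X(u,v)$ is also the smallest power of $q$ appearing in $[X^u]\star[X_v]$. One caution on your discussion of the obstacle: the assertion that ``a $B^-$-stable subvariety containing a point of the $B$-stable variety $X_v$ must contain all of $X_v$'' is false as stated (a $B^-$-fixed point gives a counterexample), but since you ultimately defer this step to Theorem \ref{thm:mindeg} rather than rely on that claim, and since the paper likewise treats the equivalence $X_v\subset\Gamma^X_d(X^u)\Leftrightarrow d\ge\mathrm{dist}_X(u,v)$ as input from the cited literature, the argument stands.
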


\begin{proof}
Let $u,v \in W^P$. Let $d$ be the minimal degree such that $X_v \subset \Gamma^X_d(X^u)$ in $X=G/P$. Equivalently, $ v \leq u \cdot z_dW_P$ in the Bruhat order on $W$. By definition $d=\text{dist}_X(u,v)$ is the minimum degree of a curve joining two opposite Schubert varieties $X^u$ and $X_v$. By Theorem \ref{thm:mindeg} the degree $\text{dist}_X(u,v)$ is the smallest power of $q$ in the quantum product $[X^u] \star [X_v]$. The result follows.
\end{proof}

For type $A$, and the submaximal Grassmanians in types $B$ and $C$, we consider four different sets indexing the Schubert subvarieties: the Weyl group, two sets of partitions, and binary strings. For each of these indexing sets, we construct an operator
\[
I\to I: \delta\mapsto \delta^d
\]
(where $I$ represents any of these indexing sets). The next theorem shows the connection between these constructions, curve neighborhoods, and the minimal quantum degree.

\begin{thm} \label{thm:metathm}
Let $X$ be one of the following: the type $A$ (ordinary) Grassmannian, the symplectic Grassmannian, or the odd orthogonal Grassmannian. Let $\delta\in I$ (one of our four indexing sets for Schubert varieties in $X$). Then
\begin{enumerate}
\item $\Gamma_d(X^\delta) = X^{\delta^d}$

\item for $\delta_1,\delta_2\in I$, the smallest power of $q$ in the quantum product $[X^{\delta_1}]\star [X_{\delta_2}]$ is the smallest integer $d$ such that $\delta_2\leq \delta_1^d$ (in the Bruhat order).
\end{enumerate}
\end{thm}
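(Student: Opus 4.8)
The plan is to prove Theorem \ref{thm:metathm} by reducing both claims, for every indexing set $I$, to a single "master" computation carried out once in the Weyl group, and then transporting it along explicit order-preserving bijections. The backbone is Lemma \ref{prop:mainprop} together with Proposition \ref{recursion}: part (2) of the theorem follows formally from part (1) once we know (a) that $\delta_2 \leq \delta_1^d$ in the relevant indexing set is equivalent to $X_{\delta_2} \subset \Gamma_d(X^{\delta_1})$, and (b) that Lemma \ref{prop:mainprop} identifies the minimal such $d$ with the smallest power of $q$. So the real content is part (1): establishing that the combinatorially defined operator $\cdot^d$ on each indexing set computes the curve neighborhood.

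First I would dispatch the Weyl group case. By the Proposition preceding Definition 2.x, $\Gamma^X_d(X^w) = X^{w \cdot z_d W_P}$, so on the Weyl group $I = W^P$ the operator is simply $w^d := $ the minimal-length representative of $w \cdot z_d W_P$, and Proposition \ref{recursion} gives the recursion $z_d W_P = s_\alpha \cdot z_{d-\alpha^\vee} W_P$ for $\alpha$ a maximal root of $d$, which determines $z_d$ by induction on $d$ starting from $z_0 = e$. Part (1) is then a definition-chase, and part (2) is exactly Lemma \ref{prop:mainprop} combined with the Bruhat-order reformulation $X_v \subset \Gamma^X_d(X^u) \iff v \leq u \cdot z_d W_P$ already used in that proof. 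This handles $I = W^P$ uniformly for all three $X$.

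Next I would handle the three combinatorial indexing sets ($k$-strict partitions $\Lambda$, the partitions $\mathcal{P}'(k,2n)$, and binary strings) by transport of structure. For each such $I$ there is a bijection $\phi_I \colon I \to W^P$ sending $\delta$ to the Weyl-group element indexing the same Schubert variety; the operator $\delta \mapsto \delta^d$ is, by construction (as announced in Subsection \ref{subsection:mindegpointofview} and made explicit in the later sections), defined precisely so that $\phi_I(\delta^d) = \phi_I(\delta)^d$. Granting that, part (1) for $I$ is immediate from part (1) for $W^P$, since $X^{\delta^d} = X^{\phi_I(\delta^d)} = X^{\phi_I(\delta)^d} = \Gamma_d(X^{\phi_I(\delta)}) = \Gamma_d(X^\delta)$. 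For part (2) one additionally needs that the combinatorial order on $I$ (partition inclusion for $\Lambda$ and $\mathcal{P}'(k,2n)$, the coordinatewise or dominance order on binary strings) corresponds under $\phi_I$ to the Bruhat order on $W^P$ — this is where Proposition \ref{prop:partin} enters for $\mathcal{P}'(k,2n)$, and the analogous compatibility statements for $\Lambda$ (in the cominuscule case $k=n$, per Theorem \ref{thm:visthmmaxbc}) and for binary strings. With that in hand, part (2) transports verbatim.

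The main obstacle is not in this formal scaffolding but in verifying that the explicitly given recursive formulas for $\cdot^d$ on the combinatorial models — e.g. $\lambda^1 = (\lambda_2 - 1 \geq \cdots \geq \lambda_k - 1 \geq 0)$ for $\IG$ in the $\mathcal{P}'(k,2n)$ model — genuinely match $\phi_I(\delta)^d$. This amounts to identifying, in each model, the maximal root $\alpha$ of the degree-$1$ class, computing the Hecke product $s_\alpha \cdot z_1 W_P$ and then $w \cdot z_1 W_P$ combinatorially, and checking the bookkeeping survives iteration $d \mapsto d-\alpha^\vee$; the $k$-strict versus $\mathcal{P}'$ discrepancy (codimension is read off correctly by $\Lambda$ but inclusion only by $\mathcal{P}'$) means this must be done carefully and separately in each model. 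That verification is exactly the work of Sections \ref{sect:typeA} and \ref{sect:typeCpostnikov} and of Theorems \ref{Thm:Ccrv} and \ref{Thm:Bcrv}; here I would simply cite those theorems as the model-by-model inputs and assemble them into the uniform statement above.
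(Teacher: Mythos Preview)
Your proposal is correct and follows essentially the same approach as the paper: reduce part (2) to part (1) via Lemma \ref{prop:mainprop}, and prove part (1) by showing that in each indexing set the combinatorially defined $\delta^d$ corresponds to the minimal representative of $\delta \cdot z_d W_P$, with the actual verification deferred to the type-by-type Propositions \ref{prop:arecursion}, \ref{prop:crecursion}, \ref{prop:brecursion} (and the dictionary Propositions \ref{prop:C}, \ref{prop:B}). One small over-elaboration: the theorem as stated uses the Bruhat order on $I$ (pulled back via the bijection), not partition inclusion, so the order-compatibility step you describe via Proposition \ref{prop:partin} is not needed here; that step belongs to the later Theorems \ref{thm:visthmmaxbc} and \ref{thm:visthm}, which restate the criterion in terms of Young-diagram containment.
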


The proof for part (2) follows from part (1) and Lemma \ref{prop:mainprop}. The proof for part (1) follows from the fact $\delta^d = \delta\cdot z_d$. We prove this by explicitly identifying the necessary maximal coroots in each type; for type $A$, we do this in Proposition \ref{prop:arecursion}, for type C in Proposition \ref{prop:crecursion}, and type $B$ in Proposition \ref{prop:brecursion}.

\section{Cominuscule Grassmannians}\label{sect:typeA}
The purpose of this section is to restate Postnikov's \cite{postnikov:qbruhat, postnikov:affine} and Fulton and Woodward's \cite{FW} minimum degree result in the context of curve neighborhoods and Young diagrams for some cominuscule varieties. The cominuscule varieties are the type A Grassmannians $\Gr(k,n)$, the Lagrangian Grassmannians $\IG(n,2n)$, the maximal orthogonal Grassmannians $\OG(n,2n+1)$, the quadric hypersurfaces, the Cayley plane $E_6/P_6$, and the Freudenthal variety $E_7/P_7$. In this paper we prove a Young diagram rule to calculate minimum degrees for the cominuscule Grassmannians, that is, $\Gr(k,n)$, $\IG(n,2n)$, and $\OG(n,2n+1)$. The curve neighborhoods for the cominuscule cases are calculated in \cite[Subsection 3.2, Table 1]{BCMP:eqkt} (See also \cite[Lemma 4.2]{BCMP:qkfin} and \cite{CMP}). We will discuss the type A case in its entirety since the proofs of the submaximal cases in types B and C will follow a similar structure. Then the cominuscule cases in types B and C will be considered in Subsection \ref{subs:BandC}.
\subsection{Grassmannian}
Let $\Gr:=\Gr(k,n)$ be the Grassmannian of $k$-dimensional subspaces in $\mathbb{C}^n$. The notation in this subsection is strictly for type A. We begin by defining three indexing sets---permutations, partitions, and 01-words. This is followed by Lemma \ref{lem:TypeAbij} that states the bijections between the index sets.

\begin{defn} We define the set of partitions $\mathcal{P}(k,n)$. \[ \mathcal{P}(k,n)= \{( \lambda_1 \geq \cdots \geq \lambda_k): \mbox{ where } n-k \geq \lambda_1 \mbox{ and } \lambda_k \geq 0 \}.\]
\end{defn}

Let $P$ be the maximal parabolic obtained by excluding the reflection $s_k$. Then the minimal length representatives $W^{P}$ have the form \[ (u(1)<u(2)<\cdots<u(k)\mid u(k+1)< \cdots < u(n)) \in W^{P} \subset S_n.\] Since the last $n-k$ labels are determined from the first, we will identify an element in $W^{P}$ with the sequence $(u(1)<u(2)<\cdots<u(k))$.

\begin{defn}
Let $W_{01}$ denote the set of 01-words with $k$ ones and $n-k$ zeros.
\end{defn}

\begin{lemma}\label{lem:TypeAbij} \leavevmode
\begin{enumerate}
\item There is a natural bijection between the minimal representatives of $W^{P}$ and the set $W_{01}$ of 01-words with $n-k$ zeros and $k$ ones. It is given by the following: The element $u \in W^{P}$ corresponds to the word where the ones appear in the $u(1),u(2), \cdots, u(k)$ positions reading left to right.
\item There is a natural bijection between $W_{01}$ and $\mathcal{P}(k,n)$. It is given as follows: Produce the partition $\lambda \in \mathcal{P}(k,n)$ by reading the 01-word from left to right. Starting at the top-right corner, proceed south-west by moving left for each 0, and down for each 1.
\end{enumerate}
\end{lemma}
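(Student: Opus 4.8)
The plan is to verify both bijections by describing explicit inverse maps and checking that the domains and codomains match up in cardinality and in the combinatorial constraints.

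For part (1), I would first observe that an element $u \in W^P$ is completely determined by the strictly increasing sequence $(u(1) < u(2) < \cdots < u(k))$, which is simply a choice of a $k$-element subset of $\{1, 2, \ldots, n\}$. On the other side, a $01$-word with $k$ ones and $n-k$ zeros is equivalently the data of which $k$ of the $n$ positions carry a one. So the map sending $u$ to the word whose ones sit in positions $u(1), \ldots, u(k)$ is visibly a bijection between $k$-subsets of $\{1, \ldots, n\}$ and such $01$-words; the inverse reads off the positions of the ones in increasing order. I would also remark (or cite the standard fact) that this identification is compatible with length: $\ell(u)$ equals the number of inversions, which matches the number of pairs (zero to the left of a one) in the associated word, so the bijection is the natural one used throughout Schubert calculus.

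For part (2), the map takes a $01$-word and traces a lattice path starting from the top-right corner of the $k \times (n-k)$ rectangle, stepping left on each $0$ and down on each $1$, reading the word left to right. Since there are exactly $k$ ones (hence $k$ downward steps) and $n-k$ zeros (hence $n-k$ leftward steps), the path goes from the top-right corner of the rectangle to the bottom-left corner, and the region it cuts out above and to the left is exactly the Young diagram of a partition $\lambda$ fitting inside the $k \times (n-k)$ box, i.e. an element of $\mathcal{P}(k,n)$. The inverse map reads the boundary path of $\lambda \in \mathcal{P}(k,n)$ from the top-right corner to the bottom-left corner and records a $0$ for each left step and a $1$ for each down step; fitting inside the box guarantees exactly $n-k$ zeros and $k$ ones. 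These two constructions are mutually inverse by inspection, so the map is a bijection. (One can note for later use that it is order-reversing from the Bruhat order to partition containment, but that is not needed for the statement as given.)

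The content here is entirely bookkeeping: there is no real obstacle, only the need to state the inverse maps cleanly and to confirm the counting constraints ($k$ ones, $n-k$ zeros) line up with both the "strictly increasing $k$-tuple in $\{1,\ldots,n\}$" description of $W^P$ and the "partition inside a $k \times (n-k)$ box" description of $\mathcal{P}(k,n)$. If anything requires a word of care, it is fixing the orientation conventions for the boundary path (starting corner, and which of "left" versus "down" corresponds to $0$ versus $1$) so that the two halves of the lemma compose correctly; I would pin these down with the small picture already used in the running examples.
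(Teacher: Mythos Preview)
Your proposal is correct and is exactly the standard argument; the paper in fact states this lemma without proof, treating both bijections as well-known combinatorial dictionaries, so your explicit description of the inverse maps simply fills in what the authors leave to the reader.
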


In type A, there is a correspondence between partitions in the $k\times (n-k)$ rectangle and Grassmannian permutations with descent at $k$. Given such a partition $\lambda$, define
\[
u(\lambda)[i] = i + \lambda(k+1-i)
\]
for $i=1,2,\ldots,k$. For reasons that will become apparent in type C, we would like permutations to be indexed homologically with partitions indexed cohomologically (i.e. $|\lambda| = \text{codim}(X)$ and $\ell(u) = \text{dim}(X)$ for the same Schubert variety $X$). For this reason, we would rather use a correspondence between $\lambda$ and $u_{\lambda} := u(\lambda^{\vee})$. 

The following definitions capture the combinatorics of curve neighborhoods in terms of partitions. The first and third definitions correspond to the line neighborhood of the corresponding Schubert variety, and the second and fourth correspond to the degree $d$ curve neighborhood of the corresponding Schubert variety.

\begin{defn}\label{def:typeAcombforcurve}\leavevmode
\begin{enumerate}
\item Let $\lambda \in \mathcal{P}(k,n)$. Then define $\lambda^1=(\lambda_2-1 \geq \cdots \geq \lambda_k-1 \geq 0) \in \mathcal{P}(k,n)$ where $-1$'s are replaced by $0$.

\item Define $\lambda^d=( \cdots ((\lambda^1)^1) \cdots)^1$ ($d$-times).

\item Let $u=(u(1)<u(2)<\cdots<u(k)) \in W^{P}$. Define $u^d$ in the following way.

\begin{enumerate}
\item For the case $d=1$.
\begin{enumerate}
\item If $u(k)<n$ then define $u^1:=\left(u(2)<u(3)<\cdots<u(k)< n\right)$;
\item If  \[ u=\left(u(1)<u(2)<\cdots <u(k-j_0-1)<\overbrace{n-j_0}^{u(k-j_0)}<\cdots<\overbrace{n-1}^{u(k-1)}< \overbrace{n}^{u(k)}\right)\] where $u(k-i)=n-i \mbox{ for } 0 \leq i \leq j_0$ and $u(k-j_0-1)+1<n-j_0$ then define \[u^1=\left(u(2)<\cdots <\overbrace{u(k-j_0-1)}^{u^1(k-j_0-2)}< \overbrace{n-j_0-1}^{u^1(k-j_0-1)}<\overbrace{n-j_0}^{u^1(n-j_0)}<\cdots<n-1< n\right). \]
\end{enumerate}

\item If $d>1$ then define $u^d$ recursively by $(u^{d-1})^1$.
\end{enumerate}

\item Let $\gamma \in W_{01}$ and define $\gamma^1$ by: begin by deleting the first 1 occurring in $\gamma$, next replace each $10$-string with a $01$-string, and finally append 1 to the end of the resulting word.

\item Define $\gamma^d$ recursively by $\gamma^d = (\gamma^{d-1})^1$.
\end{enumerate}
\end{defn}

At this point, we have three combinatorial objects that are related to Schubert varieties: permutations, partitions, and 01-words. We also have bijections between these related objects. We now show that these bijections are actually ``dictionaries'' which also preserve the combinatorics of curve neighborhoods.

\begin{prop} \label{prop:TypeAcnbdcorres}
 Suppose $\lambda \in \mathcal{P}(k,n)$, $u \in W^{P}$, and $\gamma \in W_{01}$ are in bijection with one another. Then $\lambda^d$, $u^d$, and $\gamma^d$ are in bijection with one another.
\end{prop}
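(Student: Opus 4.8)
The plan is to prove the statement by establishing it first for the bijection between $W_{01}$ and $\mathcal{P}(k,n)$, and then separately for the bijection between $W^P$ and $W_{01}$ (or equivalently $\mathcal{P}(k,n)$), after which the general claim follows by composing. In each case it suffices, by the recursive definitions of $\lambda^d$, $u^d$, and $\gamma^d$, to treat the base case $d=1$; the case $d>1$ then follows by an immediate induction, since all three operators are defined by iterating the $d=1$ operator. So the real content is: if $\gamma \leftrightarrow \lambda$ then $\gamma^1 \leftrightarrow \lambda^1$, and if $u \leftrightarrow \gamma$ then $u^1 \leftrightarrow \gamma^1$.

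For the $01$-word / partition correspondence, recall from Lemma \ref{lem:TypeAbij}(2) that the boundary path of $\lambda$ is read off from $\gamma$ by moving left on a $0$ and down on a $1$ (reading left to right). First I would describe $\lambda^1$ directly as a path operation: passing from $\lambda$ to $\lambda^1=(\lambda_2-1\geq\cdots\geq\lambda_k-1\geq 0)$ removes the first row and shifts every remaining row in by one box, which on the boundary word amounts to deleting the first ``down'' step (the $1$ that starts $\lambda$'s first row), converting each subsequent ``$10$'' corner into ``$01$'' (this is exactly the effect of decrementing each $\lambda_i$ by $1$ while keeping the ordering), and appending a final ``down'' step to account for the new bottom row of $0$'s. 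This matches verbatim the definition of $\gamma^1$ in Definition \ref{def:typeAcombforcurve}(4): delete the first $1$, replace each $10$ by $01$, append a $1$. The cleanest way to make the middle step precise is to note that the multiset of ``$10$'' factors in the word records the positions where consecutive parts strictly decrease, and decrementing all parts by one does not change which consecutive pairs are strictly decreasing, while a pair that was equal stays equal and a pair where the smaller part was $0$ simply drops out. I would write this as a short lemma with a picture.

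For the permutation / $01$-word correspondence, recall from Lemma \ref{lem:TypeAbij}(1) that $u\in W^P$ corresponds to the word with $1$'s in positions $u(1)<\cdots<u(k)$. The two cases in Definition \ref{def:typeAcombforcurve}(3a) are precisely the two cases one sees on the word side: case (i), $u(k)<n$, means the word ends in a $0$; deleting the first $1$ and appending a $1$ then exactly realizes the positional shift $u(i)\mapsto u(i+1)$ for $i<k$ and $u(k)\mapsto n$, provided no ``$10\mapsto 01$'' swaps occur — but when the word ends in $0$ the only effect of the swaps could be to move the appended... here one must check the swaps interact correctly, which is why the second case is separated out. Case (ii), where $u$ ends with the block $n-j_0,\ldots,n$, means the word ends in a string of $1$'s; now deleting the first $1$, performing the $10\mapsto 01$ swaps, and appending a $1$ produces exactly the permutation displayed in Definition \ref{def:typeAcombforcurve}(3a)(ii). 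I would verify both cases by tracking the positions of the $1$'s explicitly under the word operation and comparing with the stated formula for $u^1$.

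The main obstacle I expect is the bookkeeping in the permutation case, case (ii): making sure that the sequence of ``$10\to 01$'' transpositions, applied to a word that may have a complicated pattern of trailing $1$'s and internal corners, produces exactly the permutation written out in Definition \ref{def:typeAcombforcurve}(3a)(ii), including getting the indices $k-j_0-1$, $k-j_0-2$ right and handling the hypothesis $u(k-j_0-1)+1 < n-j_0$ correctly (this hypothesis guarantees there really is a ``$0$'' available to swap the trailing block past). Everything else is a routine check once the right picture is in hand, and since all the operators are iterates of their $d=1$ versions, no separate argument is needed for $d>1$.
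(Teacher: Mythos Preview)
Your outline is correct and would eventually work, but it takes a genuinely different and more laborious route than the paper, and your own hesitation about the ``$10\mapsto 01$'' swaps in case (i) is a sign that the case analysis is messier than you expect.

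The paper proceeds exactly as you do for $\lambda^1\leftrightarrow\gamma^1$ (it simply says ``by definition''), but for $u^1\leftrightarrow\gamma^1$ it bypasses the case split of Definition \ref{def:typeAcombforcurve}(3a) entirely. The key observation is that, in \emph{both} cases (i) and (ii), the permutation $u^1$ corresponds to the word obtained from $\gamma$ by interchanging the position of the \emph{first $1$} and the \emph{last $0$}. (In case (i) the last $0$ sits at position $n$; in case (ii) it sits at position $n-j_0-1$, immediately left of the trailing block of $1$'s.) Once you see this, the problem reduces to checking that the three-step recipe for $\gamma^1$ (delete first $1$, do the $10\mapsto 01$ replacements, append a $1$) also produces this swap. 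The paper does this with a single decomposition $\gamma=\textbf{0}.1.w.0.\textbf{1}$ (leading zeros, the first $1$, the middle, the last $0$, trailing ones), after which one sees that deleting the first $1$ and appending a final $1$ already gives the correct leading and trailing blocks, and the $10\mapsto 01$ replacement step carries the subword $w.0$ to $0.w$. This handles both of your cases (i) and (ii) at once and eliminates the index bookkeeping you were worried about.

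By contrast, your case-by-case plan requires you to track exactly how the $10\mapsto 01$ replacements interact with the internal structure of $\gamma$ separately in each case; your parenthetical ``provided no $10\mapsto 01$ swaps occur'' in case (i) is actually false in general (e.g.\ $\gamma=1010$ already needs a swap), so that branch needs more care than you indicate. None of this is fatal, but the paper's swap characterization is the cleaner organizing idea.
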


\begin{proof}
Observe that $\lambda^1$ corresponds to $\gamma^1$ by definition. So, it suffices to show that $u^1$ corresponds to $\gamma^1$. Using the correspondence between permutations and 01-words, it is clear that $u^1$ corresponds to a 01-word obtained from $\gamma(u)$ by interchanging the first 1 and the last 0 occurring in $\gamma(u)$. Thus, the problem is further simplified to: show that $\gamma^1$ is  constructed by interchanging the first 1 and the last 0 in $\gamma$. 

Write
\[
\gamma = \textbf{1}.0.u.1.\textbf{0}
\]
where $\textbf{0}$ is a string of all zeros, and similar for $\textbf{1}$. The procedure defining $\gamma^1$ produces
\[
\gamma^1 = 1.\textbf{1}.u.0.\textbf{0} = \textbf{1}.1.u.0.\textbf{0}
\]
(since replacing all $10$-strings with $01$-strings results in $u.0$ being replaced by $0.u$).
\end{proof}

\begin{example}
Let $n=12$ and $k=4$. Consider $u=(3<4<6<8) \in W^{P}.$ Then $u^1=(4<6<8<12)$, $\gamma=001101010000$, and $\gamma^1=000101010001$, and \[ \lambda=\yng(6,6,5,4) \text{ and } \lambda^1=\yng(5,4,3,0). \]
\end{example}

We next specialize Proposition \ref{recursion} to type A. Let \[ \{t_i-t_j :1 \leq i<j \leq n \}\]  be the set of positive roots where \[ \{t_i-t_{i+1}: 1 \leq i \leq n-1 \} \] is the set of simple roots. 
\begin{prop} \label{prop:arecursion} We have the following statements.
\begin{enumerate}
\item  The root is $t_1-t_n$ is a maximal root of any degree $d>0$ and \[ z_1=s_{t_1-t_n}=s_1s_2 \ldots s_{n-2}s_{n-1}s_{n-2} \ldots s_2 s_1.\] 
\item Let $u \in W^{P}$ and $d>0$. Then
\begin{enumerate}
\item $z_d=z_1 \cdot \ldots \cdot z_1$ ($d$-times);
\item $u^1$ is the minimal length coset representative of $u \cdot z_1W_P$;
\item $u^d$ is the minimal length coset representative of $u^{d-1} \cdot z_1 W_P$;
\item $u^d$ is the minimal length coset representative of $u \cdot z_d W_P$.
\end{enumerate}
\end{enumerate}
\end{prop}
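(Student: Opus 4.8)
The plan is to establish Proposition \ref{prop:arecursion} in two stages, corresponding to the two numbered items. For part (1), I would first recall that for the Grassmannian $\Gr(k,n)$ with $P$ the maximal parabolic omitting $s_k$, the simple coroots in $\Delta_P^\vee$ are all of $\Delta^\vee$ except $\alpha_k^\vee$. Hence the condition $\beta^\vee + \Delta_P^\vee \le d$ for $d = d_k[X_{s_k}]$ with $d_k \ge 1$ reduces to requiring that the coefficient of $\alpha_k^\vee$ in $\beta^\vee$ be at most $d_k$, i.e. at most $1$ when we care about the smallest positive degree. Among roots $\beta = t_i - t_j$ ($i<j$), the coefficient of $\alpha_k^\vee$ equals $1$ exactly when $i \le k < j$, and the unique maximal such root (in the root order, equivalently the one whose reflection is longest) is $t_1 - t_n$. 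I would then verify the reduced-word expression $s_{t_1-t_n} = s_1 s_2 \cdots s_{n-1} \cdots s_2 s_1$ by the standard computation that conjugating $s_{n-1}$ successively by $s_{n-2}, \ldots, s_1$ on one side and $s_{n-2}, \ldots, s_1$ on the other produces the transposition $(1\,n)$; checking this has length $2n-3$ confirms it is reduced. Finally, $\Gamma_1^X(1.P) = X(z_1 W_P)$ together with Proposition \ref{recursion} applied with $d = [X_{s_k}]$ and $\alpha = t_1 - t_n$ gives $z_1 W_P = s_{t_1-t_n} \cdot z_0 W_P = s_{t_1-t_n} W_P$, so $z_1 = s_{t_1-t_n}$ as the minimal coset representative.

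For part (2), item (a) is an immediate induction: since $t_1 - t_n$ is a maximal root of every degree $d>0$, Proposition \ref{recursion} gives $z_d W_P = s_{t_1-t_n} \cdot z_{d-1} W_P = z_1 \cdot z_{d-1} W_P$, and unwinding the recursion yields $z_d = z_1 \cdot z_1 \cdots z_1$ ($d$ times) as Hecke products. Items (b), (c), (d) are the heart of the matter. For (b) I would compute $u \cdot z_1 W_P$ directly in the 01-word / permutation model: using Proposition \ref{prop:TypeAcnbdcorres} (or rather its proof), the Hecke action of $z_1 = s_1 \cdots s_{n-1} \cdots s_1$ on the coset $u W_P$ is precisely the operation that interchanges the first $1$ and the last $0$ in the 01-word of $u$ — intuitively, applying $s_1, \ldots, s_{n-1}$ pushes a $1$ from position $1$ toward the right end (the Hecke product "absorbing" steps that would decrease length, which happen exactly at positions already occupied by $1$s), and the return sweep $s_{n-2}, \ldots, s_1$ cleans up. This matches the description of $\gamma^1$ verified in Proposition \ref{prop:TypeAcnbdcorres}, and hence by the bijections matches $u^1$. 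Then (c) is the definition $u^d := (u^{d-1})^1$ combined with (b) applied to $u^{d-1}$, and (d) follows by induction from (a), (b), (c): $u^d$ represents $u^{d-1} \cdot z_1 W_P = (u \cdot z_{d-1}) \cdot z_1 W_P = u \cdot (z_{d-1} \cdot z_1) W_P = u \cdot z_d W_P$, using associativity of the Hecke monoid.

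The main obstacle I expect is item (2b): carefully justifying that the Hecke product $u \cdot z_1 W_P$, with $z_1$ given by the long palindromic word $s_1 \cdots s_{n-1} \cdots s_1$, equals the coset whose minimal representative is $u^1$. One must track where ordinary products $w s_i$ occur versus where the Hecke product stabilizes ($w \cdot s_i = w$), and this depends delicately on the positions of the $1$s in the word of $u$; the cleanest route is probably to prove the auxiliary claim that for a 01-word $\gamma$, the left Hecke action of $s_i \cdots s_{n-1}$ moves the leftmost $1$ lying weakly left of position $i$ out to position $n$ while shifting intervening $1$s left by one, then that the action of $s_{n-2} \cdots s_1$ restores all but records the net effect as "first $1$ deleted, last $0$ becomes $1$". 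Alternatively, one can sidestep the explicit word and argue abstractly: $z_1 W_P$ is the maximal coset with $\pi_\beta$-distance $1$ and is the unique Schubert variety equal to the degree-one curve neighborhood of the point, so $u \cdot z_1 W_P = \Gamma_1^X(X^u)$, and then identify this curve neighborhood with $X^{u^1}$ using the known cominuscule curve-neighborhood formula from \cite[Subsection 3.2, Table 1]{BCMP:eqkt}; this converts the obstacle into a bookkeeping check that Table 1's description agrees with Definition \ref{def:typeAcombforcurve}. I would likely present the abstract argument as the main line and relegate the explicit 01-word computation to a remark.
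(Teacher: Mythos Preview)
Your proposal is correct and follows essentially the same approach as the paper: identify $t_1-t_n$ as maximal for every $d>0$ because its coroot has all coefficients one, deduce $z_d = z_1^{\cdot d}$ from Proposition~\ref{recursion}, handle (2b) by the direct Hecke computation (the paper's proof simply says ``a direct calculation shows''), and then obtain (2c)--(2d) from the recursive definition $u^d=(u^{d-1})^1$. The only minor divergence is your stated preference for the abstract route via the cominuscule curve-neighborhood table in \cite{BCMP:eqkt}; the paper instead leaves (2b) as the bare direct calculation, so if you want to match its style you should foreground the explicit Hecke/01-word computation rather than relegate it to a remark.
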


We will give an example before we prove Proposition \ref{prop:arecursion}.
\begin{example}
Consider $u=2413$ in the Weyl group of $\Gr(2,4)$. Then by the definition of the Hecke product we have that
\[
u \cdot z_1 = 2413\cdot s_1s_2s_3s_2s_1= 4213\cdot s_2s_3s_2s_1= 4213\cdot s_3s_2s_1= 4231\cdot s_2s_1= 4321\cdot s_1=4321.
\]
Notice that $u^1=(3<4|1<2)$ is the minimum length coset representative of $u \cdot z_1 W_P$.
\end{example}

\begin{proof}
First observe the following standard facts. The highest root is $t_1-t_n$ and its coroot is $t_1-t_n$. Then, \[t_1-t_n=(t_1-t_2)+\cdots+(t_{n-1}-t_n). \] In particular, each coefficient is one and $\alpha:=t_1-t_n \in R^+\backslash R^+_P$ is a maximal root for any $d>0$. By Proposition \ref{recursion}, $s_{\alpha} \cdot z_{d-\alpha^\vee}W_P=z_dW_P$, $z_1=s_{\alpha}$, and $z_d=z_1 \cdot \ldots \cdot z_1$ ($d$-times). A direct calculation shows that $u^1$ is the minimal length coset representative of $u \cdot z_1W_P$. Finally, part 3b of Definition \ref{def:typeAcombforcurve} indicates that $u^d=(u^{d-1})^1$. So, $u^d$ is the minimal length coset representative of $u^{d-1} \cdot z_1 W_P$. The result follows.
\end{proof}

We can now state the Young diagram formula for type A.
\begin{thm} \label{thm:visthma}
For any pair of Schubert classes $[X^\lambda], [X_\mu] \in \QH^*(\Gr(k,n))$ where $\lambda, \mu \in \mathcal{P}(k,n)$, the smallest degree $d$ such that $q^d$ appears in  $[X^\lambda] \star [X_\mu]$ with nonzero coefficient is the smallest integer $d$ such that $\lambda^d \subset \mu$.
\end{thm}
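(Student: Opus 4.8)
The plan is to reduce Theorem~\ref{thm:visthma} to a combination of Lemma~\ref{prop:mainprop}, Proposition~\ref{prop:arecursion}, and the bijection dictionaries of Proposition~\ref{prop:TypeAcnbdcorres}. By Lemma~\ref{prop:mainprop}, the smallest power of $q$ occurring in $[X^\lambda]\star[X_\mu]$ equals the smallest degree $d$ such that $X_\mu\subset\Gamma_d^{\Gr}(X^\lambda)$, which in turn is the smallest $d$ such that $u_\mu\leq u_\lambda\cdot z_d W_P$ in the Bruhat order on $W/W_P$ (here I use the homological indexing $u_\lambda=u(\lambda^\vee)$ introduced just before Definition~\ref{def:typeAcombforcurve}). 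So the theorem is proved once I establish two things: first, that $\Gamma_d^{\Gr}(X^\lambda)=X^{\lambda^d}$ as Schubert varieties, i.e.\ that the partition operator $\cdot^d$ correctly computes the curve neighborhood; and second, that for partitions in $\mathcal{P}(k,n)$, Bruhat containment of the associated (homologically indexed) permutations is equivalent to partition inclusion $\lambda^d\subset\mu$.

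For the first point, Proposition~\ref{prop:arecursion}(2d) already identifies $u^d$ as the minimal-length coset representative of $u\cdot z_dW_P$, so by the Proposition preceding Definition~\ref{recursion} (``$\Gamma_d^X(X^w)=X^{w\cdot z_dW_P}$''), we get $\Gamma_d^{\Gr}(X^u)=X^{u^d}$. Then Proposition~\ref{prop:TypeAcnbdcorres} says that if $\lambda$, $u$, $\gamma$ are in bijection, so are $\lambda^d$, $u^d$, $\gamma^d$; hence $\Gamma_d^{\Gr}(X^\lambda)=X^{\lambda^d}$ with $\lambda^d\in\mathcal{P}(k,n)$. One must be slightly careful about which of the two partition conventions ($u(\lambda)$ versus $u_\lambda=u(\lambda^\vee)$) makes the operator $\cdot^d$ on partitions match the operator $\cdot^d$ on permutations; this amounts to checking on the bijections of Lemma~\ref{lem:TypeAbij} that the ``delete-first-1, slide $10\to01$, append-1'' move on 01-words corresponds to the ``$\lambda\mapsto(\lambda_2-1\geq\cdots\geq\lambda_k-1\geq0)$'' move on partitions, which is exactly what Proposition~\ref{prop:TypeAcnbdcorres} proves. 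This is bookkeeping, not an obstacle.

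For the second point, the standard fact is that for $u,v\in W^P$ with associated partitions $\alpha(u),\alpha(v)$ (indexing Schubert varieties by codimension), one has $u\leq v$ in Bruhat order on $W/W_P$ if and only if $\alpha(v)\subset\alpha(u)$; equivalently, passing to the complementary/homological convention, $X_\mu\subset X^\lambda$ (as opposite and ordinary Schubert varieties sharing an indexing partition) translates to the inclusion $\lambda\subset\mu$ after the appropriate $\vee$. I would cite this as a classical property of the Grassmannian Bruhat order (it follows from Lemma~\ref{lem:TypeAbij}(2): the 01-word order refining inclusion of the south-west lattice paths is precisely the Bruhat order), rather than reprove it. Combining: the smallest $d$ with $u_\mu\leq u_\lambda\cdot z_dW_P$ is the smallest $d$ with $u_\mu\leq u_{\lambda^d}$, which is the smallest $d$ with $\lambda^d\subset\mu$.

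The main obstacle I anticipate is purely notational rather than mathematical: reconciling the two partition conventions and the homological-versus-cohomological indexing so that all the inclusions point the right way, and making sure the degree-$1$ case of the permutation operator $u^1$ in Definition~\ref{def:typeAcombforcurve}(3) genuinely agrees with ``interchange the first $1$ and the last $0$'' in every edge case (e.g.\ when $u(k)=n$ already, so that a block of consecutive values $n-j_0,\dots,n$ sits at the end). That edge-case analysis is handled in the proof of Proposition~\ref{prop:TypeAcnbdcorres}, so here I would simply invoke it. Once the dictionaries are in place, the theorem is a one-line consequence of Lemma~\ref{prop:mainprop} and Proposition~\ref{prop:arecursion}, and in fact is subsumed by the meta-statement Theorem~\ref{thm:metathm}(2) specialized to the partition indexing set.
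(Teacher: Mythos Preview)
Your proposal is correct and follows essentially the same route as the paper: the paper does not give a standalone proof of Theorem~\ref{thm:visthma} but treats it as the type~A instance of Theorem~\ref{thm:metathm}, whose proof is exactly the chain you describe---Lemma~\ref{prop:mainprop} reduces the minimum degree to a curve-neighborhood containment, Proposition~\ref{prop:arecursion} identifies $u^d$ with $u\cdot z_dW_P$, Proposition~\ref{prop:TypeAcnbdcorres} transports this to partitions, and the classical equivalence of Bruhat order with partition inclusion on $\mathcal{P}(k,n)$ finishes the job. Your caution about the homological/cohomological conventions is well placed but, as you note, is bookkeeping already absorbed into the dictionary propositions.
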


\begin{example} \label{ex:A}
For $\Gr(5,16)$ consider $\lambda=(11 \geq 11 \geq 11 \geq 4 \geq 4) \in \mathcal{P}(5,16)$ and $\mu=(7,7) \in  \mathcal{P}(5,16).$ Then we have that 

\[ \lambda=\yng(11,11,11,4,4) \mbox{ and } \mu=\yng(7,7)\]
\begin{align*}
\lambda^1 &= \yng(10,10,3,3)\\
\lambda^2 &= \yng(9,2,2)\\
\lambda^3 &= \yng(1,1).
\end{align*}
Thus, 3 is the smallest degree $d$ such that $q^d$ appears in  $[X^\lambda] \star [X_\mu]$ with nonzero coefficient.
\end{example}

\subsection{The Lagrangian Grassmannian and the maximal odd orthogonal Grassmannian} \label{subs:BandC}

We will first define the isotropic Grassmannians in types B and C. In this subsection we will only consider the cominuscule case for each type.  The remainder of the paper will be dedicated to the submaximal cases in types B and C.

Fix the vector space $\mathbb{C}^{2n}$ with a non-degenerate skew-symmetric bilinear form $(\cdot\,,\cdot)$, and fix a non-negative integer $k$. The symplectic Grassmannian $\IG(k,2n)$ parameterizes $k$-dimensional isotropic subspaces of $\mathbb{C}^{2n}$. Similarly, consider the vector space $\mathbb{C}^{2n+1}$ with a non-degenerate symmetric bilinear form. The odd orthogonal Grassmannian $\OG(k,2n+1)$ parameterizes $k$-dimensional isotropic subspaces of $\mathbb{C}^{2n+1}$.

The notation used in this section is only for the cominuscule cases. Let \[X \in \{ \IG(n,2n), \OG(n,2n+1) \}.\] The Schubert varieties are indexed by the set of strict partitions \[\Lambda:=\{(\lambda_1 \geq \cdots \geq \lambda_n): n \geq \lambda_1,\lambda_k \geq 0, \lambda_i>0 \implies \lambda_i>\lambda_{i+1} \}. \]

The curve neighborhoods are calculated in \cite[Subsection 3.2, Table 1]{BCMP:eqkt}. We state the curve neighborhoods in the next lemma.
\begin{lemma}The following results hold:
\begin{enumerate}
\item Let $X^\lambda_C \subset \IG(n,2n)$ be a Schubert variety for some $\lambda_C \in \Lambda$. Then \[\Gamma_d^\IG(X^{\lambda_C})=X^{\lambda_C^d}\] where \[ \lambda_C^d=(\lambda_{d+1}\geq \lambda_{d+2} \geq \cdots \geq \lambda_k \geq 0 \geq \cdots 0).\]
\item Let $X^{\lambda_B} \subset \OG(n,2n+1)$ be a Schubert variety for some $\lambda_B \in \Lambda$. Then \[\Gamma_d^\OG(X^{\lambda_B})=X^{\lambda_B^d}\] where \[ \lambda_B^d=(\lambda_{2d+1}\geq \lambda_{2d+2} \geq \cdots \geq \lambda_k \geq 0 \geq \cdots 0).\]
\end{enumerate}
\end{lemma}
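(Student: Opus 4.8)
The plan is to treat this lemma in exactly the way the type A case is treated in Proposition~\ref{prop:arecursion} and Theorem~\ref{thm:visthma}: specialize the Buch--Mihalcea recursion of Proposition~\ref{recursion} to the cominuscule spaces $C_n/P_n=\IG(n,2n)$ and $B_n/P_n=\OG(n,2n+1)$, read off $z_d$, and transport the answer through the bijection between $W^P$ and $\Lambda$. One can also simply extract the curve neighborhoods from \cite[Subsection~3.2, Table~1]{BCMP:eqkt} and check that the shifted-diagram model used there is intertwined by the evident dictionary with the strict-partition model $\Lambda$; I comment on this at the end.

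First I would assemble the root data shared by the two types. In both, $P$ omits the single simple root $\alpha_n$, so $\Delta_P=\{\alpha_1,\dots,\alpha_{n-1}\}$ spans an $A_{n-1}$ subsystem, $W_P\cong S_n$, and $H_2(X,\mathbb{Z})\cong\mathbb{Z}$; an effective degree is a nonnegative integer $d$, and the condition $\beta^\vee+\Delta_P^\vee\le d$ just says that the $\alpha_n^\vee$-coefficient of $\beta^\vee$ is at most $d$. The next step is to pin down the maximal roots of $d$. Writing each coroot $\beta^\vee$ with $\beta\in R^+\setminus R_P^+$ in the simple coroot basis and reading off the coefficient of $\alpha_n^\vee$: in type C, where $R^+\setminus R_P^+=\{\,e_i+e_j:i\le j\,\}$, this coefficient is $1$ for the long roots $\beta=2e_i$ and $2$ for the short roots $\beta=e_i+e_j$ with $i<j$; in type B, where $R^+\setminus R_P^+=\{\,e_i+e_j:i<j\,\}\cup\{\,e_i\,\}$, it is $1$ for every $\beta$. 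Hence in both cases, for every $d\ge1$ the set appearing in the definition of the maximal roots has a unique maximal element, the highest root $\theta$, where $\theta=2e_1$ in type C and $\theta=e_1+e_2$ in type B, and $\theta^\vee$ maps to $1$ in $H_2(X,\mathbb{Z})$. Proposition~\ref{recursion} then gives $z_dW_P=(s_\theta\cdot z_{d-1})W_P$, so with $z_0W_P=W_P$ we get $z_dW_P=(s_\theta)^{\cdot d}W_P$ (a $d$-fold Hecke product), exactly as in the type A computation; consequently $\lambda^d=\lambda\cdot z_dW_P$ is obtained by applying the degree-one operator $\lambda\mapsto\lambda\cdot s_\theta W_P$ a total of $d$ times.

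The remaining and delicate step is to translate this degree-one operator into the strict-partition model $\Lambda$: strict partitions inside the staircase $(n,n-1,\dots,1)$, identified with Grassmannian signed permutations in the usual codimension-to-dimension reversing manner. I would compute the minimal-length coset representative of $\lambda\cdot s_\theta W_P$ straight from the definition of the Hecke product, tracking the one-line notation of the signed permutation. The outcome is governed by the coefficient $c$ of the cominuscule node $\alpha_n$ in $\theta$: in type C, $\theta=2e_1=2\alpha_1+\dots+2\alpha_{n-1}+\alpha_n$ has $c=1$, and one obtains that $\lambda\cdot s_\theta W_P$ corresponds to deleting the first part of $\lambda$, i.e. $\lambda^1_C=(\lambda_2\ge\lambda_3\ge\cdots)$; in type B, $\theta=e_1+e_2=\alpha_1+2\alpha_2+\dots+2\alpha_n$ has $c=2$, and two parts are deleted, i.e. $\lambda^1_B=(\lambda_3\ge\lambda_4\ge\cdots)$. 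Iterating $d$ times gives $\lambda^d_C=(\lambda_{d+1}\ge\lambda_{d+2}\ge\cdots)$ and $\lambda^d_B=(\lambda_{2d+1}\ge\lambda_{2d+2}\ge\cdots)$, and then $\Gamma^X_d(X^\lambda)=X^{\lambda\cdot z_dW_P}=X^{\lambda^d}$ by the Hecke-product description of curve neighborhoods.

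The main obstacle is exactly this last translation: following a single Hecke multiplication by the ``big'' reflection $s_\theta$ through the signed-permutation model and matching it with the deletion of $c$ rows of a strict partition --- in particular accounting for the doubling in type B, which is the reflection of the fact that $\alpha_n$ occurs with multiplicity $2$, rather than $1$, in the highest root. Should this bookkeeping become unwieldy, it can be bypassed: take the curve neighborhood of each cominuscule Schubert variety directly from \cite[Table~1]{BCMP:eqkt}, where it is already given in a shifted-Young-diagram model, and then only verify that passing from shifted diagrams to ordinary strict partitions carries that model's curve-neighborhood operator to the operator $\lambda\mapsto\lambda^d$ above. Everything else --- identifying the maximal roots, reducing to the degree-one operator, and iterating --- is routine given the type A template.
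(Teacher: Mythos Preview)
The paper does not prove this lemma at all: it is stated with a bare citation to \cite[Subsection~3.2, Table~1]{BCMP:eqkt}. Your fallback route---reading the curve neighborhoods off that table and matching the shifted-diagram model there with the strict-partition model $\Lambda$---is therefore exactly the paper's approach. Your primary route via Proposition~\ref{recursion} is correct and supplies a self-contained argument where the paper gives none: your root-system bookkeeping (the $\alpha_n^\vee$-coefficients of the relevant coroots, the identification of the highest root $\theta$ as the unique maximal root of every $d\ge1$, and $\theta^\vee\mapsto 1$ so that $z_dW_P=(s_\theta)^{\cdot d}W_P$) is accurate in both types. The only step you rightly flag as needing care is the Hecke computation $\lambda\mapsto\lambda\cdot s_\theta W_P$ in the strict-partition dictionary; this is a short signed-permutation exercise (in type~$C$, $s_{2e_1}$ bars the smallest entry and reducing modulo $W_P=S_n$ pushes it to $\bar1$, deleting one part; in type~$B$, $s_{e_1+e_2}$ bars the two smallest entries, deleting two), and the outcome matches the lemma.
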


We can now state the Young diagram formula for $\IG(n,2n)$ and $\OG(n,2n+1)$.
\begin{thm} \label{thm:visthmaximal}
Let $X \in \{ \IG(n,2n), \OG(n,2n+1)\}$. For any pair of Schubert classes $[X^\lambda], [X_\mu] \in \QH^*(X)$ where $\lambda, \mu \in \Lambda$, the smallest degree $d$ such that $q^d$ appears in  $[X^\lambda] \star [X_\mu]$ with nonzero coefficient is the smallest integer $d$ such that $\lambda^d \subset \mu$.
\end{thm}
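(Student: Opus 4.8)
The plan is to deduce Theorem~\ref{thm:visthmaximal} from the general machinery already assembled, namely Lemma~\ref{prop:mainprop} together with the explicit description of curve neighborhoods recorded in the lemma just above. By Lemma~\ref{prop:mainprop}, for $u,v\in W^P$ the smallest power of $q$ occurring in $[X^u]\star[X_v]$ equals the smallest $d$ with $X_v\subset\Gamma^X_d(X^u)$, equivalently the smallest $d$ with $v\le u\cdot z_dW_P$ in the Bruhat order. So the entire statement reduces to showing that, under the bijection $\Lambda\leftrightarrow W^P$ indexing Schubert varieties, the condition ``$v\le u\cdot z_dW_P$ in the Bruhat order'' is translated exactly into ``$\lambda^d\subset\mu$'' (inclusion of Young diagrams), where $\lambda$ indexes $X^u$ cohomologically and $\mu$ indexes $X_v$ homologically. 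In other words, the heart of the proof is a purely combinatorial statement: partition inclusion on $\Lambda$ is compatible with the Bruhat order on $W^P$, and the operator $\lambda\mapsto\lambda^d$ on strict partitions corresponds to $u\mapsto$ (minimal length representative of $u\cdot z_dW_P$).

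The first step is to recall the standard fact that for the cominuscule spaces $\IG(n,2n)$ and $\OG(n,2n+1)$, the Bruhat order on $W^P$ is exactly containment of strict partitions; this is classical (shifted Young diagrams), and I would cite it rather than reprove it. The second step is to verify that the operator $\lambda\mapsto\lambda^d$ given in the lemma above (deleting the first $d$ parts in type C, the first $2d$ parts in type B, and padding with zeros) matches $\Gamma^X_d(X^\lambda)=X^{\lambda^d}$; but this identification is precisely the content of \cite[Subsection 3.2, Table 1]{BCMP:eqkt} as restated in that lemma, so it may simply be invoked. With these two ingredients in hand, the chain of equivalences is
\begin{align*}
q^d \text{ appears in } [X^\lambda]\star[X_\mu]
&\iff X_\mu\subset\Gamma^X_d(X^\lambda) && \text{(Lemma \ref{prop:mainprop} and }\Gamma \text{ monotone in }d\text{)}\\
&\iff X_\mu\subset X^{\lambda^d} && \text{(curve neighborhood lemma)}\\
&\iff \mu\supseteq\lambda^d && \text{(Bruhat order }=\text{ partition inclusion),}
\end{align*}
and taking the minimal such $d$ gives the theorem. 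One technical point worth spelling out: one must check that the set of degrees $d$ for which $q^d$ appears is ``upward closed'' (if it holds for $d$ it holds for $d+1$), which follows from monotonicity of curve neighborhoods $\Gamma^X_d\subset\Gamma^X_{d+1}$, equivalently from Theorem~\ref{thm:mindeg}; this justifies passing from ``appears for some'' to ``the smallest $d$.''

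The step I expect to be the main obstacle, or at least the one requiring genuine care rather than citation, is the precise bookkeeping between the cohomological indexing of $X^\lambda$ and the homological indexing of $X_\mu$ and making sure the direction of the inclusion $\lambda^d\subset\mu$ comes out correctly --- in particular that no dualization $\mu\mapsto\mu^\vee$ has been silently absorbed, as happens in type A via $u_\lambda:=u(\lambda^\vee)$. Concretely, I would trace through a small example such as $\IG(2,4)$ or $\OG(3,7)$ to confirm that $X_\mu\subset X^\nu$ in $W^P$ translates to $\nu\subseteq\mu$ (not $\mu\subseteq\nu$) with the conventions of the paper, and that the "$d$ first parts are deleted" operator is applied to $\lambda$ and not to $\mu$. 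Once that orientation is pinned down, the proof is essentially a two-line deduction from the already-stated lemmas.
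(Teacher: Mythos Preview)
Your proposal is correct and follows precisely the approach implicit in the paper: the paper does not give a standalone proof of Theorem~\ref{thm:visthmaximal} but instead presents it as an immediate consequence of Lemma~\ref{prop:mainprop} (equivalently Theorem~\ref{thm:metathm}(2)), the curve neighborhood lemma from \cite[Subsection 3.2, Table 1]{BCMP:eqkt}, and the classical fact that for the cominuscule spaces $\IG(n,2n)$ and $\OG(n,2n+1)$ the Bruhat order on $W^P$ coincides with inclusion of strict partitions in $\Lambda$. Your chain of equivalences and your attention to the monotonicity of curve neighborhoods and the orientation of the inclusion are exactly the details one would fill in to make that implicit argument explicit.
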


\begin{example}
Consider the case $\IG(5,10)$. Let $\lambda=(4,3,2,1) \in \Lambda$ and $\mu=(3,1) \in \Lambda$. Then we have that \[\lambda=\yng(4,3,2,1) \mbox{and } \mu=\yng(3,1). \]  We have that \[ \lambda^1=\yng(3,2,1) \mbox{and } \lambda^2=\yng(2,1).\] So the minimum degree $d$ that appears in the product $[X^\lambda] \star [X_\mu]$ is $2$. 
\end{example}
\begin{example}
Consider $\OG(5,11)$. Let $\lambda=(4,3,2,1) \in \Lambda$ and $\mu=(3,1) \in \Lambda$. Then we have that \[ \lambda^1=\yng(2,1). \] So the minimum degree $d$ that appears in the product $[X^\lambda] \star [X_\mu]$ is $1$.
\end{example}

\section{Indexing sets for submaximal Grassmannians in types B and C}\label{sect:combo}



We begin by defining four indexing sets---permutations, two kinds of partitions, and 01-words. The indexing sets for the Schubert varieties of $\IG(k,2n)$ and $\OG(k,2n+1)$ are the same. However, curve neighborhood calculations depend on being in either type C or type B. When ambiguity arises, subscripts on elements of indexing sets will be used to indicate whether they are to be considered in type B or type C for this section and Section \ref{sect:curvenbhds}. Finally, Lemma \ref{lem:TypeBCbij} states the bijections between the index sets.



The associated Weyl group $W$ is the hyperoctahedral group consisting of {\em signed permutations}, i.e. permutations $u$ of the elements $\{1, \cdots, n,\overline{n},\cdots,\overline{1}\}$ satisfying $u(\overline{i})=\overline{u(i)}$ for all $i=1,\ldots,n$. There is a natural ordering 
\[ 
1 < 2 < \ldots < n < \overline{n} < \ldots < \overline{1}.
\]
We define $\bar{i}=2n+1-i$ and $|i|=\min \{i,2n+1-i \}$.

Let $P_k$ be the maximal parabolic obtained by excluding the reflection $s_k$. Then the minimal length representatives $ W^{P}$ have the form 
\[ 
(u(1)<u(2)<\cdots<u(k)\mid u(k+1)< \cdots < u(n) \le n) \in W^{P}
\] 
if $k < n$ and $(u(1)<u(2)<\cdots<u(n))$ if $k= n$. Since the last $n-k$ labels are determined from the first, we will identify an element in $W^{P}$ with the sequence \[ (u(1)<u(2)<\cdots<u(k)).\]

The next two definitions include two kinds of partitions. The first are $(n-k)$-strict partitions. The second is a particular subset of $\mathcal{P}(k,2n)$.

\begin{defn}
Let
\[ 
\Lambda = \{(\lambda_1 \geq \ldots \geq \lambda_k): 2n-k \geq \lambda_1, \lambda_k \geq 0, \mbox{ and if } \lambda_j>n-k \mbox{ then } \lambda_{j+1}<\lambda_j   \}
\] 
denote the set of $(n-k)$-strict partitions.\footnote{ It should be noted that while the Bruhat order is compatible with partition inclusion in type A, the Bruhat orders in types B and C are not computable by using partition inclusion with $k$-strict partitions. Indeed, $(2n-k) \leq (1,1, \cdots, 1)$ in the Bruhat order for $k<n$.} Let $\ell(\lambda)=\max \{j : \lambda_j>0\}$ and $\ell_1(\lambda)=\max \{j : \lambda_j>1\}$.
\end{defn}

The next definition states those partitions and a relevant definition. We use the point of view that the 01-words for $\IG(k,2n)$ and $\OG(k,2n+1)$ are also 01-words for $\Gr(k,2n)$.

\begin{defn}
Let $\lambda \in \mathcal{P}(k,n)$ be a partition. This partition's boundary consists of $n$ steps moving either left or down in the south-west direction. Let $D(\lambda)(i)=0$ if the $i$th step is left and $D(\lambda)(i)=1$ if the $i$th step is down.
\end{defn}

\begin{defn} \label{def:partbruh} 
Define \[ \mathcal{P}'(k,2n)=\{\lambda \in  \mathcal{P}(k,2n): \mbox{ if } D(\lambda)(i)=D(\lambda)(\bar{i}) \mbox{ then } D(\lambda)(i)=0 \}.\] Define $W_{01}$ to be the set of 01-words that correspond to partitions in $\lambda$.
\end{defn}


\begin{example}
The partition $(5,2,1) \in \mathcal{P}'(3,8)$. The corresponding 01-word is 10001010. Pictorially, \[\yng(5,2,1) \in \mathcal{P}'(3,8).  \] On the contrary, the partition $(5,5,1) \notin \mathcal{P}'(3,8)$. The corresponding 01-word is 11000010. Pictorially, \[\yng(5,5,1) \notin \mathcal{P}'(3,8).  \] 
\end{example}

The next lemma is a dictionary between the different combinatorial models. Part (1) is stated and proved in \cite{BKT,BKT2}.
\begin{lemma} \label{lem:TypeBCbij} \leavevmode
\begin{enumerate}
\item There is a bijection between $\Lambda$, the collection of $(n-k)$-strict partitions, and the set $W^{P}$ of minimal length representatives given by:
\begin{align*}
&\lambda \mapsto w& \mbox{ where } w(j)=2n+1-k-\lambda_j+\#\{i<j: \lambda_i+\lambda_j \leq 2(n-k)+j-i \},\\
&w \mapsto \lambda& \mbox{ where } \lambda_j=2n+1-k-w(j)+\# \{i<j: w(i)+w(j)>2n+1 \}.
\end{align*}

\item There is a natural bijection between the minimal length representatives of $ W^{P}$ and the set $ W_{01}$. It is given by the following: The element $w \in W^{P}$ corresponds to the word where the ones appear in the $w(1),w(2), \cdots, w(k)$ positions reading left to right.

\item The set of $W_{01}$ and $\mathcal{P}'(k,2n)$ have a canonical bijection. The set \[\mathcal{P}'(k,2n) \subset  \mathcal{P}(k,2n)\] is the set of partitions where $\lambda \in \mathcal{P}'(k,2n)$ is found by reading the 01-word from left to right. Starting at the top-right corner, proceed south-west by moving left for each 0, and down for each 1.
\end{enumerate}
\end{lemma}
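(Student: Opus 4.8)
\textbf{Proof plan for Lemma~\ref{lem:TypeBCbij}.}

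The plan is to treat the three asserted bijections in turn, with the bulk of the work concentrated in part (3), since parts (1) and (2) are either already in the literature or essentially bookkeeping. For part~(1), I would simply invoke the correspondence between $(n-k)$-strict partitions and the minimal-length cosets $W^{P_k}$ established in \cite{BKT,BKT2}, checking only that the explicit formulas for $\lambda\mapsto w$ and $w\mapsto\lambda$ stated here match theirs (up to our ordering conventions on $\{1<\cdots<n<\bar n<\cdots<\bar 1\}$) and that the two maps are mutually inverse: substitute one formula into the other and verify that the correction terms $\#\{i<j:\lambda_i+\lambda_j\le 2(n-k)+j-i\}$ and $\#\{i<j:w(i)+w(j)>2n+1\}$ cancel. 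That $\ell(w)=|\lambda|$ (so that codimension is read off correctly) is also recorded in \cite{BKT}.

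For part~(2), I would run exactly the argument already used in the type~A case (Lemma~\ref{lem:TypeAbij}(1), Proposition~\ref{prop:TypeAcnbdcorres}): an element $w\in W^{P_k}$ is determined by the increasing sequence $(w(1)<\cdots<w(k))$ of its first $k$ values, all lying in $\{1,\ldots,2n\}$ under the identification $\bar i = 2n+1-i$; placing a $1$ in positions $w(1),\ldots,w(k)$ and $0$ elsewhere gives a length-$2n$ word with $k$ ones, and this is manifestly a bijection onto such words. The only new point relative to type~A is to observe that the image of this map is precisely the set $W_{01}$ as defined in Definition~\ref{def:partbruh}, i.e.\ that the signed-permutation condition $u(\bar i)=\overline{u(i)}$ on a coset representative translates under $\lambda\mapsto D(\lambda)$ into the symmetry constraint ``$D(\lambda)(i)=D(\lambda)(\bar i)\Rightarrow D(\lambda)(i)=0$''. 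Concretely, the word read off from the boundary of $\lambda\in\mathcal P(k,2n)$ has a $1$ in position $i$ iff the lift of $w$ has $w(j)=i$ for some $j\le k$; the requirement that $\{w(1),\ldots,w(k)\}$ together with its ``barred complement'' fills out a genuine element of the hyperoctahedral group with descent only at $k$ is exactly the statement that positions $i$ and $\bar i$ are never both $1$ and (for the descent/isotropy condition) never symmetrically both $0$ in the forbidden way — this is the computation to carry out carefully.

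For part~(3), I would note that the map $W_{01}\to\mathcal P(k,2n)$ sending a word to the lattice path it traces (left for $0$, down for $1$, starting at the northeast corner of the $k\times(2n-k)$ rectangle) is the standard bijection between length-$2n$ words with $k$ ones and partitions in $\mathcal P(k,2n)$, and $D$ is its inverse; so the content is that this restricts to a bijection $W_{01}\to\mathcal P'(k,2n)$. But that is immediate from Definition~\ref{def:partbruh}, which literally \emph{defines} $\mathcal P'(k,2n)$ as the image, and $W_{01}$ as the set of words corresponding to it — so part~(3) is a tautology once one has fixed the path convention, and I would state it as such, perhaps with the example $(5,2,1)\leftrightarrow 10001010$ as an illustration.

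The genuine obstacle is the compatibility check buried in part~(2): verifying that the two a priori different ways of cutting down $\mathcal P(k,2n)$ — (i) take partitions whose associated path-word is the ``first-$k$-values'' word of some signed permutation in $W^{P_k}$, versus (ii) take partitions satisfying the palindromic condition of Definition~\ref{def:partbruh} — actually coincide. I expect this to come down to the following elementary but fiddly fact about signed permutations: a size-$k$ subset $S\subset\{1,\ldots,2n\}$ arises as $\{w(1),\ldots,w(k)\}$ for a (necessarily unique) $w\in W^{P_k}$ precisely when $S$ contains no pair $\{i,\,2n+1-i\}$, and the ``$D(\lambda)(i)=D(\lambda)(\bar i)=0$ forbidden'' clause is the paraphrase of this after passing to complements and reading the path. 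Once that identification is pinned down, everything else is routine, and the lemma assembles by composing the three bijections.
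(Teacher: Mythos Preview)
The paper does not actually prove this lemma: it records that part~(1) is established in \cite{BKT,BKT2} and then simply states parts~(2) and~(3), treating them as definitional (indeed, $W_{01}$ is \emph{defined} in Definition~\ref{def:partbruh} as the set of words corresponding to $\mathcal P'(k,2n)$, so~(3) is literally a restatement of the definition, and~(2) is the observation that a minimal representative is determined by its first $k$ values). Your plan is therefore more detailed than the paper's, and the overall strategy---cite \cite{BKT,BKT2} for~(1), note~(3) is tautological, and reduce~(2) to characterizing which $k$-subsets of $\{1,\dots,2n\}$ arise as $\{w(1),\dots,w(k)\}$ for some $w\in W^{P_k}$---is exactly right.

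There is, however, a genuine slip in your final paragraph. You correctly identify the condition on the permutation side: $S=\{w(1),\dots,w(k)\}$ arises from some $w\in W^{P_k}$ iff $S$ contains no pair $\{i,\,2n+1-i\}$. But you then describe the partition-side condition as ``$D(\lambda)(i)=D(\lambda)(\bar i)=0$ forbidden,'' and earlier as ``never symmetrically both $0$ in the forbidden way.'' That inverts Definition~\ref{def:partbruh}: the clause there says that if $D(\lambda)(i)=D(\lambda)(\bar i)$ then the common value is $0$, i.e.\ what is forbidden is both entries equal to~$1$, not both equal to~$0$. With this correction the match is immediate and requires no ``passing to complements'': the word $\gamma$ has a $1$ in position $i$ exactly when $i\in S$, so ``$S$ contains no pair $\{i,\bar i\}$'' is literally the statement ``$\gamma(i)=\gamma(\bar i)=1$ never occurs,'' which is the $\mathcal P'(k,2n)$ condition verbatim. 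Drop the complement step and fix the $0/1$ swap, and your argument goes through cleanly.
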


\section{Curve neighborhood combinatorics}\label{sect:curvenbhds}

As in Section \ref{sect:typeA}, we provide descriptions of the combinatorics associated to curve neighborhoods. These definitions are spread out among the different types of combinatorics. Definitions \ref{defn:typeCpartitions}, \ref{defn:typeBpartitions1} and \ref{defn:typeBpartitionsd} describe the $k$-strict partition point of view in type C, type B for $d=1$ and $k>1$, and type B for the other cases, respectively. Definitions \ref{def:Cpart2} and \ref{def:Bpart2} describe the second set of partitions point of view in types C and B, respectively. Definitions \ref{defn:typeCperm} and \ref{defn:typeBperm} describe the permutation point of view in types C and B, respectively. Finally, Definition \ref{defn:binarywords} describes the 01-word perspective (in both types).

\begin{defn}\label{defn:typeCpartitions}
Let $\lambda_C \in  \Lambda$. Define $\lambda_C^d$ in the following way:
\begin{enumerate}
\item If $\lambda_1+\lambda_j > 2(n-k)+j-1$ for all $2 \leq j \leq k$ then define \[ \lambda_C^1=(\lambda_2 \geq \lambda_3 \geq \cdots \geq \lambda_k \geq 0) \in \Lambda; \] 
\item Otherwise, find the smallest $j$ such that $\lambda_1+\lambda_j \leq 2(n-k)+j-1$. Define \[ \lambda_C^1=(\lambda_2 \geq \lambda_3 \geq \cdots \geq \lambda_{j-1} \geq \lambda_j-1 \geq \cdots \geq \lambda_{k}-1 \geq 0) \in  \Lambda \] where $-1$'s are replaced by 0;
\item Define $\lambda_C^d=( \lambda_C^{d-1})^1$ for $d>1$.
\end{enumerate}
\end{defn}
\begin{example}
Consider the case $n=8$ and $k=5$. Let $u_C=(7<\bar 8<\bar 5<\bar 4 < \bar 2 ) \in W^P$. Then $u_C^1=(\bar 8<\bar 5< \bar 4< \bar 2< \bar 1)$ and \[ \lambda_C=\yng(5,3,2,2,1) \text{ and } \lambda_C^1=\yng(3,1,1). \]
\end{example}

For curve neighborhood combinatorics in terms of $(n-k)$-strict partitions for $\OG(k,2n+1)$, we first define the $d=1$ case for $2 \leq k \leq n-1$.

\begin{defn} \label{defn:typeBpartitions1}Consider the case $2 \leq k \leq n-1$. Let $\lambda_B \in  \Lambda$. Define $\lambda_B^1$ in the following way:

\begin{enumerate}
\item If $\lambda_1-\ell(\lambda_B) > 2(n-k)$ then $\lambda_B^1$ is defined to be as follows.
\begin{enumerate}
\item If $\lambda_1+\lambda_j > 2(n-k)+j-1$ for all $2 \leq j \leq k$ then define \[ \lambda_B^1=(\lambda_2 \geq \lambda_3 \geq \cdots \geq \lambda_{\ell(\lambda_B)} \geq \overbrace{1 \geq \cdots \geq 1}^{1+2k+\lambda_1-2n-\ell(\lambda_B)} \geq \overbrace{0 \geq \cdots \geq 0}^{2n-k-\lambda_1}) \in \Lambda; \] 
\item Otherwise, find the smallest $j$ such that $\lambda_1+\lambda_j \leq 2(n-k)+j-1$. 
\begin{enumerate}
\item If $\ell_1(\lambda_B) \geq j$ then \[ \lambda_B^1=(\lambda_2 \geq \lambda_3 \geq \cdots \geq \lambda_{j-1} \geq \lambda_j-1 \geq \cdots \geq \lambda_{\ell_1(\lambda_B)}-1 \geq \overbrace{1 \geq \cdots \geq 1}^{1+2k+\lambda_1-2n-\ell_1(\lambda_B)} \geq \overbrace{0 \geq \cdots \geq 0}^{2n-k-\lambda_1}) \in  \Lambda; \]
\item If $\ell_1(\lambda_B) < j$ then \[ \lambda_B^1=(\lambda_2 \geq \lambda_3 \geq \cdots  \geq \lambda_{\ell_1(\lambda_B)} \geq  \overbrace{1 \geq \cdots \geq 1}^{1+2k+\lambda_1-2n-\ell_1(\lambda_B)} \geq \overbrace{0 \geq \cdots \geq 0}^{2n-k-\lambda_1}) \in  \Lambda; \]
\end{enumerate}
\end{enumerate}
\item Otherwise, define $\lambda_B^1:=\lambda_C^1$.
\end{enumerate}
\end{defn}

Next we define the curve neighborhoods combinatorics in terms of $(n-k)$-strict partitions for the $\OG(1,2n+1)$ case and for the case when $d>1$ for $\OG(k,2n+1)$.

\begin{defn}\label{defn:typeBpartitionsd}
Let $\lambda_B \in  \Lambda$. Define $\lambda_B^d$ in the following way:
\begin{enumerate}
\item Consider the case $k=1$. If $\lambda_B=(2n-k)$ then define $\lambda_B^1=(1)$. If $\lambda_B \neq (2n-k)$ or $d>1$ then $\lambda_B^d=(0)$.
\item Consider the case $2 \leq k \leq n-1$.
\begin{enumerate}
\item If $d>1$  and even then define $\lambda_B^d:=\lambda^d_C$.
\item If $d>1$ and odd then define $\lambda_B^d:=(\lambda_B^{d-1})^1$.

\end{enumerate}
\end{enumerate}
\end{defn}

\begin{example}
Consider the case $n=8$ and $k=5$. Let $u_B=(1<5<\bar 8<\bar 4 <\bar 2 ) \in W^P$. Then $u_B^1=(5<\bar 8< \bar 4< \bar 3<\bar 2)$ and \[ \lambda_B=\yng(11,7,3,1,0) \text{ and } \lambda_B^1=\yng(7,3,1,1,1). \]
In this example, $\lambda_1-\ell(\lambda_B)>2(n-k)$ and $2n-k-\lambda_1=0$.
\end{example}

Next we will define the curve neighborhood combinatorics in terms of partitions in $\mathcal{P}'(k,2n)$. First for type C.

\begin{defn} \label{def:Cpart2}
Let $\mu_C \in \mathcal{P}'(k,2n)$. Then  
\begin{enumerate}
\item define $\mu_C^1=\mu_A^1$. That is, \[\mu_C^1:=(\mu_2-1, \mu_3-1, \cdots, \mu_k-1,0) \] where $-1$'s are replaced by 0's.
\item define $\mu_C^d=(\mu_C^{d-1})^1$ for $d>1$.
\end{enumerate}
\end{defn}

For type B combinatorics, first recall that the partition $\lambda \in \mathcal{P}(k,n)$ has a boundary consisting of $n$ steps moving either left or down in the south-west direction. We define $D(\lambda)(i)$ to be $D(\lambda)(i)=0$ if the $i$th step is left and $D(\lambda)(i)=1$ if the $i$th step is down.

\begin{defn}
Let $\lambda \in \mathcal{P}(k,n)$. Let $\lambda^t$ denote the transpose of $\lambda$. We say that $\lambda$ is {\it $m$-wingtip symmetric}\footnote{To the best of the authors' knowledge this property is unnamed in the literature. The part of the name ``wingtip" came from considering Young diagrams as a fixed wing aircraft.} if $m$ is the largest nonnegative integer such that \[ D(\lambda)(i)=D(\lambda^t)(i) \] for all $i$, $1 \leq i \leq m$.
\end{defn}

\begin{example}
Consider $\mu=(10 \geq 8 \geq 3 \geq 1 \geq 0) \in \mathcal{P}'(5,2 \cdot 8) \subset \mathcal{P}(5,2 \cdot 8)$. The partition $\mu$ is $3$-wingtip symmetric. It corresponds to the 01-word $0100100000100101$. Here the first and last character, the second and second to last, and the third and third to last character have opposite parity. Pictorially,

\[ \mu=\yng(10,8,3,1,0).   \] 

The first 3 bottom left hand boundary edges yield the partition $\gamma=\yng(1,0)$ and the first 3 top right hand boundary edges yields the partition $\gamma^t$.
\end{example}

\begin{defn} \label{def:Bpart2}
Let $\mu_B \in \mathcal{P}'(k,2n)$ be $m$-wingtip symmetric. Then define $\mu_B^d$ in the following way.
\begin{enumerate}
\item For the case $k=1$. If $\lambda_B=(2n-k)$ then define $\lambda_B^1=(1)$. If $\lambda_B \neq (2n-k)$ or $d>1$ then $\lambda_B^d=(0)$.
\item For the case that  $1<k<2n$ we have the following subcases.
\begin{enumerate} 
\item If $d=1$ then $\mu_B^1$ is defined by
\begin{enumerate}
\item If $D(\mu_B)(\overline m )= 1$ and at the end of the $i$th row then \[\mu_B^1:=(\mu_2-1 \geq \cdots \geq \mu_{i-1}-1, \mu_i,\mu_i, \mu_{i+1}, \cdots, \mu_k ); \] 
\item If $D(\mu_B)(\overline m )= 0$, at the bottom of the $i$th row, and in the $j$th column then  \[\mu_B^1:=(\mu_2-1 \geq \cdots \geq \mu_{i}-1, j, \mu_{i+1}, \cdots, \mu_k ). \]
\end{enumerate}
\item If $d>1$ and even define $\mu_B^d:=\mu_C^d;$
\item If $d>1$ and odd define $\mu_B^d=(\mu_B^{d-1})^1$.
\end{enumerate}
\end{enumerate}
\end{defn}
\begin{example}
Consider $\mu=(10 \geq 8 \geq 3 \geq 1 \geq 0) \in \mathcal{P}'(5,2 \cdot 8) \subset \mathcal{P}(5,2 \cdot 8)$. This partition is 3-wingtip symmetric. Pictorially, 
\[ \mu_B=\yng(10,8,3,1,0) \mbox{ and } \mu^1_B=\yng(7,2,1,1,0).\]
\end{example}

\begin{defn}\label{defn:typeCperm} Let $u_C=(u(1)<u(2)<\cdots<u(k)) \in W^{P}$. Define $u_C^d$ in the following way.
\begin{enumerate}
\item For the case case $d=1$.
\begin{enumerate}
\item If $u(k)< \bar{1}$ then define $u_C^1:=\left(u(2)<u(3)<\cdots<u(k)< \bar{1} \right)$;
\item If  \[ u_C=\left(u(1)<u(2)<\cdots <u(k-j_0-1)<\overbrace{\overline{j_0+1}}^{u(k-j_0)}<\cdots<\overbrace{\overline{2}}^{u(k-1)}< \overbrace{\bar{1}}^{u(k)}\right)\] where $u(k-i)=\overline{i+1} \mbox{ for } 0 \leq i \leq j_0$ and $u(k-j_0-1)+1<\overline{j_0+1}$ then define \[u_C^1=\left(u(2)<\cdots <\overbrace{u(k-j_0-1)}^{u^1(k-j_0-2)}< \overbrace{\overline{j_0}}^{u^1(k-j_0-1)}<\overbrace{\overline{j_0+1}}^{u^1(n-j_0)}<\cdots<\bar{2}< \bar{1} \right). \]
\end{enumerate}
\item If $d>1$ then define $u^d$ recursively by $(u^{d-1})^1$.
\end{enumerate}
\end{defn}

\begin{defn}\label{defn:typeBperm} Let $u_B \in W^{P}$.
\begin{enumerate}
\item Consider the case $k=1$. If $u_B=(1)$ then define $u_B^1=(\bar{2})$. If $u_B \neq (1)$ or if $d>1$ then define $u_B^d=(\bar{1})$.
\item Consider the case $2 \leq k \leq n-1$.
\begin{enumerate}
\item For the case $d=1$. Let \[ u_B=\left(u(1)<u(2)<\cdots <u(k)\right).\] Let $J=\min \left( \{1,2,\cdots, n \} \backslash \{ |u(1)|, \cdots, |u(k)|\} \right)$ and $j_0= \min \left( \{ j: u(j) >\bar{J} \} \cup \{ k+1\} \right)$. 
\begin{enumerate}
\item If $j_0=k+1$ then define \[u_B^1=\left(u(2)<u(3)<\cdots<u(k)<\bar{J} \right). \]
\item If $j_0 \leq k$ then define \[u_B^1=\left(u(2)<u(3)<\cdots<u(j_0-1)<\bar{J}<u(j_0)<\cdots<u(k) \right). \]
\end{enumerate}
 \item If $d>1$ and even then define $u_B^d:=u_C^{d}$.
 \item If $d>1$ and odd then define $u_B^d:=(u_B^{d-1})^1$.
 \end{enumerate}
 \end{enumerate}
\end{defn}

\begin{defn}\label{defn:binarywords}\leavevmode
\begin{enumerate}
 \item Let $\gamma_C \in W_{01}$ and let $\gamma_C \mapsto u_C \in W^{P}$. Define $\gamma_C^d \in W_{01}$ to be the 01-word in bijection with $u_C^d$.
 \item Let $\gamma_B \in W_{01}$ and let $\gamma_B \mapsto u_B \in W^{P}$. Define $\gamma_B^d \in W_{01}$ to be the 01-word in bijection with $u_B^d$.
 \end{enumerate}
 \end{defn}




\subsection{Dictionary correspondence for curve neighborhood combinatorics}

The proofs of Propositions \ref{prop:C} and \ref{prop:B} are reserved until Section \ref{sect:techproof}.

\begin{prop} \label{prop:C}
 Suppose $\lambda_C \in \Lambda_C$, $\mu_C \in \mathcal{P}'(k,2n)$, $u_C \in W^{P}$, and $\gamma_C \in W_{01}$ are in bijection with one another and $d \geq 0$. Then $\lambda_C^d$, $\mu_C^d$, $u_C^d$, and $\gamma_C^d$ are in bijection with one another.
\end{prop}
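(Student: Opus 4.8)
The plan is to reduce everything to a single statement—that the four maps $\cdot^1$ commute with the bijections of Lemma \ref{lem:TypeBCbij}—and then bootstrap to general $d$ by the recursive definitions. Concretely, since $\lambda_C^d, \mu_C^d, u_C^d, \gamma_C^d$ are all defined recursively from the $d=1$ case (part (2)/(3) of Definitions \ref{defn:typeCpartitions}, \ref{def:Cpart2}, \ref{defn:typeCperm}, \ref{defn:binarywords}), it suffices by induction on $d$ to prove the claim for $d=1$; the inductive step is immediate once we know $\cdot^1$ commutes with the bijections, because then the recursive clauses $\delta^d = (\delta^{d-1})^1$ match up termwise. So the whole proposition collapses to: if $\lambda_C, \mu_C, u_C, \gamma_C$ correspond under the Lemma \ref{lem:TypeBCbij} bijections, then so do $\lambda_C^1, \mu_C^1, u_C^1, \gamma_C^1$.

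For the $d=1$ case I would follow the template of Proposition \ref{prop:TypeAcnbdcorres}. The 01-word picture is the hub: by Definition \ref{defn:binarywords}(1), $\gamma_C^1$ is \emph{defined} as the word in bijection with $u_C^1$, so the permutation--word correspondence is automatic. The remaining work is (a) show $\mu_C^1$ corresponds to $\gamma_C^1$ under the word$\leftrightarrow\mathcal{P}'(k,2n)$ bijection of Lemma \ref{lem:TypeBCbij}(3), and (b) show $\lambda_C^1$ corresponds to $\gamma_C^1$ (equivalently to $u_C^1$) under the $(n-k)$-strict bijection of Lemma \ref{lem:TypeBCbij}(1). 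For (a): exactly as in the type A argument, translating $u_C^1$ from Definition \ref{defn:typeCperm} into words shows $\gamma_C^1$ is obtained from $\gamma_C$ by interchanging the first $1$ with the last $0$ (writing $\gamma_C = \mathbf{1}.0.u.1.\mathbf{0}$, the two cases $u(k)<\bar 1$ and the ``saturated tail'' case both reduce to this swap); on partitions in $\mathcal{P}'(k,2n)$—which live inside $\mathcal{P}(k,2n)$—this swap is precisely the operation $\mu \mapsto (\mu_2-1 \geq \cdots \geq \mu_k-1 \geq 0)$, matching Definition \ref{def:Cpart2}(1). One should also check that the result still lies in $\mathcal{P}'(k,2n)$, i.e.\ that the defining palindrome-type condition on $D$ is preserved; this is where the type C geometry (as opposed to arbitrary partition moves) is actually used, and it is plausibly the trickiest bookkeeping.

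For (b) I would compute directly with the explicit formulas in Lemma \ref{lem:TypeBCbij}(1). Given $\lambda_C = (\lambda_1 \geq \cdots \geq \lambda_k)$ with associated $w = w(\lambda_C)$, apply Definition \ref{defn:typeCperm} to get $w^1$; in case (1a) this is the cyclic-type shift $(w(2) < \cdots < w(k) < \bar 1)$, and in case (1b) it inserts $\overline{j_0}$ in the appropriate slot. Then run the backward map $w^1 \mapsto \lambda'$ from Lemma \ref{lem:TypeBCbij}(1) and verify $\lambda' = \lambda_C^1$ as given by Definition \ref{defn:typeCpartitions}: the two branches of Definition \ref{defn:typeCpartitions} (whether $\lambda_1+\lambda_j > 2(n-k)+j-1$ for all $j$, or there is a smallest violating $j$) should line up with the two branches $u(k)<\bar 1$ vs.\ the saturated-tail case of Definition \ref{defn:typeCperm}, and the ``subtract $1$ from the tail past position $j$'' in Definition \ref{defn:typeCpartitions}(2) should fall out of the counting term $\#\{i<j : w(i)+w(j) > 2n+1\}$ changing by exactly one for the relevant indices. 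I expect \emph{this} step—reconciling the $(n-k)$-strict formula with the permutation move through the nonlinear counting correction—to be the main obstacle, since unlike the type A case the partition is not simply ``read off'' the permutation; the bulk of the real proof (deferred, as noted, to Section \ref{sect:techproof}) is the careful case analysis making (b) go through, together with the $\mathcal{P}'$-closure check in (a).
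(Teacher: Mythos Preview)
Your plan matches the paper's proof: reduce to $d=1$ by the recursive definitions, get the $u_C^1 \leftrightarrow \gamma_C^1 \leftrightarrow \mu_C^1$ correspondence for free from the type A result (Proposition \ref{prop:TypeAcnbdcorres}) since the type C operations on permutations, words, and $\mathcal{P}'(k,2n)$-partitions coincide literally with the type A ones on $\mathcal{P}(k,2n)$, and then verify $\lambda_C^1 \leftrightarrow u_C^1$ by direct computation with the formulas of Lemma \ref{lem:TypeBCbij}(1). The only adjustment is that the two branches of Definition \ref{defn:typeCpartitions} do \emph{not} line up with the two branches of Definition \ref{defn:typeCperm} as you anticipate; the paper instead splits on the $\lambda$-condition throughout, writes the permutation uniformly in saturated-tail form, and uses an auxiliary lemma (Lemma \ref{lem:backend}) to control the tail entries---but this is a reorganization of the bookkeeping, not a different strategy.
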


\begin{example}
Let $n=6$ and $k=4$. Consider $u_C=(3<4<6<\bar{5}) \in  W^{P}.$ Then $u_C^1=(4<6<\bar{5}<\bar{1})$, $\gamma_C=001101010000$, and $\gamma_C^1=000101010001$,  \[ \lambda_C=\yng(6,5,3,2) \text{ and } \lambda_C^1=\yng(5,3,2,0), \] and \[ \mu_C=\yng(6,6,5,4) \text{ and } \mu_C^1=\yng(5,4,3,0), \]
\end{example}

\begin{prop} \label{prop:B}
 Let $\lambda_B \in \Lambda_B$, $\mu_C \in \mathcal{P}'(k,2n)$, $u_B \in W^{P}$, and $\gamma_B \in W_{01}$ be in bijection with one another and $d \geq 0$. Then $\lambda_B^d$, $\mu_B^d$, $u_B^d$, and $\gamma_B^d$ are in bijection with one another.
\end{prop}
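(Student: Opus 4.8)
# Proof proposal for Proposition \ref{prop:B}

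The plan is to prove the four-way bijection compatibility by pivoting everything through the permutation (signed Weyl group) model, exactly as was done for type C in Proposition \ref{prop:C}. The key observation is that among the four operators $\cdot^d$ appearing in the statement, the permutation operator $u_B^d$ of Definition \ref{defn:typeBperm} is the ``master'' definition: it is defined directly via an explicit signed-permutation recipe, and once we know it agrees with $u_B \cdot z_d W_P$ (the honest curve-neighborhood recursion), the other three operators are forced. So the real content is twofold: first, show $u_B^d$ matches the partition operators $\lambda_B^d$ and $\mu_B^d$ under the bijections of Lemma \ref{lem:TypeBCbij}; second, note that $\gamma_B^d$ is \emph{defined} (Definition \ref{defn:binarywords}) to be the $01$-word of $u_B^d$, so that correspondence is automatic.

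The main work is the case $d=1$, since both partition operators and the permutation operator are defined recursively for $d>1$ in terms of the $d=1$ operator (and, for even $d$, by falling back to the type C operator, for which Proposition \ref{prop:C} already gives the compatibility). First I would handle the trivial case $k=1$ by direct inspection: there $W^P$ has just the two nontrivial elements, the two partitions each have two elements, and the formulas in Definition \ref{defn:typeBperm}(1), Definition \ref{defn:typeBpartitionsd}(1), and Definition \ref{def:Bpart2}(1) visibly match under the bijections. For $2\le k\le n-1$ and $d=1$, I would fix $u_B = (u(1)<\cdots<u(k)) \in W^P$ and carefully trace what the operator does:

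\begin{enumerate}
\item \textbf{Permutation side.} Compute $J = \min(\{1,\dots,n\}\setminus\{|u(1)|,\dots,|u(k)|\})$ and $j_0$; the operator deletes $u(1)$ and inserts $\bar J$ in the (unique) sorted position, producing $u_B^1$. I would check directly that $u_B^1 \in W^P$, i.e. that it is still a strictly increasing signed sequence with the last $n-k$ entries forced to lie below $n$.
\item \textbf{$(n-k)$-strict partition side.} Translate $u_B$ to $\lambda_B$ via the formula $\lambda_j = 2n+1-k-w(j)+\#\{i<j: w(i)+w(j)>2n+1\}$ of Lemma \ref{lem:TypeBCbij}(1). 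The dichotomy ``$\lambda_1 - \ell(\lambda_B) > 2(n-k)$ versus not'' in Definition \ref{defn:typeBpartitions1} corresponds exactly to whether inserting $\bar J$ happens ``to the right of'' or ``among'' the already-barred entries of $u_B$, equivalently whether $J$ is small enough that $\bar J$ lands past the current tail; and the subcase split on $\ell_1(\lambda_B)$ versus $j$ tracks the boundary between partition parts that are $\ge 2$ and those equal to $1$ after the shift. I would verify that applying the $\lambda_j$-formula to $u_B^1$ reproduces exactly the piecewise formulas (1)(a), (1)(b)(i), (1)(b)(ii), and the ``otherwise'' clause (where it reduces to $\lambda_C^1$, already handled in Proposition \ref{prop:C}).
\item \textbf{$\mathcal P'(k,2n)$ side.} Translate to the $01$-word and then to $\mu_B$. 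The wingtip-symmetry parameter $m$ of Definition \ref{def:Bpart2} is precisely $\bar J$-related data: $m$ measures how far the $01$-word agrees with its reverse-complement, and $D(\mu_B)(\bar m)$ being $1$ (end of a row) versus $0$ (bottom of a row, in column $j$) is exactly the $j_0=k+1$ versus $j_0\le k$ dichotomy on the permutation side. I would check that deleting the leading $1$ of $\gamma_B$, performing the standard $10\to 01$ migration, and re-inserting according to where $\bar J$ goes, produces the Young diagram described in \ref{def:Bpart2}(2)(a)(i)–(ii).
\end{enumerate}

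With $d=1$ settled, the recursion closes the argument: for $d>1$ even, all three operators on the B side are by definition the corresponding C-side operators $\lambda_C^d, \mu_C^d, u_C^d$, so Proposition \ref{prop:C} gives the compatibility immediately; for $d>1$ odd, all three are $(\cdot^{d-1})^1$, so we induct on $d$ using the $d-1$ (even) case together with the $d=1$ case just proved. The $k=1$ sub-case is likewise handled by the explicit small formulas. I expect the \textbf{main obstacle} to be the bookkeeping in step (2): matching the many-branched piecewise definition of $\lambda_B^1$ against the single clean permutation move requires correctly identifying $\ell(\lambda_B)$, $\ell_1(\lambda_B)$, and the smallest $j$ with $\lambda_1+\lambda_j \le 2(n-k)+j-1$ in terms of the positions of the barred entries and the value $J$ — in particular verifying that the counts $1+2k+\lambda_1-2n-\ell(\lambda_B)$ and $2n-k-\lambda_1$ of trailing $1$'s and $0$'s come out right. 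A useful sanity check throughout is that $|\lambda_B^1|$ and $\ell(u_B^1)$ must be consistent with the degree-$1$ curve neighborhood having the expected codimension drop, which pins down the arithmetic.
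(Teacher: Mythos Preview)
Your proposal is correct and follows essentially the same route as the paper: reduce to $d=1$ (with $k=1$ handled by inspection and even $d>1$ handled by falling back to Proposition~\ref{prop:C}), then split the $d=1$ verification into (i) matching $u_B^1$ with $\lambda_B^1$ via the case split on $\lambda_1-\ell(\lambda_B)$ versus $2(n-k)$, and (ii) matching $\gamma_B^1$ with $\mu_B^1$ via the wingtip-symmetry parameter $m$, which the paper likewise identifies with the first position where $\gamma(i)=\gamma(\bar i)$ (and hence with $J$). Your anticipated obstacle---the bookkeeping for the counts of trailing $1$'s and $0$'s in $\lambda_B^1$---is exactly where the paper spends its effort.
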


\subsection{Curve Neighborhoods}\label{sect:qdeg}

In this subsection we calculate curve neighborhoods for submaximal isotropic Grassmannians in both types B and C. We begin with type C.

\subsection{Type C}  We first restate Proposition \ref{recursion} for type C. Let \[ \{t_i \pm t_j :1 \leq i \leq j \leq n \}\] be the set of positive roots where \[ \{t_i-t_{i+1}: 1 \leq i \leq n-1 \} \cup \{2t_n \}\] is the set of simple roots. 

\begin{prop} \label{prop:crecursion} We restate the recursion for type C.
\begin{enumerate}
\item The root $2t_1$ is a maximal root of any $d>0$ and \[z_1=s_{2t_1}=s_1s_2 \ldots s_n \ldots s_2 s_1.\] 
\item Let $u_C \in W^{P}$ and $d>0$. Then
\begin{enumerate}
\item $z_d=z_1 \cdot \ldots \cdot z_1$ ($d$-times);
\item $u_C^1$ is the minimal length coset representative of $u_C \cdot z_1W_P$;
\item $u_C^d$ is the minimal length coset representative of $u_C^{d-1} \cdot z_1 W_P$;
\item $u_C^d$ is the minimal length coset representative of $u_C \cdot z_d W_P$.
\end{enumerate}
\end{enumerate}
\end{prop}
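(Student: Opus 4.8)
\textbf{Proof plan for Proposition \ref{prop:crecursion}.}

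The plan is to verify the two numbered claims by direct computation in the type C root system, then bootstrap the permutation statements from Proposition \ref{recursion} exactly as was done in type A (Proposition \ref{prop:arecursion}).

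For part (1), I would first recall that in type $C_n$ the highest root is $2t_1$ and that it is \emph{long}, so its coroot is the short coroot $(2t_1)^\vee = t_1$. Expanding in terms of simple coroots, $t_1 = (t_1-t_2)^\vee + (t_2-t_3)^\vee + \cdots + (t_{n-1}-t_n)^\vee + \tfrac12(2t_n)^\vee$, but the cleaner route is: since $P=P_k$ is obtained by deleting $s_k$, the degree $d$ lives in a rank-one lattice generated by $[X_{s_k}]$, and the condition $\beta^\vee + \Delta_P^\vee \le d$ for $d>0$ is satisfied by $\beta = 2t_1$ because the coefficient of the simple coroot $\alpha_k^\vee$ in $(2t_1)^\vee$ equals $1$ (one reads this off the marked Dynkin diagram / the standard table of highest root coordinates for $C_n$). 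One then checks $2t_1$ is \emph{maximal} among roots $\beta \in R^+\setminus R_P^+$ with this property, since any larger root in the root order would have a strictly larger $\alpha_k^\vee$-coefficient, contradicting the bound. Given maximality, Proposition \ref{recursion} gives $z_1 = s_{2t_1}\cdot z_0 = s_{2t_1}$, and the reduced-word expression $s_{2t_1}=s_1s_2\cdots s_{n-1}s_n s_{n-1}\cdots s_2 s_1$ is the standard reflection-word for the highest root of $C_n$ (verify by the usual induction $s_{\alpha}=s_i s_{\alpha - \alpha_i^\vee\text{-shift}} s_i$, or just cite that $2t_1 = s_1\cdots s_{n-1}(2t_n)$ acting on roots).

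For part (2a), since $2t_1$ is a maximal root of \emph{every} $d>0$, iterating Proposition \ref{recursion} gives $z_d = s_{2t_1}\cdot z_{d-1} = z_1\cdot z_{d-1}$, hence $z_d = z_1\cdot\cdots\cdot z_1$ ($d$ times) by induction. Parts (2b)--(2d) are then a purely combinatorial check: I would show that the minimal-length coset representative of $u_C\cdot z_1 W_P$ is exactly the permutation $u_C^1$ from Definition \ref{defn:typeCperm}. Writing $z_1 = s_1s_2\cdots s_n\cdots s_2 s_1$ and applying the Hecke product to $u_C = (u(1)<\cdots<u(k)\mid \cdots)$, the effect of right-multiplying by this long word (then projecting to $W/W_P$) is to move $u(1)$ out of the first $k$ slots and bring in the largest available value $\bar 1$ — unless $\bar 1$ is already present, in which case a cascade of already-maximal entries $\bar 1 > \bar 2 > \cdots$ forces the smaller adjustment described in case (1b) of the definition. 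This is the type C analogue of the ``interchange the first $1$ and the last $0$'' lemma in type A, and I would verify it by tracking lengths through each Hecke factor $\cdot s_i$. Then (2c) is immediate from Definition \ref{defn:typeCperm}(2) ($u_C^d = (u_C^{d-1})^1$), and (2d) follows by combining (2a), (2b), (2c) with associativity of the Hecke product.

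The main obstacle is part (2b): the bookkeeping for the Hecke product $u_C\cdot z_1 W_P$ in the signed-permutation group, especially the ``cascade'' case (1b) where the tail of $u_C$ already consists of the top values $\bar 1, \bar 2, \ldots, \overline{j_0+1}$. Here right multiplication by $s_n$ (the factor corresponding to the simple root $2t_n$) behaves differently from the type A case — it toggles the sign of a single entry rather than swapping two — so one must carefully check at which Hecke factors the length increases and at which it stalls. I expect this to reduce, after the reduction to $W/W_P$, to the same underlying count as in type A (which is why the formula in Definition \ref{defn:typeCperm} mirrors Definition \ref{def:typeAcombforcurve}), but the sign-change at position $n$ is the delicate point that needs explicit verification. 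The remaining steps are routine given Proposition \ref{recursion} and the bijections of Lemma \ref{lem:TypeBCbij}.
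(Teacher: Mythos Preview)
Your plan is essentially the paper's own proof: identify $2t_1$ as the highest root, note its coroot has coefficient $1$ at $\alpha_k^\vee$, apply Proposition~\ref{recursion} to get $z_d = z_1\cdot\ldots\cdot z_1$, and then do the direct Hecke-product computation for (2b). Two small corrections: in your coroot expansion the last coefficient should be $1$, not $\tfrac12$ (since $(2t_n)^\vee = t_n$, the expansion is simply $t_1 = (t_1-t_2)+\cdots+(t_{n-1}-t_n)+t_n$ with all coefficients equal to $1$, which is exactly what the paper writes); and your maximality argument is unnecessary, since $2t_1$ is already the \emph{highest} root of $R^+$, so no root dominates it---the only thing to check is that $(2t_1)^\vee + \Delta_P^\vee \le d$, which follows because the $\alpha_k^\vee$-coefficient is $1$.
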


We will provide an example before we prove Proposition \ref{prop:crecursion}.
\begin{example}
Consider $u_C=12\bar435$ in the Weyl group of $\IG(3,10)$. Then the Hecke product $u_C \cdot z_1$ is
\begin{align*}
u_C \cdot z_1&= 12\bar435 \cdot s_1s_2s_3s_4s_5s_4s_3s_2s_1=  21\bar435 \cdot s_2s_3s_4s_5s_4s_3s_2s_1=2\bar4135 \cdot s_3s_4s_5s_4s_3s_2s_1\\
&= 2\bar4315 \cdot s_4s_5s_4s_3s_2s_1= 2\bar4351 \cdot s_5s_4s_3s_2s_1= 2\bar435\bar1 \cdot s_4s_3s_2s_1= 2\bar43\bar15 \cdot s_3s_2s_1\\
&= 2\bar4\bar135 \cdot s_2s_1=  2\bar1\bar435  \cdot s_1=\bar12\bar435.
\end{align*}
Finally notice that $u_C^1=(2<\bar4<\bar1|3<5)$ is the minimum length coset representative of $u_C \cdot z_1W_P$.
\end{example}

\begin{proof}
First observe the following standard facts. The highest root is $2t_1$ and its coroot expands as a sum of simple coroots as follows: \[t_1=(t_1-t_2)+(t_2-t_3)+\cdots+(t_{n-1}-t_n)+t_n. \]  In particular, each coefficient is one and $\alpha:=2t_1 \in R^+\backslash R^+_P$ is a maximal root for any $d>0$. By Proposition \ref{recursion}, $s_{\alpha} \cdot z_{d-\alpha^\vee}=z_dW_P$, $z_1=s_{\alpha}$, and $z_d=z_1 \cdot \ldots \cdot z_1$ ($d$-times). A direct calculation shows that $u_C^1$ is the minimal length coset representative of $u_C \cdot z_1W_P$. Finally, part 2 of Definition \ref{defn:typeCperm} indicates that $u_C^d=(u_C^{d-1})^1$. So, $u_C^d$ is the minimal length coset representative of $u_C^{d-1} \cdot z_1 W_P$. The result follows.
\end{proof}

The next theorem describes the curve neighborhoods of Schubert varieties in $\IG(k,2n)$.
\begin{thm} \label{Thm:Ccrv}
Let $\lambda \in \Lambda, \mu \in \mathcal{P}'(k,2n), u \in W^P, \gamma \in W_{01}$ be in bijection with one another. Consider the Schubert variety $X^{\lambda} \subset  \IG(k,2n) $. Then 
\[  \Gamma_d(X^\lambda)=X^{\lambda^d}; \Gamma_d(X^\mu)=X^{\mu^d}; \Gamma_d(X^u)=X^{u^d}; \Gamma_d(X^\gamma)=X^{\gamma^d}.\]
\end{thm}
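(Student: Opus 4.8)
The plan is to reduce everything to the single permutation statement $\Gamma_d(X^u)=X^{u^d}$ for $u\in W^P$, and then transport it along the bijections of Lemma~\ref{lem:TypeBCbij}. First I would invoke Proposition~\ref{prop:crecursion}: by part (1) the root $2t_1$ is maximal for every effective degree $d>0$, and $z_1=s_{2t_1}$, so iterating Proposition~\ref{recursion} gives $z_d=z_1\cdot\cdots\cdot z_1$ ($d$ times) in the Hecke monoid. Combined with the description $\Gamma^X_d(X^w)=X^{w\cdot z_dW_P}$ from the Proposition preceding Definition~\ref{recursion}'s statement, this means it suffices to show that $u^d$ (as defined in Definition~\ref{defn:typeCperm}) is exactly the minimal-length coset representative of $u\cdot z_dW_P$. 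Parts (2b)--(2d) of Proposition~\ref{prop:crecursion} assert precisely this, so the permutation case is essentially handed to us once that Proposition is proved; for the write-up here I would simply cite it and emphasize that $\Gamma_d(X^u)=X^{u\cdot z_dW_P}=X^{u^d}$.

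Next I would handle the three other indexing sets. The point is that by Proposition~\ref{prop:C}, if $\lambda\in\Lambda$, $\mu\in\mathcal{P}'(k,2n)$, $u\in W^P$, $\gamma\in W_{01}$ are in bijection, then $\lambda^d$, $\mu^d$, $u^d$, $\gamma^d$ are again in mutual bijection. Since the bijections of Lemma~\ref{lem:TypeBCbij} are exactly the identifications of the Schubert variety $X^\lambda$ with $X^\mu$, $X^u$, $X^\gamma$ (same subvariety, different label), the identity $\Gamma_d(X^u)=X^{u^d}$ immediately rewrites as $\Gamma_d(X^\lambda)=X^{\lambda^d}$, $\Gamma_d(X^\mu)=X^{\mu^d}$, and $\Gamma_d(X^\gamma)=X^{\gamma^d}$. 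So the theorem follows formally from the conjunction of Proposition~\ref{prop:crecursion} and Proposition~\ref{prop:C}. One should also dispose of the trivial case $d=0$, where $\Gamma_0(X^\lambda)=X^\lambda$ and $\lambda^0=\lambda$ by convention, so there is nothing to check.

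The genuine content, and the main obstacle, is therefore pushed into the two cited Propositions, and in a fully self-contained argument the hard part is the ``direct calculation'' in the proof of Proposition~\ref{prop:crecursion} showing $u^1$ is the minimal representative of $u\cdot z_1W_P$. Here one must expand $z_1=s_1s_2\cdots s_n\cdots s_2s_1$ and compute the Hecke product $u\cdot z_1$ in the hyperoctahedral group, tracking carefully which simple reflections act by honest multiplication (when they increase length) and which act trivially; the case division in Definition~\ref{defn:typeCperm}—whether $u(k)<\bar1$, i.e. whether the barred ``tail'' $\overline{j_0+1}<\cdots<\bar1$ is already present—is exactly the bookkeeping needed to determine when the sweep of reflections stalls. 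I would organize this by following the value $\bar1$ (equivalently the position of the last ``down'' step) as it migrates, verifying at each stage that length increases by exactly one until the configuration of the last few entries forces a stall, and then checking the result lies in $W^P$ and has the claimed one-line form. Transporting this to $k$-strict partitions via the explicit formula $w(j)=2n+1-k-\lambda_j+\#\{i<j:\lambda_i+\lambda_j\le 2(n-k)+j-i\}$ is where the inequality conditions in Definition~\ref{defn:typeCpartitions} (such as $\lambda_1+\lambda_j\le 2(n-k)+j-1$) come from, and matching these up is the most calculation-heavy part of Proposition~\ref{prop:C}; but at the level of the present theorem all of that is a black box.
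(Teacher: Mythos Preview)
Your proposal is correct and follows essentially the same route as the paper: the paper does not give a separate proof of Theorem~\ref{Thm:Ccrv} but rather derives it from the general identity $\Gamma_d(X^w)=X^{w\cdot z_dW_P}$ together with Proposition~\ref{prop:crecursion} (showing $u^d$ is the minimal representative of $u\cdot z_dW_P$) and Proposition~\ref{prop:C} (transporting along the bijections), exactly as you outline. Your remarks about where the real work lies---the Hecke-product computation inside Proposition~\ref{prop:crecursion} and the partition translation in Proposition~\ref{prop:C}---are accurate and match the paper's organization.
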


\subsection{Type B} As in type C, we restate Proposition \ref{recursion} for type B. Recall that we are considering the cases where $k<n$ for $\OG(k,2n+1)$. The set of positive roots is \[ \{t_i \pm t_j :1 \leq i < j \leq n \} \cup \{t_i :1 \leq i \leq n \}\] and the set of simple roots is \[\{t_i-t_{i+1}: 1 \leq i \leq n-1 \} \cup \{t_n \}.\] 
\begin{prop} \label{prop:brecursion} We restate the recursion for type B.
\begin{enumerate}
\item Consider the case $k=1$. 
\begin{enumerate}
\item The root $t_1+t_2$  is a maximal root of any degree $d>0$ and \[ z_1=s_{t_1+t_2}= (s_2s_1)(s_3s_2) \cdots (s_ns_{n-1})s_n(s_{n-2}s_{n-1})\cdots (s_{1}s_2).\] 
\item Let $u_B \in W^P$ and $d>0$. Then \begin{enumerate}
\item $z_d=z_1 \cdot \ldots \cdot z_1$;
\item $u_B^1$ is the minimal coset representative of $u_B\cdot z_1W_P$;
\item $u_B^d$ is the minimal coset representative of $u_B^{d-1} \cdot z_1 W_P$;
\item $u_B^d$ is the minimal coset representative of $u_B \cdot z_d W_P$.
\end{enumerate}
\end{enumerate}

\item Consider the case $2 \leq k \leq n-1$.  \begin{enumerate}
\item The root $t_1+t_{k+1}$ is a maximal root of degree $d=1$ and \[z_1=s_{t_1+t_{k+1}}=s_1\cdots s_{k-1}(s_{k+1}s_k) \cdots (s_ns_{n-1})s_n(s_{n-2}s_{n-1})\cdots (s_{k}s_{k+1}) s_{k-1}\cdots s_1;\]
\item The root $t_1+t_2$ is a maximal root of any degree $d>1$ and \[z_2=s_{t_1+t_2}=(s_2s_1)(s_3s_2) \cdots (s_ns_{n-1})s_n(s_{n-2}s_{n-1})\cdots (s_{1}s_2).\] 
\item Let $u_B\in W^{P}$. Then
\begin{enumerate}
\item If $d>0$ is even then $z_d=z_2 \cdot \ldots \cdot z_2$ ($d/2$-times);
\item If $d>0$ is odd then $z_d=z_2 \cdot \ldots \cdot z_2 \cdot z_1$ ($z_2$ appears $(d-1)/2$-times);
\item $u_B^1$ is the minimal coset representative of $u_B\cdot z_1W_P$;
\item $u_B^2$ is the minimal coset representative of $u_B\cdot z_2W_P$;
\item If $d>3$ is even then $u_B^d$ is the minimal coset representative of $u_B^{d-2} \cdot z_2W_P$;
\item If $d>2$ is odd then $u_B^d$ is the minimal coset representative of $u^{d-1}_B \cdot z_1W_P$;
\item For $d>0$, $u_B^d$ is the minimal coset representative of $u_B \cdot z_dW_P$.
\end{enumerate}
\end{enumerate}
\end{enumerate}
\end{prop}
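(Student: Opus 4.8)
The plan is to establish Proposition~\ref{prop:brecursion} by applying the general recursion of Proposition~\ref{recursion} together with the explicit combinatorics of Definitions~\ref{defn:typeBperm} and \ref{defn:binarywords}, proceeding exactly in parallel to the type~A and type~C arguments (Propositions~\ref{prop:arecursion} and \ref{prop:crecursion}). The key structural difference in type~B is that, unlike in types~A and~C, the minimal-degree line neighborhood $\Gamma_1$ and the degree-$2$ neighborhood $\Gamma_2$ are governed by \emph{different} maximal roots. So the first step is to identify the maximal roots of each effective degree $d$ for $X = \OG(k,2n+1)$ with $P = P_k$. Using the positive root system $\{t_i\pm t_j\}\cup\{t_i\}$, the simple roots $\{t_i-t_{i+1}\}\cup\{t_n\}$, and the coroot expansions $(t_i+t_j)^\vee = (t_i-t_{i+1})+\cdots+(t_{j-1}-t_j) + 2(t_j - t_{j+1}) + \cdots + 2(t_{n-1}-t_n) + 2t_n$ and $t_i^\vee = 2(t_i-t_{i+1})+\cdots+2(t_{n-1}-t_n)+2t_n$ (for $i<n$), one checks which coroots satisfy $\beta^\vee + \Delta_P^\vee \le d[X_{s_k}]$ after quotienting by $\Delta_P^\vee = \Delta^\vee\setminus\{s_k^\vee\}$. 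Because only the coefficient of the simple coroot $(t_k-t_{k+1})^\vee$ survives the quotient, one finds that the coroot of $t_1+t_{k+1}$ contributes exactly $1$ to this coefficient while $t_1+t_2$ (for $k\ge 2$) contributes $2$; hence $t_1+t_{k+1}$ is the unique maximal root of $d=1$, and $t_1+t_2$ is the maximal root for every $d\ge 2$. In the $k=1$ case $P_1$ excludes $s_1$, $t_1+t_2$ already has coefficient $1$ on $(t_1-t_2)^\vee$, so it is the maximal root of every $d>0$, matching part~(1).

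The second step is to extract the formulas $z_1 = s_{t_1+t_{k+1}}$ (resp.\ $z_1 = s_{t_1+t_2}$ when $k=1$) and $z_2 = s_{t_1+t_2}$ via Proposition~\ref{recursion}: since $t_1+t_{k+1}$ is the maximal root of $d=1$ and $d-\alpha^\vee = 0$, we get $z_1 = s_{t_1+t_{k+1}}\cdot z_0 = s_{t_1+t_{k+1}}$; applying the recursion at $d=2$ with maximal root $t_1+t_2$ gives $z_2 = s_{t_1+t_2}\cdot z_{2-(t_1+t_2)^\vee}W_P$, and since $(t_1+t_2)^\vee$ quotients to $2[X_{s_k}]$ we get $z_2 = s_{t_1+t_2}\cdot z_0 = s_{t_1+t_2}$. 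Iterating the recursion for $d>2$: when $d$ is even, $z_d = s_{t_1+t_2}\cdot z_{d-2}W_P$, giving $z_d = z_2\cdots z_2$ ($d/2$ factors); when $d$ is odd, the maximal root of $d$ is still $t_1+t_2$ (for $d\ge 3$), so $z_d = z_2\cdot z_{d-2}W_P$ and inductively $z_d = z_2\cdots z_2\cdot z_1$ with $z_2$ appearing $(d-1)/2$ times, recovering parts (1)(b) and (2)(c)(i)--(ii). The explicit reduced-word expressions for $s_{t_1+t_k}$-type reflections, e.g.\ $s_{t_1+t_{k+1}} = s_1\cdots s_{k-1}(s_{k+1}s_k)\cdots(s_ns_{n-1})s_n(s_{n-2}s_{n-1})\cdots(s_ks_{k+1})s_{k-1}\cdots s_1$, are then verified by a direct induction on $n-k$ (or by tracking the action on $\{1,\dots,n,\bar n,\dots,\bar 1\}$ and checking it transposes $1\leftrightarrow\overline{k+1}$ and $\overline{1}\leftrightarrow k+1$ while fixing everything else, with the length being $2(n-k)+1 + 2(k-1) = 2n-1$).

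The third step is the verification that $u_B^1$, as defined combinatorially in Definition~\ref{defn:typeBperm}, equals the minimal-length coset representative of $u_B\cdot z_1 W_P$, and likewise $u_B^2$ matches $u_B\cdot z_2 W_P$; the remaining claims (2)(c)(iii)--(vii) then follow formally by the recursive definition $u_B^d = (u_B^{d-1})^1$ for odd $d>1$ and $u_B^d = u_C^d = (u_C^{d-1})^1$ for even $d$, combined with the already-proven type~C statement that $u_C^2$ is the minimal representative of $u_C\cdot z_2 W_P$ (note $z_2$ here is the same reflection $s_{t_1+t_2}$ as the $z_1$ appearing in type~C computed with respect to a different parabolic—this compatibility needs a remark). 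For the $u_B^1$ computation one carries out the Hecke product $u_B\cdot s_1\cdots s_{k-1}(s_{k+1}s_k)\cdots(s_ks_{k+1})s_{k-1}\cdots s_1$ step by step: the initial segment $s_1\cdots s_{k-1}$ cyclically shifts the first $k$ values into descending position and ``ejects'' $u(1)$, the middle segment slides the value past $u(k+1),\dots$ down to slot $n$ and then $s_n$ sign-flips it to $\overline{J}$ where $J$ is the smallest missing absolute value among $|u(1)|,\dots,|u(k)|$, and the final segment reinserts $\overline{J}$ into the correct sorted position among $u(2),\dots,u(k)$—yielding precisely the two cases ($j_0=k+1$ vs.\ $j_0\le k$) in the definition. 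I expect the main obstacle to be exactly this bookkeeping in the Hecke-product computation of $u_B\cdot z_1 W_P$: the interplay between the ``eject the first value, insert the sign-flipped smallest-missing value'' mechanism and the multiple sub-cases (whether the new entry exceeds existing barred entries, whether $J=n$, whether $u$ already contains a maximal barred block) makes it easy to mishandle boundary cases, and getting the indexing in $j_0 = \min(\{j : u(j)>\overline{J}\}\cup\{k+1\})$ to agree with the Weyl-group side requires careful tracking of when a Hecke multiplication is length-increasing versus absorbing. The analogous difficulty was already met and resolved in type~C (Definition~\ref{defn:typeCperm} and Proposition~\ref{prop:crecursion}), so the strategy is to reduce to that computation wherever possible and isolate the genuinely new phenomenon, namely the single sign flip contributed by the $s_n$ factor when passing from a positive slot to a barred slot.
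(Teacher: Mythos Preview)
Your approach is essentially the same as the paper's: both arguments identify the maximal roots by reading off the coefficient of $(t_k-t_{k+1})^\vee$ in the simple-coroot expansions of $(t_1+t_{k+1})^\vee$ and $(t_1+t_2)^\vee$, feed these into Proposition~\ref{recursion} to obtain the recursive formulas for $z_d$, and then verify the statements about $u_B^d$ by a direct Hecke-product computation (which the paper, like you, records only as ``a direct calculation'').  Your mechanistic description of $u_B\cdot z_1$ as ``eject $u(1)$, sign-flip to $\overline{J}$, reinsert'' is more explicit than anything the paper writes down, but it is exactly the content of that direct calculation.

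One small inaccuracy worth flagging: your parenthetical remark that the type~B element $z_2=s_{t_1+t_2}$ is ``the same reflection as the $z_1$ appearing in type~C'' is not correct---in type~C one has $z_1=s_{2t_1}$, a different reflection in the hyperoctahedral group.  Consequently the identity $u_B^2=u_C^2$ (which holds by Definition~\ref{defn:typeBperm}) cannot be matched with $u_B\cdot z_2W_P$ simply by invoking Proposition~\ref{prop:crecursion}; it requires its own direct check.  The paper handles this the same way, by asserting that ``a direct calculation shows that $u_B^2$ is the minimal length coset representative of $u_B\cdot z_2W_P$,'' so your overall strategy is unaffected---just be aware that this step is an independent verification, not a corollary of the type~C case.
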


\begin{remark}
Unlike the type A and C cases, it should be noted that $z_2 \neq z_1 \cdot z_1$ in the type B case.
\end{remark}

We will provide an example before we prove Proposition \ref{prop:brecursion}.
\begin{example}
Consider $u_B=12\bar435$ in the Weyl group of $\OG(3,11)$. Then the Hecke product $u_B \cdot z_1$ is
\begin{align*}
u_B \cdot z_1&= 12\bar435 \cdot s_1s_2s_4s_3s_5s_4s_5s_3s_4s_2s_1=  21\bar435 \cdot s_2s_4s_3s_5s_4s_5s_3s_4s_2s_1\\
&= 2\bar4135 \cdot s_4s_3s_5s_4s_5s_3s_4s_2s_1= 2\bar4153 \cdot s_3s_5s_4s_5s_3s_4s_2s_1 = 2\bar4513 \cdot s_5s_4s_5s_3s_4s_2s_1\\
&= 2\bar451\bar3 \cdot s_4s_5s_3s_4s_2s_1= 2\bar45\bar31 \cdot s_5s_3s_4s_2s_1=2\bar45\bar3\bar1 \cdot s_3s_4s_2s_1= 2\bar4\bar35\bar1 \cdot s_4s_2s_1\\
&= 2\bar4\bar3\bar15\cdot s_2s_1= 2\bar3\bar4\bar15\cdot s_1=\bar32\bar4\bar15.
\end{align*}

Notice that $u_B^1=(2<\bar3<\bar4|1<5)$ is the minimum length coset representative of $u_B \cdot z_1W_P$. We also have that Hecke product $u_B \cdot z_2$ is
\begin{align*}
u_B \cdot z_2&= 12\bar435 \cdot s_2s_1s_3s_2s_4s_3s_5s_4s_5s_3s_4s_2s_3s_1s_2=1\bar4235 \cdot s_1s_3s_2s_4s_3s_5s_4s_5s_3s_4s_2s_3s_1s_2\\
&= \bar41235 \cdot s_3s_2s_4s_3s_5s_4s_5s_3s_4s_2s_3s_1s_2=\bar41325 \cdot s_2s_4s_3s_5s_4s_5s_3s_4s_2s_3s_1s_2\\
&= \bar43125 \cdot s_4s_3s_5s_4s_5s_3s_4s_2s_3s_1s_2=\bar43152 \cdot s_3s_5s_4s_5s_3s_4s_2s_3s_1s_2\\
&= \bar43512 \cdot s_5s_4s_5s_3s_4s_2s_3s_1s_2=\bar4351\bar2 \cdot s_4s_5s_3s_4s_2s_3s_1s_2=\bar435\bar21 \cdot s_5s_3s_4s_2s_3s_1s_2\\
&= \bar435\bar2\bar1 \cdot s_3s_4s_2s_3s_1s_2=\bar43\bar25\bar1 \cdot s_4s_2s_3s_1s_2=\bar43\bar2\bar15 \cdot s_2s_3s_1s_2=\bar4\bar23\bar15 \cdot s_3s_1s_2\\
&= \bar4\bar2\bar135 \cdot s_1s_2=\bar2\bar4\bar135 \cdot s_2=\bar2\bar1\bar435.
\end{align*}

Notice that $u_B^2=(\bar4<\bar2<\bar1|3<5)$ is the minimum length coset representative of $u_B \cdot z_2W_P$. 
\end{example}

\begin{proof}
First observe that the highest root is $t_1+t_2$ and its coroot expands as a sum as simple coroots as follows: \[t_1+t_2=(t_1-t_2)+2(t_2-t_3)+\cdots+2(t_{n-1}-t_n)+2t_n. \] 

For part (1) (i.e. $k=1$) it follows that $\alpha:=t_1+t_2 \in R^+\backslash R_P^+$ is a maximal root for any $d>0$ since coefficient of the simple coroot $t_1-t_2$ in the expansion of $t_1+t_2$ is 1. By Proposition \ref{recursion}, $s_{\alpha} \cdot z_{d-\alpha^\vee}=z_dW_P$, $z_1=s_{\alpha}$, and $z_d=z_1 \cdot \ldots \cdot z_1$. A direct calculation shows that $u_B^1$ is the minimal length coset representative of $u_B \cdot z_1W_P$. Next, part 1 of Definition \ref{defn:typeBperm} implies that $u_B^d=(u_B^{d-1})^1$. So, $u_B^d$ is the minimal length coset representative of $u_B^{d-1} \cdot z_1 W_P$. The result follows for part 1.

For part (2) (i.e. $1<k<n$) it follows that $\alpha=t_1+t_2 \in R^+\backslash R_P^+$ is a maximal root for any $d>1$ since coefficient of the simple coroot $t_k-t_{k+1}$ in the expansion of $t_1+t_2$ is 2. By Proposition \ref{recursion}, $s_{\alpha} \cdot z_{d-\alpha^\vee}=z_dW_P$ and $z_2=s_{\alpha}$. A direct calculation shows that $u_B^2$ is the minimal length coset representative of $u_B \cdot z_2W_P$. When $d>0$ is even, part 2b of Definition \ref{defn:typeBperm} implies that $u_B^d=(u_B^{d-2})^2$ since we know that $u_C^d=(u_C^{d-2})^2$ and $u_B^d=u_C^d$. So, $u_B^d$ is the minimal length coset representative of $u_B^{d-2} \cdot z_2 W_P$.

We will now consider the $d=1$ for part (2). Observe that \[t_1+t_{k+1}=(t_1-t_2)+\cdots+(t_k-t_{k+1})+2(t_{k+1}-t_{k+2})+\cdots+2t_n. \] Every root $\beta \in R^+$ such that $\beta>t_1+t_{k+1}$ takes on the form $\beta=t_1+t_j$ where $2 \leq j \leq k$. But the coroot of $t_1+t_j$ written as sum of simple coroots is \[t_1+t_{j}=(t_1-t_2)+\cdots+(t_{j-1}-t_j)+2(t_j-t_{j+1})+\cdots+2(t_k-t_{k+1})+2(t_{k+1}-t_{k+1})+\cdots+2t_n. \] It follows that $\delta:=t_1+t_{k+1} \in R^+\backslash R_P^+$ is a maximal root of $d=1$ by consider the coefficients of $t_k-t_{k+1}$ in the simple coroot expansion of $t_1+t_{k+1}$ and $t_1+t_j$. By Proposition \ref{recursion}, $s_{\delta} \cdot z_{d-\delta^\vee}=z_dW_P$ and $z_1=s_{\delta}$. A direct calculation shows that $u_B^1$ is the minimal length coset representative of $u_B \cdot z_1W_P$. When $d>0$ is odd, part 2c of Definition \ref{defn:typeBperm} indicates that $u_B^d=(u_B^{d-1})^1$. So, $u_B^d$ is the minimal length coset representative of $u_B^{d-1} \cdot z_1 W_P$. Finally, Proposition \ref{recursion} implies that if $d>0$ is even then $z_d=z_2 \cdot \ldots \cdot z_2$ ($d/2$-times). Similarly, if $d>0$ is odd then $z_d=z_2 \cdot \ldots \cdot z_2 \cdot z_1$ ($z_2$ appears $(d-1)/2$-times). Part 2 follows. This completes the proof. \end{proof}



The next theorem describes the curve neighborhoods of Schubert varieties in $\OG(k,2n+1)$.

\begin{thm} \label{Thm:Bcrv}
Let $\lambda \in \Lambda, \mu \in \mathcal{P}'(k,2n), u \in W^P, \gamma \in W_{01}$ be in bijection with one another. Consider the Schubert variety $X^{\lambda} \subset  \OG(k,2n+1) $. Then 
\[ \Gamma_d(X^\lambda)=X^{\lambda^d}; \Gamma_d(X^\mu)=X^{\mu^d}; \Gamma_d(X^u)=X^{u^d}; \Gamma_d(X^\gamma)=X^{\gamma^d}.\]
\end{thm}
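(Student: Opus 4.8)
The statement asserts that curve neighborhoods in $\OG(k,2n+1)$ are described compatibly across all four indexing models ($\Lambda$, $\mathcal{P}'(k,2n)$, $W^P$, $W_{01}$). The plan is to reduce everything to the permutation model, where Proposition \ref{prop:brecursion} already does the heavy lifting, and then transport along the bijections of Lemma \ref{lem:TypeBCbij}. More precisely, I would first invoke Proposition \ref{recursion} together with the structure statement preceding it (if $\Omega$ is a Schubert variety then $\Gamma_d^X(\Omega)=X^{w\cdot z_dW_P}$) to conclude that $\Gamma_d(X^u)=X^{u\cdot z_dW_P}$, where on the right we take the minimal-length coset representative. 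By Proposition \ref{prop:brecursion}(1)(b)(iv) and (2)(c)(vii), that minimal-length representative is exactly $u_B^d$ as defined in Definition \ref{defn:typeBperm}. This settles the permutation claim $\Gamma_d(X^u)=X^{u^d}$ outright.

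The remaining three equalities then follow formally from Proposition \ref{prop:B}: since $\lambda_B$, $\mu_B$, $u_B$, $\gamma_B$ are in bijection and Proposition \ref{prop:B} guarantees $\lambda_B^d$, $\mu_B^d$, $u_B^d$, $\gamma_B^d$ remain in bijection, the Schubert variety $X^{u^d}$ is simultaneously $X^{\lambda^d}$, $X^{\mu^d}$, and $X^{\gamma^d}$ under the corresponding index identifications. So the proof is essentially a two-line assembly: establish the permutation case via Proposition \ref{prop:brecursion}, then quote Proposition \ref{prop:B} (whose proof is deferred to Section \ref{sect:techproof}) to spread it to the other models. One should double-check the edge case $k=1$ separately, where Proposition \ref{prop:brecursion}(1) applies and the partition/word definitions degenerate (Definitions \ref{defn:typeBpartitionsd}(1), \ref{def:Bpart2}(1), \ref{defn:typeBperm}(1)); but there the single generator $z_1=s_{t_1+t_2}$ and the explicit formulas $u_B^1=(\bar2)$, $u_B^d=(\bar1)$ make the verification immediate.

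The genuine content is therefore \emph{not} in this theorem but is pushed into two earlier results: Proposition \ref{prop:brecursion}, which identifies the correct maximal roots in type B (the subtlety being that $z_2\neq z_1\cdot z_1$, so degree-$1$ and higher-degree steps use genuinely different roots $t_1+t_{k+1}$ versus $t_1+t_2$, as flagged in the Remark), and Proposition \ref{prop:B}, the dictionary compatibility. The main obstacle in the present proof is simply making sure the minimal-coset-representative bookkeeping in Proposition \ref{prop:brecursion} lines up exactly with Definition \ref{defn:typeBperm}'s parity-split recursion ($u_B^d=(u_B^{d-2})^2$ for $d$ even, $u_B^d=(u_B^{d-1})^1$ for $d$ odd), and confirming that the Hecke action $u\cdot z_dW_P$ indeed produces these via the iterated recursion $s_\alpha\cdot z_{d-\alpha^\vee}W_P=z_dW_P$ applied with the type-B maximal roots. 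Once that alignment is checked — which is exactly what the proof of Proposition \ref{prop:brecursion} accomplishes — the theorem is a formal consequence.
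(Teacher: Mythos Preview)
Your proposal is correct and matches the paper's approach exactly: the paper does not give a standalone proof of Theorem~\ref{Thm:Bcrv} but rather treats it as the formal assembly of (i) the general fact $\Gamma_d^X(X^w)=X^{w\cdot z_dW_P}$, (ii) Proposition~\ref{prop:brecursion} identifying $u_B^d$ as the minimal coset representative of $u_B\cdot z_dW_P$, and (iii) Proposition~\ref{prop:B} transporting this to the other three indexing models. Your diagnosis that the substantive content lives in Propositions~\ref{prop:brecursion} and~\ref{prop:B}, with the parity-split recursion and the $z_2\neq z_1\cdot z_1$ subtlety, is precisely how the paper organizes the argument (see the discussion after Theorem~\ref{thm:metathm} and the Remark following Proposition~\ref{prop:brecursion}).
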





\section{Minimum quantum degree calculation for types B and C Grassmannians}
\label{sect:typeCpostnikov}

The purpose of this section is to prove Theorem \ref{thm:visthm}. It's important to note for this section that the Weyl group in types B and C is the hyperoctohedral group on $2n$ elements. So, $W_C^P$ is the set of minimal length representatives of the hyperoctohedral group. Moreover, the hyperoctohendral group is a subgroup of the symmetric group on $2n$ elements $S_{2n}$. In this section we will use subscripts to indicate whether an element or an order is to be considered in type A or type C. For example, let $\leq_C$ denote the Bruhat order on $W_C$ in type $C$ and let $\leq_A$ denote the Bruhat order on $W_A:=S_{2n}$ in type A. We begin with a lemma and a proposition regarding the correspondence between the Bruhat order and partition inclusion on partitions in the set $\mathcal{P}'(k,2n)$. The next lemma is proved in \cite[Corollary 8.1.9]{BjornerBrenti}.

\begin{lemma} \label{lem:order}
Let $v,w \in W_C \subset W_A$. Then $v \leq_C w$ if and only if $v \leq_A w$. In particular, for any $v,w \in W_C^P$, \[v=(v(1)<v(2)<\cdots<v(k)) \leq w=(w(1)<w(2)<\cdots<w(k)) \] if and only if $v_i \leq w_i$ for all $1 \leq i \leq k$.
\end{lemma}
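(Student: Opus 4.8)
\textbf{Proof proposal for Lemma \ref{lem:order}.}

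The first statement---that $v \leq_C w$ iff $v \leq_A w$ for $v,w$ in the hyperoctahedral group $W_C$ viewed inside $S_{2n}$---is exactly \cite[Corollary 8.1.9]{BjornerBrenti}, so I would simply cite it rather than reprove it; the content of the lemma that needs an argument here is the translation of this fact into the explicit componentwise criterion on minimal-length coset representatives in $W_C^P$. The plan is therefore to reduce the statement about $W_C^P$ to the statement about $W_A$, and then to invoke the analogous (and standard) componentwise description of Bruhat order on Grassmannian permutations in type A.

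First I would recall that the Bruhat order on $W$ descends to a well-defined order on the coset space $W/W_P$, and that comparing cosets is the same as comparing their minimal-length representatives: for $v, w \in W^P$ one has $vW_P \leq wW_P$ in $W/W_P$ if and only if $v \leq w$ in $W$ (this is the standard ``subword'' characterization, or equivalently Deodhar's lifting property). Combining this with the first part of the lemma, the inequality $v \leq_C w$ for $v,w \in W_C^P$ is equivalent to $v \leq_A w$ where both are regarded as elements of $S_{2n}$. Now a minimal-length representative $w \in W_C^P$, when written out as a signed permutation, is determined by the increasing sequence $(w(1) < w(2) < \cdots < w(k))$ in the ordered alphabet $1 < 2 < \cdots < n < \bar n < \cdots < \bar 1$ (with the remaining values $w(k+1) < \cdots < w(n) \le n$ also forced to be increasing and below $n$); so as an element of $S_{2n}$ it is a Grassmannian permutation with its unique descent at position $k$, relative to the total order just displayed.

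The key step is then the classical fact that for two Grassmannian permutations $v, w \in S_{2n}$ with descent at $k$, written as increasing windows $v = (v(1) < \cdots < v(k))$ and $w = (w(1) < \cdots < w(k))$, one has $v \leq_A w$ in the Bruhat order if and only if $v(i) \leq w(i)$ for every $i = 1, \ldots, k$. I would prove this (or cite it; it is, e.g., the standard description of Bruhat order on the type A Grassmannian, cf.\ \cite{FW} or \cite{BjornerBrenti}) by the usual argument: $\leq_A$ on Grassmannian permutations is equivalent to containment of the associated Young diagrams, and diagram containment is precisely the componentwise inequality of the corresponding integer sequences. One subtlety to be careful about is that the order on the alphabet is $1 < \cdots < n < \bar n < \cdots < \bar 1$ rather than the usual $1 < 2 < \cdots < 2n$, so I would fix the order-isomorphism $i \mapsto i$, $\bar i \mapsto 2n+1-i$ at the outset and check that it carries $\leq_C$-representatives to genuine Grassmannian permutations in $S_{2n}$ and is compatible with both Bruhat orders; once that identification is in place, the componentwise criterion is immediate.

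The main obstacle is really bookkeeping rather than mathematics: making sure the passage from cosets to minimal representatives is clean (so that the componentwise statement is about the windows $(w(1),\ldots,w(k))$ and not about full one-line notations), and making sure the nonstandard alphabet order is handled consistently so that ``$v(i) \leq w(i)$'' in the statement means inequality in the order $1 < \cdots < n < \bar n < \cdots < \bar 1$. No deep new input is needed beyond \cite[Corollary 8.1.9]{BjornerBrenti} and the standard type A Grassmannian Bruhat order.
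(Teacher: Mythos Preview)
The paper does not prove this lemma at all: it simply cites \cite[Corollary 8.1.9]{BjornerBrenti}. Your plan to cite that corollary for the first assertion and then deduce the componentwise criterion is reasonable in spirit, but the deduction you sketch contains an actual error.

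You assert that an element $w\in W_C^P$, viewed inside $S_{2n}$ via $i\mapsto i$, $\bar i\mapsto 2n+1-i$, is a Grassmannian permutation with unique descent at $k$. This is false. Take $n=3$, $k=2$, and $w=(1<\bar 3\mid 2)\in W_C^P$; under the identification this becomes $142536\in S_6$, which has descents at positions $2$ and $4$. In general the signed-permutation symmetry $w(2n{+}1{-}i)=2n{+}1{-}w(i)$ forces a second descent at position $2n-k$ whenever there is one at $k$. So the step ``check that it carries $\leq_C$-representatives to genuine Grassmannian permutations'' would fail if you actually carried it out, and the type~A Grassmannian criterion cannot be applied directly.

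Here is how to repair your argument without abandoning its strategy. Elements of $W_C^P$, embedded in $S_{2n}$, have descents only at $k$ and $2n-k$; hence they lie in the set of minimal representatives for $S_{2n}/(S_k\times S_{2n-2k}\times S_k)$, the two-step partial flag. For that quotient the Bruhat order is: sorted first $k$ values componentwise $\leq$, \emph{and} sorted first $2n-k$ values componentwise $\leq$. The first condition is exactly your hypothesis $v(i)\le w(i)$. The second follows from the first by the symmetry $w(2n{+}1{-}i)=2n{+}1{-}w(i)$: the set of the first $2n-k$ values of $w$ is the complement of $\{2n{+}1{-}w(1),\dots,2n{+}1{-}w(k)\}$, and a short complement-of-ordered-sets calculation shows that $v(i)\le w(i)$ for $i\le k$ forces the required inequality on the first $2n-k$ values as well. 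With this correction your route goes through; alternatively, you can simply cite the type B/C Grassmannian Bruhat order directly from \cite{BjornerBrenti}, which is what the paper does.
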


As an immediate application of Lemma \ref{lem:order} we have the following useful statement regarding partition inclusion and the Bruhat order in types B and C.
\begin{prop} \label{prop:partin}
Let $X \in \{ \OG(k,2n+1), \IG(k,2n)\}$. We have $X^\mu \subset X^\lambda \subset X$ if and only if $\lambda \subset \mu$ for all $\lambda, \mu \in \mathcal{P}'(k,2n)$. That is, partition inclusion in $\mathcal{P}'(k,2n)$ corresponds to the Bruhat order in types B and C.
\end{prop}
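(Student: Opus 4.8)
The plan is to deduce Proposition \ref{prop:partin} directly from Lemma \ref{lem:order} together with the explicit bijection in Lemma \ref{lem:TypeBCbij}(2)--(3) between $\mathcal{P}'(k,2n)$, the $01$-words $W_{01}$, and the minimal-length coset representatives $W^P \subset W_C$. The first step is to translate the geometric statement into a Bruhat-order statement: for $\lambda,\mu \in \mathcal{P}'(k,2n)$ with associated representatives $u_\lambda, u_\mu \in W^P_C$, we have $X^\mu \subset X^\lambda$ if and only if $u_\mu \geq_C u_\lambda$ in the Bruhat order (opposite Schubert varieties reverse the order; here $X^\lambda = \overline{B^- u_\lambda . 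P}$ and a larger cell gives a larger closure, so containment of opposite Schubert varieties corresponds to $u_\mu \geq u_\lambda$ — I will state this carefully and fix conventions so the inequality comes out consistent with the partition-inclusion direction claimed).

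Second, I would reduce the comparison of $u_\lambda$ and $u_\mu$ to a componentwise comparison of their one-line notations on the first $k$ entries, which is exactly the content of Lemma \ref{lem:order}: $u_\lambda \leq_C u_\mu$ iff $u_\lambda(i) \leq u_\mu(i)$ for all $1 \leq i \leq k$, using the natural ordering $1 < \cdots < n < \overline{n} < \cdots < \overline{1}$. Third, and this is the combinatorial heart, I would show that this componentwise inequality on $W^P$ entries is equivalent to partition inclusion $\lambda \subseteq \mu$ in $\mathcal{P}'(k,2n) \subseteq \mathcal{P}(k,2n)$. The cleanest route is via $01$-words: under the bijection of Lemma \ref{lem:TypeBCbij}(3), partition inclusion corresponds to the componentwise (prefix-sum) dominance order on $01$-words — $\gamma_\lambda \leq \gamma_\mu$ iff in every prefix $\gamma_\mu$ has at least as many $1$'s as $\gamma_\lambda$ — and under Lemma \ref{lem:TypeBCbij}(2) the position of the $i$-th $1$ in the word is exactly $u(i)$, read in the linear order on $2n$ positions. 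A smaller position for every $1$ is precisely the prefix-dominance condition, so the three orders (componentwise on $u$, prefix-dominance on $\gamma$, inclusion on $\lambda$) all coincide. Since these bijections and this prefix/word translation are identical to the type A setup (the $01$-words for $\IG(k,2n)$ and $\OG(k,2n+1)$ being $01$-words for $\Gr(k,2n)$, as noted before Definition \ref{def:partbruh}), the equivalence ``componentwise $\iff$ inclusion'' is just the familiar type A fact applied inside the hyperoctahedral group.

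The main obstacle is bookkeeping rather than depth: I must make sure the passage $\mathcal{P}'(k,2n) \hookrightarrow \mathcal{P}(k,2n)$ is order-preserving and that the ``opposite Schubert variety'' convention lines up so that the final inequality reads $\lambda \subseteq \mu$ and not its reverse; the $\mathcal{P}'$ condition itself (the $D(\lambda)(i)=D(\lambda)(\bar i) \Rightarrow D(\lambda)(i)=0$ constraint) plays no role in the comparison — it only guarantees the $01$-word actually corresponds to an element of $W^P_C$ rather than a general permutation, which is what lets us invoke Lemma \ref{lem:order}. I would also need to confirm that if $\lambda \subseteq \mu$ with both in $\mathcal{P}'(k,2n)$ then the intermediate comparison never leaves the hyperoctahedral group, which is automatic since we are comparing two elements that are each already in $W^P_C$. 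With those conventions pinned down, the proof is a short chain: $X^\mu \subset X^\lambda \iff u_\mu \geq_C u_\lambda \iff u_\mu(i) \geq u_\lambda(i)\ \forall i \iff \gamma_\lambda \leq \gamma_\mu$ in prefix order $\iff \lambda \subseteq \mu$.
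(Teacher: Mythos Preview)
Your proposal is correct and follows exactly the route the paper takes: the paper states the proposition as an immediate application of Lemma~\ref{lem:order}, and your argument simply unpacks that application via the $01$-word bijection, which is the standard type~A dictionary between componentwise comparison on $W^P$ and partition inclusion. Your only extra work is the convention-checking you flag, which is necessary bookkeeping but introduces no new ideas beyond what the paper intends.
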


\begin{example} 
For an example of the Bruhat order corresponding to partition inclusion consider the case $n=5$ and $k=3$. First observe that
\[v_C=(1<5<\bar{4}|2<3) \in W_C^P\] corresponds to \[v_A=(1<5<\bar{4}|2<3<4<\bar{5}<\bar{3}<\bar{2}<\bar{1}) \in W_A^P.\] Moreover, both permutations correspond to the 01-word 1000101000 and the partition $\mu \in \mathcal{P}'(3,10) \subset \mathcal{P}(3,10)$ given by \[ \mu=\yng(7,4,3). \] Next observe that \[w_C=(1<\bar{4}<\bar{2}|3<5) \in W_C^P\] corresponds to \[w_A=(1<\bar{4}<\bar{2}|2<3<4<5<\bar{5}<\bar{3}<\bar{1}) \in W_A^P.\] Moreover, both permutations correspond to the 01-word 1000001010 and the partition $\lambda \in \mathcal{P}'(3,10) \subset \mathcal{P}(3,10)$ given by \[ \lambda=\yng(7,2,1). \] We see that $v_C \leq w_C$, $v_A \leq w_A$, and $\lambda \subset \mu$.
\end{example}



\subsection{Pictorial formulas}
The first theorem of this subsection establishes that the minimum degree that appears in the quantum product of Schubert classes are computed using Young diagrams.
\begin{thm} \label{thm:visthmb}
Let $X \in \{ \OG(k,2n+1), \IG(k,2n)\}$. For any pair of Schubert classes  $[X^\lambda] \star [X_\mu] \in \QH^*(X)$ where $\lambda, \mu \in \mathcal{P}'(k,2n)$, the smallest degree $d$ such that $q^d$ appears in  $[X^\lambda] \star [X_\mu]$ with nonzero coefficient is the smallest integer $d$ such that $\lambda^d \subset \mu$.
\end{thm}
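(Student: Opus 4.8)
\textbf{Proof proposal for Theorem \ref{thm:visthmb}.}

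The plan is to reduce this statement entirely to the machinery already assembled in the excerpt, so that the argument becomes a short chain of equivalences. By Lemma \ref{prop:mainprop}, for $u,v \in W^P$ the smallest power of $q$ in the quantum product $[X^u]\star[X_v]$ equals the minimal degree $d$ such that $X_v \subset \Gamma^X_d(X^u)$, i.e. the minimal $d$ with $v \leq u\cdot z_d W_P$ in the Bruhat order. So the first step is purely a translation: given $\lambda,\mu \in \mathcal{P}'(k,2n)$, let $u=u_\mu^B$ (or $u_\mu^C$) and $v = u_\lambda$ be the corresponding minimal-length coset representatives under the bijection of Lemma \ref{lem:TypeBCbij}(2). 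Wait — I need to be careful about which partition indexes which Schubert variety and which class is super- vs. sub-scripted; the convention in the excerpt has $[X^\lambda]$ cohomological (codimension) and $[X_\mu]$ homological, so I would match $\lambda$ to the opposite Schubert variety $X^\lambda$ and $\mu$ to $X_\mu$, and then invoke Lemma \ref{prop:mainprop} with the appropriate $u,v$.

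Next, I would invoke the curve-neighborhood computations: Theorem \ref{Thm:Ccrv} in type C and Theorem \ref{Thm:Bcrv} in type B give $\Gamma_d(X^\lambda) = X^{\lambda^d}$ with $\lambda^d$ computed in the $\mathcal{P}'(k,2n)$ model via Definitions \ref{def:Cpart2} and \ref{def:Bpart2}. (Equivalently, via Theorem \ref{thm:metathm}(1), $\lambda^d = \lambda\cdot z_d$ in the indexing set, which is exactly what Propositions \ref{prop:crecursion} and \ref{prop:brecursion} establish.) Combining, the smallest power of $q$ in $[X^\lambda]\star[X_\mu]$ is the smallest $d$ with $X_\mu \subset \Gamma_d(X^\lambda) = X^{\lambda^d}$, i.e. the smallest $d$ with $X_\mu \subset X^{\lambda^d}$. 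Finally, the containment $X_\mu \subset X^{\lambda^d}$ of opposite/ordinary Schubert varieties is, by Proposition \ref{prop:partin}, equivalent to the partition inclusion $\lambda^d \subset \mu$ (this is the whole point of introducing the model $\mathcal{P}'(k,2n)$: inclusion there matches the Bruhat order). Chaining these equivalences yields the theorem.

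The main obstacle I anticipate is not any single deep step — each ingredient is quoted from earlier in the paper — but rather making sure the three translations line up consistently: the homological/cohomological (sub/superscript) convention for Schubert classes, the direction of the Bruhat order versus partition inclusion (Proposition \ref{prop:partin} has an order reversal: $X^\mu \subset X^\lambda \iff \lambda \subset \mu$), and the fact that $\Gamma_d$ as applied in Lemma \ref{prop:mainprop} acts on the $X^u$-side while $X_v$ appears as a subvariety. I would therefore write the proof as an explicit display of equivalences,
\begin{align*}
q^d \text{ has nonzero coeff. in } [X^\lambda]\star[X_\mu]
&\iff X_\mu \subset \Gamma_d(X^\lambda) \\
&\iff X_\mu \subset X^{\lambda^d} \\
&\iff \lambda^d \subset \mu,
\end{align*}
citing Lemma \ref{prop:mainprop}, then Theorem \ref{Thm:Ccrv}/\ref{Thm:Bcrv}, then Proposition \ref{prop:partin} respectively, and then take the minimum over $d$ on both ends. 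A secondary, genuinely necessary check is that for $d=0$ the statement degenerates correctly (the classical intersection number $\langle [X^\lambda],[X_\mu]\rangle$ is nonzero iff $X^\lambda \cap X_\mu \neq \emptyset$ iff $\lambda \subset \mu$ with $\lambda^0 = \lambda$), which fits the same chain since $z_0 = \mathrm{id}$; I would state this case briefly rather than treat it separately.
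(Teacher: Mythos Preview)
Your proposal is correct and follows exactly the route the paper sets up: the theorem is a direct consequence of Lemma \ref{prop:mainprop} (reducing to curve neighborhoods), Theorems \ref{Thm:Ccrv}/\ref{Thm:Bcrv} (computing $\Gamma_d(X^\lambda)=X^{\lambda^d}$ in the $\mathcal{P}'(k,2n)$ model), and Proposition \ref{prop:partin} (Bruhat order $=$ partition inclusion), which is precisely the chain outlined in Theorem \ref{thm:metathm}. The one cosmetic point is that Proposition \ref{prop:partin} is stated for containments of the form $X^\mu\subset X^\lambda$, whereas you need the mixed containment $X_\mu\subset X^{\lambda^d}$; but since the content of that proposition is simply ``Bruhat order on $W^P$ agrees with partition inclusion on $\mathcal{P}'(k,2n)$,'' the mixed version follows immediately from standard facts about opposite versus ordinary Schubert varieties, and your final display of equivalences is valid.
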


\begin{example}
Recall Example \ref{ex:C}. For $\IG(5,2 \cdot 8)$ consider $\lambda=(11 \geq 11 \geq 11 \geq 4 \geq 4) \in \mathcal{P}'(5,2 \cdot 8)$ and $\mu=(7,7) \in  \mathcal{P}'(5,2\cdot 8).$ We have that 3 is the smallest degree $d$ such that $q^d$ appears in $[X^\lambda] \star [X_\mu]$ with nonzero coefficient.
\end{example}

\begin{example}
For $\OG(5,2 \cdot 8+1)$ consider $\lambda=(11 \geq 11 \geq 11 \geq 4 \geq 4) \in \mathcal{P}'(5,2 \cdot 8)$ and $\mu=(7,7) \in  \mathcal{P}'(5,2 \cdot 8).$ Then we have that 
\[ \lambda=\yng(11,11,11,4,4) \mbox{ and } \mu=\yng(7,7)\]
\begin{align*}
\lambda^2 &= \yng(9,2,2)\\
\lambda^3 &= \yng(1,1,1)\\
\lambda^4 &= \emptyset.
\end{align*}
Thus, 4 is the smallest degree $d$ such that $q^d$ appears in $ [X^\lambda] \star [X_\mu]$ with nonzero coefficient.
\end{example}

\section{Technical Proofs}\label{sect:techproof}

\subsection{Type C} 

We begin with a technical lemma to prove Proposition \ref{prop:C}. Let $\lambda_C \in \Lambda_C$, $\mu_C \in \mathcal{P}'(k,n)$, $u_C \in W^{P}$, and $\gamma_C \in W_{01}$ are in bijection with one another and $d \geq 0$. The permutation $u^d_C \in W^{P}$, the word $\gamma^d_C \in W_{01}$, and the partition $\mu^d_C \in \mathcal{P}'(k,n)$ are in bijection with each other by a direct application of Propositions \ref{prop:crecursion} and \ref{prop:arecursion}. We will complete the proof by showing that $\lambda^1_C \in \Lambda_C$ and $u^1_C \in W^{P}$ are in bijection.

In this section let $|i|=\min \{i,2n+1-i \}$.

\begin{lemma} \label{lem:backend}
Let $u\in W^P$ where $u \mapsto \lambda \in \Lambda$. Suppose that $u(m)=\overline{\alpha}$ where $1 \leq \alpha \leq n$ and $\{1,2,\cdots, \alpha \} \subset \{|u(1)|, |u(2)|, \cdots, |u(k)| \}$. Then $\lambda_m=0$.
\end{lemma}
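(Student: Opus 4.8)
The statement relates the bijection $u \mapsto \lambda$ of Lemma~\ref{lem:TypeBCbij}(1) to the vanishing of a single part. Recall the formula
\[
\lambda_j = 2n+1-k-u(j) + \#\{i<j : u(i)+u(j) > 2n+1\}.
\]
The plan is to apply this with $j=m$ and $u(m) = \overline{\alpha} = 2n+1-\alpha$, so that $2n+1-k-u(m) = \alpha-k$, giving
\[
\lambda_m = \alpha - k + \#\{i<m : u(i)+\overline{\alpha} > 2n+1\}.
\]
Since $u(i) + \overline{\alpha} > 2n+1$ is equivalent to $u(i) > \alpha$, i.e. (in the ordering $1<\cdots<n<\overline n<\cdots<\overline 1$) to $|u(i)| \le \alpha$ with $u(i)$ barred, or more carefully $u(i) > \overline{\alpha}$ is false when $u(i)$ comes before $\overline\alpha$... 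I need to be careful: $u(i) + \overline{\alpha} > 2n+1 \iff u(i) > \alpha$ as integers in $\{1,\ldots,2n\}$. So I would count how many of the entries $u(1),\ldots,u(m-1)$ exceed $\alpha$ as integers.

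\textbf{Key steps.} First, translate the hypothesis: $\{1,2,\ldots,\alpha\} \subset \{|u(1)|,\ldots,|u(k)|\}$ means that for each $c \in \{1,\ldots,\alpha\}$ exactly one of $c$ or $\overline c = 2n+1-c$ appears among the values $u(1),\ldots,u(k)$. Second, since $u \in W^P$ has $u(1)<u(2)<\cdots<u(k)$ and $u(m) = \overline\alpha$, every entry before position $m$ is smaller than $\overline\alpha$ in the chain order, hence lies in $\{1,\ldots,n\} \cup \{\overline n,\ldots,\overline{\alpha+1}\}$ as a symbol, i.e. as an integer is $\le n$ or is in $\{2n+1-n,\ldots,2n+1-(\alpha+1)\} = \{n+1,\ldots,2n-\alpha\}$; in all cases such an entry is $\le 2n-\alpha$ as an integer, so $u(i) > \alpha$ exactly when $u(i) \in \{\alpha+1,\ldots,n\}\cup\{n+1,\ldots,2n-\alpha\}$... hmm, this needs the count to come out to exactly $k-\alpha$. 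The cleanest route: among $u(1),\ldots,u(m-1)$, count those $u(i)$ with $u(i) > \alpha$ (as integers). Among $u(1),\ldots,u(k)$ there are $k$ values; by the hypothesis, for each $c\le \alpha$ exactly one of $c,\overline c$ is a value, and $\overline c = 2n+1-c > 2n+1-\alpha \ge \alpha$ when... so exactly the values $\le \alpha$ are the "small" ones, and there are (number of $c\le\alpha$ with $c$ itself a value) of them. I would show that all entries $\le \alpha$ occur in positions $< m$ (since $u(m) = \overline\alpha \ge \alpha+1$ as an integer... wait $\overline\alpha = 2n+1-\alpha$, which is large), and conversely that no entry in a position $\ge m$ is $\le \alpha$, so that $\#\{i < m : u(i) > \alpha\} = \#\{i \le k : u(i) > \alpha\} = k - (\text{number of unbarred } c \le \alpha \text{ among values})$. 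Combined with $\#\{\text{barred }\overline c, c\le\alpha, \text{ among values}\}$ plus the entry $u(m)=\overline\alpha$ itself, and using the hypothesis that all of $1,\ldots,\alpha$ are "hit", one gets $\alpha - k + \#\{i<m: u(i)>\alpha\} = 0$.

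\textbf{Main obstacle.} The bookkeeping with the two orderings — the chain order $1<\cdots<n<\overline n<\cdots<\overline 1$ used to arrange $u(1)<\cdots<u(k)$, versus the integer order used in the inequality $u(i)+u(j)>2n+1$ — is the delicate part, and I expect sign/off-by-one errors there. The key structural fact to nail down is: \emph{the positions before $m$ are exactly the positions carrying the values $\{1,\ldots,n\} \cap \{u(1),\ldots,u(k)\}$ together with the values $\overline c$ for $c \le \alpha$ that are among the $u$'s}, i.e. everything $<\overline\alpha$ in the chain order. Once I know precisely which values sit in positions $1,\ldots,m-1$, the count $\#\{i<m : u(i)>\alpha \text{ as integers}\}$ becomes a finite combinatorial identity forced by the hypothesis "$\{1,\ldots,\alpha\}$ is covered," and plugging into the $\lambda_m$ formula yields $\lambda_m = 0$. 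I would organize the argument by first reducing to this description of the initial segment of $u$, then doing the count.
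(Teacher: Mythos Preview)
Your approach is exactly the paper's: plug $u(m)=\overline{\alpha}$ into the formula $\lambda_m = 2n+1-k-u(m) + \#\{i<m : u(i)+u(m)>2n+1\}$, reducing to the claim $\#\{i<m : u(i)>\alpha\}=k-\alpha$, which the paper simply asserts ``by a direct calculation.'' Your bookkeeping worry dissolves once you note that positions $i>m$ carry precisely the values $\overline{c}$ with $c<\alpha$ (there are $\alpha-1-s$ of these, where $s$ is the number of unbarred $c\le\alpha-1$ among the values), so $m-1 = k-1-(\alpha-1-s)$ and hence $\#\{i<m:u(i)>\alpha\}=(m-1)-s=k-\alpha$.
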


\begin{proof}
By a direct calculation we have $\#\{ i<m: w(i)+\overline{\alpha}>2n+1 \}=k-\alpha$. Then we have that
\begin{align*}
\lambda_m&= 2n+1-k-w(m)+\# \{ i<m: w(i)+w(j)>2n+1 \}\\
&= \alpha-k+\# \{ i<m: w(i)+\overline{\alpha}>2n+1 \}\\
&= 0.
\end{align*}\end{proof}

Next is the proof of Prop \ref{prop:C}.

\begin{proof}
If $\lambda_1+\lambda_j > 2(n-k)+j-1$ for all $2 \leq j \leq k$ then it's straight forward to show that $v^1\mapsto \lambda^1$.

Otherwise, find the smallest $j$ such that $\lambda_1+\lambda_j \leq 2(n-k)+j-1$ and consider 
\[ 
\lambda^1=(\lambda_2 \geq \lambda_3 \geq \cdots \geq \lambda_{j-1} \geq \lambda_j-1 \geq \cdots \geq \lambda_{k-1}-1 \geq 0) \in  \Lambda
\]
 where $-1$'s are replaced by 0. Let \[ v=(v(1)<v(2)<\cdots<v(m)<v(m+1)<\cdots<v(k)) \in W^{P} \] where $v(k-i)=\overline{1+i}$ for $0 \leq i \leq k-m-1$ and $v(m)<\overline{k-m+1}$ and 
\[ 
v^1=(v(2)<\cdots<v(m)<\overline{1+k-m}<v(m+1)<\cdots<v(k)) \in W^{P}.
\]
It's clear that $v^1(l-1) =v(l) \mapsto \lambda_{l}$ for $2 \leq l \leq j-1$.

Let $l$ be an integer where $j \leq l \leq m$ and $\lambda_l \geq 1$. Then we have the following
\begin{align*}
v^1(l-1)=v(l)&= 2n+1-k-\lambda_l+\# \{ i \leq l: \lambda_i+\lambda_l \leq 2(n-k)+l-i \}\\
&= 2n+1-k-\lambda_l+1+\# \{ 2 \leq i \leq l: \lambda_i+\lambda_l \leq 2(n-k)+l-i \}\\
&= 2n+1-k-(\lambda_l-1)+\# \{ 2 \leq i \leq l: \lambda_i+\lambda_l \leq 2(n-k)+l-i \}
\end{align*}
So $v^1(l-1) \mapsto \lambda_l-1$ for $j \leq l \leq m$ and $\lambda_l \geq 1$.

Let $l$ be an integer where $j \leq l \leq m$ and $\lambda_l =0$. Observe the following
\begin{align*}
\lambda_l&= 2n+1-k-v(l)+\# \{i<l: v(i)+v(l)>2n+1 \}\\
& \geq 2n+1-k-v(l)+\# \{2 \leq i<l: v(i)+v(l)>2n+1 \}\\
&= 2n+1-k-v^1(l-1)+\# \{ i<l: v^1(i)+v^1(l-1)>2n+1 \}\\
&=\lambda^1_{l-1}.
\end{align*}
Therefore, $\lambda_l=\lambda^1_{l-1}=0$ and $v^1(l-1) \mapsto 0=\lambda_l$ for $j \leq l \leq m$ and $\lambda_l = 0$.

By Lemma \ref{lem:backend}, $\lambda_l=0$ for $m+1 \leq l \leq k$ and $v^1(m) \mapsto 0$ for $m \leq l \leq k$. So $v^1(l-1) \mapsto 0=\lambda_l$ for all $m+1 \leq l \leq k$. Observe that $v^1(k)=\bar{1} \mapsto 0=\lambda^1_k$. The result follows.
\end{proof}

\subsection{Type B}Next is the argument of Proposition \ref{prop:B}. First observe that this is clear for the $k=1$ case. The cases where $\lambda_B^d=\lambda_C^d$ have already been established by the definitions of $u_C^d$ and $u_B^d$ and $\lambda_C^d$ and $\lambda_B^d$ when $d>1$ and even. We now must consider the case where $d=1$.

Let $\lambda_B \in \Lambda_B$, $\mu_B \in \mathcal{P}'(k,n)$, $u_B \in W^{P}$, and $\gamma_B \in W_{01}$ are in bijection. The permutation $u^1_B \in W^{P}$, the word $\gamma^1_B \in W_{01}$, and the partition $\mu^1_B \in \mathcal{P}'(k,n)$ are in bijection with each other by a direct application of Proposition \ref{prop:brecursion}. We will complete the proof by showing that 
\begin{enumerate}
\item The partition $\lambda^1_B \in \Lambda_C$ and $u^1_C \in W^{P}$ are in bijection;
\item The partition $\mu^1_B \in \mathcal{P}'(k,n)$ and $\gamma^1_B \in W_{01}$ are in bijection.
\end{enumerate}

We begin by showing the first listed bijection holds.

\subsubsection{The partition $\lambda^1_B \in \Lambda_C$ and $u^1_C \in W^{P}$ are in bijection}
\begin{prop}
Let $d=1$ and $\lambda_1-\ell(\lambda_B) \geq 2(n-k)+1$ then $u_B^1$ is in bijection with $\lambda_C^1$ with all but the last $(2n-k-\lambda_1)$ 0's interchanged with 1's.
\end{prop}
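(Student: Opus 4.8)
The strategy is to track, step by step, what the permutation operation $u_B \mapsto u_B^1$ from Definition \ref{defn:typeBperm} does to the window notation, and then translate the result back into the partition language of $\Lambda$ via the bijection of Lemma \ref{lem:TypeBCbij}(1). Under the hypothesis $\lambda_1 - \ell(\lambda_B) \geq 2(n-k)+1$, the permutation $u_B$ has a long tail of barred entries: since $\ell(\lambda_B) = \ell$ is the number of nonzero parts, the entries $u_B(\ell+1) < \cdots < u_B(k)$ are exactly the smallest available barred letters, by Lemma \ref{lem:backend} applied in reverse. First I would pin down precisely what $J = \min(\{1,\dots,n\}\setminus\{|u(1)|,\dots,|u(k)|\})$ and $j_0$ are under this hypothesis, using the size constraint on $\lambda_1$ to guarantee that the newly inserted letter $\bar J$ lands in the tail (so $j_0 > \ell$, or the relevant position), and that the "delete $u(1)$, shift, insert $\bar J$" operation on the permutation side corresponds on the partition side to the operation in Definition \ref{defn:typeCpartitions} that produces $\lambda_C^1$.

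Next I would compare this with $\lambda_B^1$ as given in Definition \ref{defn:typeBpartitions1}, case (1): in that regime the extra parts equal to $1$ appear, in number $1 + 2k + \lambda_1 - 2n - \ell(\lambda_B)$ (or the $\ell_1$ variant), followed by $2n-k-\lambda_1$ zeros. The claim to be proved is the comparison statement: $u_B^1$ corresponds to a partition obtained from $\lambda_C^1$ by flipping all but the last $2n-k-\lambda_1$ of the trailing $0$'s into $1$'s. In 01-word language (using Lemma \ref{lem:TypeBCbij}(2)), flipping a trailing $0$ to a $1$ at position close to the right end corresponds to increasing a small part of the partition from $0$ to $1$; so I would argue this combinatorially by writing both $\gamma$-words explicitly. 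Concretely: in type C the word $\gamma_C^1$ ends in a block of $0$'s coming from the $(2n-k-\lambda_1)$ leftward boundary steps past column $\lambda_1$; in type B the extra $1$ inserted by the $k=1$ vs $k>1$ recursion, together with the "append then shift" mechanism of $u_B^1$, pushes $1$'s into that block except for the final $2n-k-\lambda_1$ entries. I would make this rigorous by counting: the number of $1$'s in positions $> \lambda_1+1$ in $\gamma_B^1$ minus the number of such $1$'s in $\gamma_C^1$ should equal $2k - (2n - \lambda_1) + 1 - \ell$ (matching the exponent in Definition \ref{defn:typeBpartitions1}), and this is a direct bookkeeping computation once the positions of $J$ and the tail are fixed.

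The main obstacle I anticipate is handling the sub-case split in Definition \ref{defn:typeBpartitions1}(1)(b) between $\ell_1(\lambda_B) \geq j$ and $\ell_1(\lambda_B) < j$ — i.e., whether the part being decremented in the type C operation reaches down into the region that is about to be converted to $1$'s, or not. In permutation terms this is the question of whether the inserted letter $\bar J$ sits to the left or to the right of the "breakpoint" $j$ (the smallest index with $\lambda_1 + \lambda_j \leq 2(n-k)+j-1$). I would dispose of this by observing that the condition $\ell_1(\lambda_B) \geq j$ is equivalent to a precise inequality on the window entries $u_B(j)$ versus $\bar J$, and then checking that in each branch the formula for $\lambda_B^1$ in the definition is exactly $\lambda_C^1$ with the prescribed $0 \to 1$ flips; the two branches differ only in whether a part equal to $1$ in $\lambda$ survives as $1$ or gets "absorbed," and the flip count is engineered to absorb that discrepancy. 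Once these position computations are in place, the rest is routine substitution into the two bijection formulas of Lemma \ref{lem:TypeBCbij}(1), exactly parallel to the type C argument in the proof of Proposition \ref{prop:C}.
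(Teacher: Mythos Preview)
Your overall strategy---compute $u_B^1$ explicitly from Definition \ref{defn:typeBperm}, then translate back to a partition and compare with $\lambda_C^1$---is correct and is what the paper does. But the execution differs in two ways worth noting.

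First, the paper never passes through 01-words for this proposition. It works entirely with the explicit bijection formula of Lemma \ref{lem:TypeBCbij}(1). The key opening observation is simply that $u_B(1)=2n+1-k-\lambda_1$, and the hypothesis $\lambda_1-\ell(\lambda_B)\geq 2(n-k)+1$ rewrites as $k-\ell(\lambda_B)\geq u_B(1)$, which immediately forces $\{1,\dots,u_B(1)\}\subset\{|u_B(1)|,\dots,|u_B(k)|\}$. From this one reads off directly (via Lemma \ref{lem:backend}) that the last $u_B(1)-1=2n-k-\lambda_1$ parts of the partition corresponding to $u_B^1$ are zero. The paper then writes $u_C^1$ and $u_B^1$ side by side, observes they agree on an initial segment (giving agreement with $\lambda_C^1$ there), and for the remaining indices $i_0$ in the intermediate range plugs into the formula $\lambda_{i_0}=2n+1-k-w(i_0)+\#\{i<i_0:w(i)+w(i_0)>2n+1\}$ to get the value $1$ by a one-line count. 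That is the entire proof: no word combinatorics, no counting of ``flipped positions.'' Your 01-word route would also work, but it is a longer path to the same place.

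Second, your anticipated obstacle---the $\ell_1(\lambda_B)\geq j$ versus $\ell_1(\lambda_B)<j$ split from Definition \ref{defn:typeBpartitions1}(1)(b)---is not relevant to \emph{this} proposition. The statement only asserts what $u_B^1$ maps to relative to $\lambda_C^1$; it does not mention $\lambda_B^1$ as defined in Definition \ref{defn:typeBpartitions1}. Matching the result with that definition is the ambient goal of the subsection, but it is not part of the proposition you are proving here, and the paper's proof does not engage that case split at all. You can drop that discussion entirely and the argument becomes much shorter.
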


\begin{proof}
First note that $\lambda_1-\ell(\lambda_B) \geq 2(n-k)+1$ is equivalent to $k-\ell(\lambda_B) \geq 2n+1-k-\lambda_1$
Let $\lambda_B \mapsto u_B$. By the bijection we have that $2n+1-k-\lambda_1=u_B(1)$. It follows that $\{1,2, \cdots, u_B(1) \} \subset \{ |u_B(1)|, |u_B(2)|, \cdots, |u_B(k)| \}.$ Thus, $\{1,2, \cdots, u_B(1)-1 \} \subset \{ |u_B^1(1)|, |u_B^1(2)|, \cdots, |u_B^1(k)| \}.$ In particular, the last $u_B(1)-1=2n-k-\lambda_1$ parts of $\lambda_B^1$ are zero.

Let $m$ be the largest integer such that $\lambda_m>0$. By Lemma \ref{lem:backend} there exists $j \notin \{ |u(1)|, |u(2)|,\cdots, |u(k)| \}$ for some $u(m) < j \leq \bar{1}$ and $1 \leq |j| < |u(m)|$. Let $J$ be the largest such $j$. Next observe that
\[
u_C^d:=\left(\widehat{u(1)}<u(2)< \cdots <u(m_1)<u(m_2)<\cdots<u(k)< \bar{1} \right)
\] 
and  \[ u_B^1:=\left(\widehat{u(1)}<u(2)<u(3)< \cdots<u(m)<\cdots<u(m_1)<J<u(m_2) <\cdots<\widehat{ \overline{u(1)}} \cdots<u(k) \right).\] Also, $\{1, 2,  \cdots, \widehat{u(1)}, \cdots, |J| \} \subset \{|u(1)|, |u(2)|, \cdots, |u(k)| \}$ and $u^1_B(i)=u^1_C(i)$ for $1 \leq i \leq m_1$. The second inequality indicates that $(\lambda_B^1)_i=(\lambda_C^1)_i$ for $1 \leq i \leq m_1$ and nonzero. 

By Lemma \ref{lem:backend} we have that $(\lambda_C^1)_i=0$ for $m_2\leq i \leq k$. Consider $(\lambda_B^1)_{i_0}$ for some $i_0, m_2 \leq i_0 \leq k-(2n-k-\lambda_1)$.

By a direct calculation we have $\#\{ i<i_0: w(i)+w(i_0)>2n+1 \}=k-|w(i_0)|+1$. Then we have that
\begin{align*}
\lambda_{i_0}&= 2n+1-k-w(i_0)+\# \{ i<m: w(i)+w(i_0)>2n+1 \}\\
&= |w(i_0)|-k+\# \{ i<m: w(i)+w(i_0)>2n+1 \}\\
&= 1.
\end{align*}
The result follows.
\end{proof}

\begin{prop}
Let $d=1$ and $\lambda_1-\ell(\lambda_B) \leq 2(n-k)$. Then $u_B^1$ is in bijection with $\lambda_C^1$.
\end{prop}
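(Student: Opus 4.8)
The plan is to mirror the proof of the previous proposition (the case $\lambda_1 - \ell(\lambda_B) \geq 2(n-k)+1$), but now in the regime where the ``wingtip'' of the partition does not reach far enough to force the creation of a strip of $1$'s; in this regime the type~B line-neighborhood operator collapses to the type~C one, so it suffices to invoke Proposition~\ref{prop:C} once we verify the permutation-side operators agree. Concretely, I would start from $\lambda_B \mapsto u_B$ under the bijection of Lemma~\ref{lem:TypeBCbij}(1), and translate the hypothesis $\lambda_1 - \ell(\lambda_B) \leq 2(n-k)$ into a statement about $u_B$: since $u_B(1) = 2n+1-k-\lambda_1$ and the position of the last nonzero part $\lambda_{\ell(\lambda_B)}$ controls how many of the bars $\overline{1}, \overline{2}, \dots$ already occur among $|u_B(1)|, \dots, |u_B(k)|$, the inequality says precisely that $J := \min(\{1,\dots,n\}\setminus\{|u_B(1)|,\dots,|u_B(k)|\})$ satisfies $\bar J$ lying to the \emph{left} of (or at) the ``down-step'' region, equivalently $j_0 \leq k$ need not hold / the relevant $J$ is small enough that Definition~\ref{defn:typeBperm}(2) produces the same sequence as Definition~\ref{defn:typeCperm}(1). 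I would make this equivalence explicit and then observe that the ``Otherwise'' clause of Definition~\ref{defn:typeBpartitions1} literally sets $\lambda_B^1 := \lambda_C^1$, so there is nothing to check on the partition side beyond consistency.

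The key steps, in order, are: (1) Rewrite the hypothesis $\lambda_1 - \ell(\lambda_B) \leq 2(n-k)$ using the bijection as $k - \ell(\lambda_B) \leq 2n-k-\lambda_1 = u_B(1)-1$, and deduce which of $\{1,\dots,n\}$ are missing from $\{|u_B(1)|,\dots,|u_B(k)|\}$ — in particular that the smallest missing value $J$ is at most $u_B(1) = 2n+1-k-\lambda_1$, hence $\bar J \geq u_B(1)$ is large, so $\bar J$ is a legitimate insertion target. (2) Compare this to the defining rule for $u_C^1$ in Definition~\ref{defn:typeCperm}: show that in this regime the element to be deleted (the first entry $u_B(1)$) and the element to be inserted ($\bar J$) coincide with what Definition~\ref{defn:typeCperm}(1) prescribes, splitting into the subcases $u(k) < \bar 1$ versus the ``tail of consecutive bars'' case exactly as in Definition~\ref{defn:typeCperm}(1)(a),(1)(b). (3) Conclude $u_B^1 = u_C^1$ as elements of $W^P$. (4) Invoke Proposition~\ref{prop:C} (more precisely the part of its proof establishing $\lambda_C^1 \leftrightarrow u_C^1$) together with the ``Otherwise'' clause of Definition~\ref{defn:typeBpartitions1} which sets $\lambda_B^1 = \lambda_C^1$, giving $\lambda_B^1 \leftrightarrow u_B^1$.

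I expect the main obstacle to be step~(1)--(2): carefully tracking, via the somewhat involved bijection formula $w(j) = 2n+1-k-\lambda_j + \#\{i<j : \lambda_i + \lambda_j \leq 2(n-k)+j-i\}$, exactly which entries of $u_B$ are barred and where the ``missing'' value $J$ sits relative to the block of trailing bars $\overline{j_0+1} < \dots < \overline{1}$. The inequality $\lambda_1 - \ell(\lambda_B) \leq 2(n-k)$ must be shown to be precisely the condition that no $1$'s need to be appended — i.e. that the complement set $\{1,\dots,n\}\setminus\{|u_B(i)|\}$ is ``small'' in the right sense — and this bookkeeping, while elementary, is where sign-of-inequality errors creep in. A clean way to organize it is to argue contrapositively against the previous proposition's hypothesis: the two propositions together must cover all $\lambda_B$, and the previous one handled $\lambda_1 - \ell(\lambda_B) \geq 2(n-k)+1$ by showing $u_B^1$ differs from $u_C^1$ exactly by interchanging certain $0$'s with $1$'s (equivalently, the number of such interchanges is $\max(0,\, 1 + 2k + \lambda_1 - 2n - \ell(\lambda_B))$, which is $0$ precisely when $\lambda_1 - \ell(\lambda_B) \leq 2n - 2k - 1 + \dots$); so in our regime that count is zero and $u_B^1 = u_C^1$ follows immediately, reducing everything to Proposition~\ref{prop:C}.
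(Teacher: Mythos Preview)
Your proposal is correct and follows essentially the same approach as the paper: both argue that the hypothesis $\lambda_1 - \ell(\lambda_B) \le 2(n-k)$ (rewritten as $k - \ell(\lambda_B) \le u_B(1)-1$ via $u_B(1) = 2n+1-k-\lambda_1$) forces $u_B^1 = u_C^1$, after which Proposition~\ref{prop:C} and the ``Otherwise'' clause of Definition~\ref{defn:typeBpartitions1} finish the job. The paper's proof is extremely terse---it compresses your steps (1)--(3) into a single appeal to Lemma~\ref{lem:backend} to locate the missing absolute value relative to $u_B(1)$---so your more explicit case analysis of $J$ and $j_0$ is a faithful unpacking of the same argument rather than a different route.
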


\begin{proof}
First note that $\lambda_1-\ell(\lambda_B) \leq 2(n-k)$ is equivalent to $k-\ell(\lambda_B) \leq 2n-k-\lambda_1$.
Let $\lambda_B \mapsto u_B$. By the bijection we have that $2n-k-\lambda_1=u_B(1)-1$. In particular, by \ref{lem:backend}, $\overline{u_B(1)} \notin \{ |u(1)|,|u(2)|, \cdots, |u(k)| \}$. Thus $u_B^1=u_C^1$. The result follows.
\end{proof}

\subsubsection{The partition $\mu^1_B \in \mathcal{P}'(k,2n)$ and $\gamma^1_B \in W_{01}$ are in bijection.} 

\begin{lemma}
The partition $\mu^1_B \in \mathcal{P}'(k,2n)$ and $\gamma^1_B \in W_{01}$ are in bijection.
\end{lemma}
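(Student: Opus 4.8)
The goal is to verify that the partition $\mu_B^1 \in \mathcal{P}'(k,2n)$ defined by Definition \ref{def:Bpart2} corresponds, under the bijection of Lemma \ref{lem:TypeBCbij}(3), to the $01$-word $\gamma_B^1$, which by Definition \ref{defn:binarywords}(2) is the word in bijection with $u_B^1$. Since the bijections in Lemma \ref{lem:TypeBCbij} are already established, it suffices to describe how the operation $\mu_B \mapsto \mu_B^1$ acts on the boundary word $D(\mu_B)$ and to match this with the operation $u_B \mapsto u_B^1$ from Definition \ref{defn:typeBperm}(2a), transported through the bijection between $W^P$ and $W_{01}$. The key translation is that the bijection of Lemma \ref{lem:TypeBCbij}(2) records the positions of the ones of the word as the values $u(1) < \cdots < u(k)$; so to track $\gamma_B^1$ it is enough to track how the set $\{u_B(1), \ldots, u_B(k)\}$ changes.

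\textbf{Key steps.} First I would recall, using Definition \ref{defn:typeBperm}(2a), that $u_B^1$ is obtained from $u_B$ by deleting the smallest value $u_B(1)$ and inserting $\bar{J}$, where $J$ is the minimal element of $\{1, \ldots, n\}$ not among $\{|u_B(1)|, \ldots, |u_B(k)|\}$. In terms of the $01$-word $\gamma_B$, this means: erase the first $1$ (at position $u_B(1)$) and place a $1$ in position $\bar J = 2n+1-J$. I would then identify the position $\bar J$ combinatorially in $\gamma_B$: since $J$ is the first ``missing'' column index from the left, position $J$ of $\gamma_B$ is the first $0$ that is not ``balanced'' — equivalently, it is governed by the $m$-wingtip symmetry of $\mu_B$, with $m$ recording exactly how far the initial boundary segment of $\mu_B$ agrees with that of its transpose. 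Concretely, $\bar m$ (in the $\bar i = 2n+1-i$ notation) is the position where the first violation of symmetry occurs, and the two cases $D(\mu_B)(\bar m) = 1$ versus $D(\mu_B)(\bar m) = 0$ in Definition \ref{def:Bpart2}(2a)(i) correspond exactly to whether position $\bar J$ already holds a $1$ (forcing a shift, hence the repeated part $\mu_i, \mu_i$) or a $0$ (a clean insertion in column $j$, giving the new part equal to $j$). Second I would carry out the bookkeeping: deleting the first $1$ and shifting the remaining ones left by one position corresponds on partitions to the operation $(\mu_2 - 1 \geq \mu_3 - 1 \geq \cdots)$ — this is precisely the type-A/type-C line-neighborhood move of Definitions \ref{def:typeAcombforcurve}(1) and \ref{def:Cpart2} — and then inserting a $1$ in position $\bar J$ adjusts exactly one part: in case (i) it duplicates $\mu_i$ (reading off a row that does not shrink), and in case (ii) it resets the relevant part to the column index $j$. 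Matching these two descriptions row by row finishes the $d=1$ case. Finally I would note that the $d > 1$ even case is $\mu_B^d = \mu_C^d$ by definition, already covered by Proposition \ref{prop:C}, and the $d>1$ odd case is $(\mu_B^{d-1})^1$, so it follows from the $d=1$ case by induction, with the $k=1$ case being immediate.

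\textbf{Main obstacle.} The delicate point is pinning down the position $\bar J$ in $\gamma_B$ purely in terms of the wingtip symmetry parameter $m$ of $\mu_B$, and then verifying that the two sub-cases of Definition \ref{def:Bpart2}(2a)(i) line up with $D(\mu_B)(\bar m) = 1$ versus $D(\mu_B)(\bar m) = 0$. This requires a careful translation between the three descriptions of the position of a box: as a value $\bar J$ of a signed permutation, as an index in the boundary word $D(\mu_B)$, and as a (row, column) coordinate pair in the Young diagram. In particular one must check that the ``$i$th row'' and ``$j$th column'' appearing in Definition \ref{def:Bpart2}(2a) are exactly the row and column determined by the boundary step at position $\bar m$, and that after the left-shift of the other parts the insertion produces a genuine partition lying in $\mathcal{P}'(k,2n)$ (i.e., that the resulting boundary word still satisfies the $D(\lambda)(i) = D(\lambda)(\bar i) \Rightarrow D(\lambda)(i) = 0$ constraint). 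I expect that once the position $\bar J$ is correctly located, the remaining verification is a routine, if somewhat intricate, case check.
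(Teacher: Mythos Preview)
Your approach is essentially the same as the paper's: both proofs identify $J$ via the wingtip-symmetry parameter $m$, split into the two cases $D(\mu_B)(\bar m)=1$ versus $D(\mu_B)(\bar m)=0$, and then read off the effect on the $01$-word by treating the first $2n-m$ characters as a type~A line-neighborhood move while the tail of length $m$ is left unchanged.

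One small slip to correct before you fill in the details: the insertion position $\bar J$ is $\overline{m+1}$, not $\bar m$. Indeed, $m$-wingtip symmetry says that for each $i\le m$ exactly one of positions $i,\bar i$ carries a $1$ in $\gamma_B$, so $\{1,\dots,m\}\subset\{|u_B(1)|,\dots,|u_B(k)|\}$; the failure at $i=m+1$ together with the $\mathcal{P}'(k,2n)$ constraint forces $\gamma_B(m+1)=\gamma_B(\overline{m+1})=0$, whence $J=m+1$. In particular, position $\bar J$ is \emph{always} $0$ in $\gamma_B$, so your dichotomy ``position $\bar J$ already holds a $1$ / a $0$'' is not the right reading. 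The actual case split at $\bar m=\bar J+1$ records what lies immediately to the right of the newly inserted $1$: if $D(\mu_B)(\bar m)=1$ you get two consecutive $1$'s (hence the repeated part $\mu_i,\mu_i$), while if $D(\mu_B)(\bar m)=0$ the new $1$ is followed by a $0$ (hence the new part equal to the column index $j$). With this correction your bookkeeping goes through exactly as you describe.
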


\begin{proof}
Let $\mu \in  \mathcal{P}'(k,n)$ be a $m$-wingtip symmetric partition. That is, $m$ is the largest nonnegative integer such that $|\gamma(i)-\gamma(2n-i)|=1$ for $0 \leq i \leq m$. So we have that \[\gamma_B=\gamma(1) \cdots \gamma(m) \gamma(m+1) \gamma(m+2) \cdots \gamma(2n). \] Suppose that $\gamma(\overline{m})=0$. Then $\gamma(\overline{m+1})=0$, otherwise $\mu_B$ is not $m$-wingtip symmetric. So we have that 
\begin{align*}
\gamma_B&= \overbrace{0 \cdots 010 \cdots 01 \cdots \gamma(\overline{m+2})0}^{ \mbox{First $2n-m$ characters} }0 \gamma(\overline{m-1}) \cdots \gamma(\overline{1});\\
\gamma_B^1&= \overbrace{0 \cdots 0\hat{1}0 \cdots 01 \cdots \gamma(\overline{m+2})1}^{ \mbox{First $2n-m$ characters} }0 \gamma(\overline{m-1}) \cdots \gamma(\overline{1});.
\end{align*}
Observe that the first $2n-m$ characters correspond to curve neighborhoods in $\Gr(k,2n-m)$ for some $k$. Thus one shifts the permutation up and deletes a box out of the corresponding rows. Notice that the last rows corresponding to the last $m$ characters of the word will remain the same. The introduction of the character 1 in the $\overline{m+1}$ position forces a row to be of length $j$ (the column before the boundary edge $\gamma(\overline{m})=0$ is in).

Suppose that $\gamma(\overline{m})=1$. Then $\gamma(\overline{m+1})=0$, otherwise $\mu_B$ is not $m$-wingtip symmetric. So we have that 
\begin{align*}
\gamma_B&= \overbrace{0 \cdots 010 \cdots 01 \cdots \gamma(\overline{m+2})0}^{ \mbox{First $2n-m$ characters} }1 \gamma(\overline{m-1}) \cdots \gamma(\overline{1});\\
\gamma_B^1&= \overbrace{0 \cdots 0\hat{1}0 \cdots 01 \cdots \gamma(\overline{m+2})1}^{ \mbox{First $2n-m$ characters} }1 \gamma(\overline{m-1}) \cdots \gamma(\overline{1});.
\end{align*}
Observe that the first $2n-m$ characters correspond to curve neighborhoods in $\Gr(k,2n-m)$ for some $k$. Thus one shifts the permutation up and deletes a box out of the corresponding rows. Notice that the last rows corresponding to the last $m$ characters of the word will remain the same. The introduction of the character 1 in the $\overline{m+1}$ position forces a row to be of length $\mu_i$ (the row that boundary edge $\gamma(\overline{m})=1$ is in).
\end{proof}

Proposition \ref{prop:B} follows.

\section{No conflict of interest statement}
On behalf of all authors, the corresponding author states that there is no conflict of interest.

\bibliographystyle{halpha}
\bibliography{bibliography}

\end{document}